\documentclass[12pt]{article}
\usepackage{amssymb}
\usepackage{amsmath}
\usepackage{amsthm}

\usepackage{graphicx}
\usepackage{bm}

\usepackage{etoc}
\etocsettocdepth{section}
\etoctocstyle{1}{Contents}

\oddsidemargin  0pt
\topmargin      0pt
\textwidth     16cm
\textheight    22cm

\title{
On the theory of Gordan-Noether  \\  on  homogeneous forms with zero Hessian }
\author{
Junzo Watanabe  \\ 
Department of Mathematics, Tokai University 
Hiratsuka 259-1292, Japan \\
Email: watanabe.junzo@tokai-u.jp
}


\newtheorem{theorem}{Theorem}[section]

\newtheorem{corollary}[theorem]{Corollary}
\newtheorem{lemma}[theorem]{Lemma}
\newtheorem{proposition}[theorem]{Proposition}
\theoremstyle{definition}
\newtheorem{definition}[theorem]{Definition}
\newtheorem{notation}[theorem]{Notation}
\theoremstyle{remark}
\newtheorem{example}[theorem]{Example}
\newtheorem{remark}[theorem]{Remark} 

\def\rank{{\rm rank}}
\def\dim{{\rm dim}}
\def\im{{\rm im}}
\def\ker{{\rm ker}}

\def\deg{{\rm deg}}


\def\cB{{\cal B}}
\def\cD{{\cal D}}
\def\cI{{\cal I}}

\def\sol{{\rm Sol}}
\def\bolda{\mbox{\boldmath $a$}}

\def\boldh{\mbox{\boldmath $h$}}
\def\tinyh{\mbox{\scriptsize \boldmath $h$}}
\def\boldf{\mbox{\boldmath $f$}}

\def\boldv{\mbox{\boldmath $v$}}

\def\GL{{\rm GL}}

\def\sl2{\mathfrak{sl}_2}

\def\pa{\partial}

\def\al{\alpha}

\def\om{\omega}

\newcommand{\reallylong}{\hbox{$\hbox to .5in{\rightarrowfill}$}  }

\newcommand {\PP}{\mathbb{P}}


\def\rk{\operatorname{rank}}
\def\krdim{\operatorname{Krull\,dim}}
\def\trdeg{\operatorname{tr.\,deg}}
\def\parder#1#2{\frac{\pa #1}{\pa #2}}

\begin{document}


\def\vs{\vspace{1ex}}

\begin{center}
{\Large  On the theory of Gordan-Noether on homogeneous forms with zero Hessian (improved version)} 
\end{center}

\begin{center}
{Junzo Watanabe}\\ 
{Department of Mathematics}\\ 
{Tokai University} \\  {Hiratsuka, Kanagawa 259-1292, Japan}\\ 
 {Email:watanabe.junzo@tokai-u.jp}\\ 

\vspace{1ex}
\end{center}

\begin{center}
{Michiel de Bondt}\\ 
{Institute for Mathematics, Astrophysics and Particle Physics}\\ 
{Radboud University Nijmegen, The Netherlands}\\ 
 {Email:M.deBondt@math.ru.nl}\\ 

\vspace{1ex}
\end{center}

An earlier version of this paper, namely \cite{watanabe_2},  has been printed in Proceedings of the School of Science of Tokai University, 
Vol.\@ 49, Mar.\@ 2014. 
In this version, a 
serious error has been corrected, namely \cite[Lemma~5.2]{watanabe_2}. Lemma 5.2 has been replaced by a weaker
statement, and Proposition 6.2 has been weakened along with that. Lemma 5.2 no longer suffices
for the proof of Proposition 7.2. For that reason, a new section (Section 8) has been added
to complete the proof of Proposition 7.2. Furthermore, several trivial errors have been
corrected, and a section with new results has been added (Section~9).  

\begin{center}
 {\bf Abstract}\\ 
{We give a detailed  proof for Gordan-Noether's results  in  
``Ueber die algebraischen  Formen, deren Hesse'sche Determinante identisch verschwindet.''
C.\ Lossen has written a paper in a similar direction as the present paper, but did not provide
a proof for every result. In our paper, every result is proved. Furthermore,
our paper is independent of Lossen's paper and 
includes a considerable number of new observations.} 

\vspace{1ex}
\end{center}


\begin{bf}
\tableofcontents
\end{bf}

\section{Introduction}

In 1852 and 1859, O.\  Hesse wrote two papers in Crelle's Journal Bd.\ 42 and Bd.\ 56 in  
which he claimed that if the Hessian determinant of a homogeneous polynomial identically vanishes, 
then a variable can be eliminated by  a linear transformation of the variables. 
Unfortunately his claim is not true in general. 
In fact  Hesse's  proof was unconventional and the validity of the proof was questioned 
from the beginning \cite{GNzzz}.  Nonetheless it should have been  
easy to see that Hesse's claim is true for  binary forms as well as quadrics.    
In 1875, M.\ Pasch proved that Hesse's claim is true for ternary cubics and quaternary 
cubics   \cite{moritzPasch}. 

In 1876, P.\ Gordan and M.\ Noether \cite{GNzzz} finally established the correct 
statement which says that if a form has zero Hessian, then one variable 
can be eliminated from the form itself and its partial derivatives simultaneously
by way of a birational transformation of the variables.  
Moreover they proved in the same paper  that Hesse's claim is true, 
if the number of variables  is at most four, 
and furthermore  they determined  all homogeneous polynomials in  
five variables for which the Hessian determinant identically vanishes.  

The present paper goes beyond the necessity and desire to understand their proof.   
The Hessian of a homogeneous polynomial 
is essential to the theory of Artinian Gorenstein rings because it is used  
  with higher Hessians to determine the set of the strong Lefschetz elements in a 
 zero-dimensional Gorenstein algebra (\cite{tMjW08zzz}).
In particular, if the Hessian of a homogeneous polynomial is identically zero,  
we get a Gorenstein algebra which lacks  the  strong Lefschetz property. 
To explain this further, let $R=K[x_1, \ldots,x_n]$ be  the polynomial ring over a field $K$ 
of characteristic zero  and let 
$G \in R$ be a homogeneous polynomial.  In addition let $I \subset R$ be the ideal:  
\[I=\left\{ f(x_1, \ldots, x_n) \in R \left| 
f\left(\Big(\frac{\pa }{\pa x_1}, \ldots, \frac{\pa }{\pa x_n}\Big)G\right)=0 \right. \right\},   \]
and let $A=R/I$.   
Then $A$ is a zero-dimensional Gorenstein  graded algebra $A= \bigoplus _{i=0}^d A_i$, where 
$d= \deg\;G$.  It is easy to see that if the polynomial $G$ contains properly $n$ variables, then the partial derivatives 
$G_1, \ldots, G_n$ of $G$ are linearly independent.   
Moreover if 
$L=\xi_1x_1+ \cdots + \xi _n x_n \in R_1$ is a linear form, 
it defines a  linear map  
\[\times L ^{d-2} : A_1  \longrightarrow A_{d-1}\]   
by $A_1 \ni a \mapsto aL^{d-2} \in A_{d-1}$.  Let $M$ be the matrix for this linear map 
with respect to  the bases $\langle x_1, \ldots,  x_n\rangle$ and 
$\langle G_1, \ldots,  G_n\rangle$ for $A_1$ and $A_{d-1}$ respectively.  Then  
 $\det M$ is the Hessian of $G$ evaluated at $(\xi _1, \ldots, \xi _n)$ (up to a constant multiple). 
If $\dim A _1 \leq 4$, 
and if we take the results of \cite{GNzzz} for granted, this is to  say that  
there exists  a linear form  $L$  such that $L ^{d-2}$ is bijective.  
If $n=5$, Gordan-Noether's paper enables us to determine all homogeneous polynomials 
$G$ such that  $L ^{d-2}$ is not bijective for any choice of $\xi_1, \ldots, \xi _n$.  

Gordan-Noether's paper \cite{GNzzz} has been cited by several 
authors (\cite{singhess}, \cite{HMNW}, \cite{tMjW08zzz},  \cite{W2zzz});  however each 
time it had to be accompanied with a proviso that the result is yet to be confirmed. 
Gordan-Noether's paper is difficult to understand. 
Not only their results but also the methods  have been completely forgotten.  
Thus, it seems necessary to consider the paper from the viewpoint of contemporary algebra. 
H. Yamada~\cite{HY} devoted considerable efforts to constructing a 
modernized translation of \cite{GNzzz};  
however,  it  was not completely successful and therefore it was unpublished. 

The purpose of this paper is to give detailed proofs for most of the results that were obtained by 
Gordan-Noether in \cite{GNzzz}.  
The foundation of their theory lies in the fact that if the Hessian determinant of a homogeneous  polynomial is identically zero, then 
the polynomial satisfies a certain linear partial differential equation. 
For simplicity we  assume that the Hessian matrix $M:=(\pa ^2 f/\pa x_i \pa x_j)$ has corank one. 
Then the left null space of the matrix $M$ has dimension one over the function field $K(x)$. 
Let $(h_1, \ldots, h_n)$ be a vector with $h_j \in K(x)$ such that 
$(h_1, \ldots, h_n)M=0$.  Then it is easy to see that $f$ and its partial derivatives
satisfy the partial differential equation 
\begin{equation} \label{gordan-noether}
h_1\frac{\pa F}{\pa x_1} + h_2\frac{\pa F}{\pa x_2} + \cdots + h_n\frac{\pa F}{\pa x_n}=0. 
\end{equation} 
By clearing the denominator we may think $h_j$ are homogeneous polynomials of the same 
degree.  (Cf. Remark~\ref{uniqueness_of_g_and_h}.)   
Gordan and Noether discovered  that each coefficient $h_j$ {\it itself} of (\ref{gordan-noether}) 
satisfies the partial differential equation  (\ref{gordan-noether}). This readily proves that 
a variable can be eliminated from $f$ and its partial derivatives simultaneously by a birational 
transformation. (See Theorem~\ref{1st_main_th_of_gordan_noether}.) 
The Gordan-Noether called the functions satisfying  (\ref{gordan-noether}) ``die Functionen $\Phi$.''  
The coefficients of this partial linear differential equation 
were termed as a   
``self-vanishing system'' by Yamada~\cite{HY}.  
The solution of this type of  differential equation behaves 
as if the coefficients were constants.   
According to \cite{GNzzz}, Jacobi considered this type of differential equation and 
it is the key to understanding the Gordan and Noether theory.    

To prove that Hesse's claim is true for homogeneous polynomials for $n \leq 4$, 
it is necessary to consider the fundamental locus and the image of the rational map 
$\PP ^{n-1} \to \PP ^{n-1}$ defined by $ (x_1, \ldots, x_n) \mapsto (h_1, \ldots, h_n)$.   
From the fact that  $h_1, \ldots, h_n$ is a ``self-vanishing system,''  it follows that 
the dimension of the image is at most $n - 3$ if $n \ge 3$. This enables us to determine 
the forms with zero Hessian for $n \leq 4$. For $n = 5$, the dimension of the image
of $h_1, \ldots, h_n$ may be exactly $n - 3 = 2$, which is too large to determine 
the forms with zero Hessian in dimension $5$. But this cannot occur in the context of
forms with zero Hessian. So the dimension is at most $1$ and we can determine 
the forms with zero Hessian for $n = 5$ as well.

Gordan and Noether's idea and proof techniques appear quite new and they are interesting in 
their own right, 
and they give us a series of new problems, some of which will be investigated 
in our subsequent papers. 

\begin{center}
\vskip 0.2cm
{Historical notes}
\end{center}

\vskip 0.2cm
In 1990, H.\ Yamada wrote a paper \cite{HY}, under  Grant-in-Aid no.\@ 20022551 entitled 
``On the hypersurface defined by a form whose Hessian identically vanishes.'' 
It was unpublished because it was incomplete; however,  he defined and systematically studied 
``the self-vanishing system of polynomials,'' as he named them.  
This paper was written to finish Yamada's paper \cite{HY}.   

An earlier version of this paper was written by the first author only without 
a prior knowledge of Lossen's paper \cite{clossen}. The first author 
gave a 3 hour lecture on this subject at the workshop ``Aspects of SLP and WLP'' 
held in Hawaii Tokai International College in Honolulu in September 2012, where  
he learned that Lossen  had written a paper~\cite{clossen} in the same direction and that there were  
other related papers~\cite{ciliberto_russo_francesco_simis} and \cite{garbagnati_repetto}.    
Clearly the objectives  of Lossen's paper and this paper are identical.  However 
the methods employed are different, although both are based on the same 
source, i.e., on the paper of Gordan and Noether \cite{GNzzz}. 

The main differences with Lossen's paper are listed below.  
\begin{enumerate}
\item
Theorem~\ref{1st_main_th_of_gordan_noether}
is not explicitly written in Lossen~\cite{clossen}, which tells us  that Hesse was 
in some sense correct in his intuition, when he said that if the Hessian determinant 
vanishes, then a variable can be eliminated by a linear transformation.    
\item
In this paper, a so-called ``self-vanishing system'' is defined and studied systematically.  
\item
Our observation Proposition~\ref{yamada's_proposition_3_page38}  
considerably simplify the entire argument.
\item
In Lossen's paper~\cite{clossen}, the connection to the Lefschetz properties of Artinian rings 
is not indicated.   

\item 
In this paper it is proved that a cubic form in five variables is essentially unique, 
while in \cite{clossen}, the provided proof is incomplete.
\end{enumerate}

In 2014, the first author published a version of this paper in 
Proceedings of the School of Science of Tokai University. But Lemma 
\ref{rank_of_hessian_matrix_corrected} of that paper contains a serious error. 
The error can however be fixed, and the second author has been
added to the paper to do this. In the current version, Lemma 
\ref{rank_of_hessian_matrix_corrected} has been replaced by a weaker statement. 
Proposition \ref{yamada's_proposition_3_page38} has been replaced along with 
that, and section 8 has been added to complete the proof of Proposition 
\ref{svs_for_forms_with_zero_hessian_in_5_variables} (the new version of 
Lemma \ref{rank_of_hessian_matrix_corrected} does not suffice for this). 
Section 9 is due to the second author as well, and was inspired by the final 
remark (Remark 7.6) in the 2014 version of the paper.

The second author has written some related papers on his own. 
In \cite{qtnsh}, the forms with zero Hessian are determined for $n \leq 4$.
In \cite{hmgqt5}, using results of \cite{singhess}, the forms with zero 
Hessian are determined for $n = 5$. But the proofs diverge from the techniques
in Gordan-Noether's paper \cite{GNzzz} on some points (see also remark 
\ref{linspanrem}). Non-homogeneous polynomials with zero Hessians are 
considered in \cite{singhess} and \cite{qtnsh} as well. 
In \cite{hessmalrank}, all these results are generalized to arbitrary
dimension, with the zero Hessian condition replaced by that the Hessian 
matrix has fixed small rank (one less than the original dimension). 

The first author would like to thank H.\ Nasu and T.\ Tsukioka for insightful discussions 
for Remark~\ref{rem_by_H-Nasu}.  

\section{Notation and preliminaries}
Throughout the paper we denote by $K$ an algebraically closed field of characteristic zero. We 
denote by 
$K[x_1, x_2, \ldots, x_n]$  the polynomial ring in the variables $x_1, \ldots, x_n$ and by $K(x_1, \ldots, x_n)$ 
 the function field.  
Let $R=K[x_1, \ldots, x_n]$. 
   An element of $R$ is sometimes abbreviated as  $f(x)$
or simply as $f$.  

A {\bf system} of homogeneous polynomials (or forms) of $R$ is a vector 
$(f_1, f_2, \ldots, f_n)$ consisting 
of homogeneous polynomials $f_i \in R$ of the same degree.  A system of  forms is denoted by 
a bold face letter as $\boldf=(f_1, \ldots, f_n)$.    
To avoid triviality we will always assume that $\boldf \neq 0$. 
To indicate that a system is a vector of 
polynomials depending on the argument vector $x=(x_1, \ldots, x_n)$, we write 
$\boldf(x)$ as well as $\boldf$. 
Although a coordinate system like  $x=(x_1, \ldots, x_n)$ is a ``system of forms,''  
we do not apply the rule to use  a bold face letter to denote it. 
If $y=(y_1, y_2, \ldots, y_n)$ is a  new set of variables  and if $\boldf(x)$ is a 
system of forms in $R$, then $\boldf(y)$ obtained from $\boldf(x)$ by the 
substitution  $x_j \mapsto y_j$   
is a system of forms in $K[y_1, \ldots, y_n]$. 

We treat vectors both as  row vectors and as  column vectors, 
so if $A=(a_{ij})$ is an $n \times n$ matrix with $a_{ij} \in K$,  and if 
$x=(x_1, \ldots, x_n)$ is a vector, then 
$y=Ax$  means  that 
$y=(y_1, \ldots, y_n)$ is a vector
defined by 
\[y_i = \sum _{j=1}^n a_{ij}x_j.\]
Likewise $y=xA$ means that 
\[y_j = \sum _{i=1}^na_{ij}x_i. \]
The same rule applies to systems of polynomials  as well as coordinate systems. 
Thus, if  $\boldf = (f_1, \ldots, f_n)$ is a system of forms, then 
$\boldf '= A\boldf$ 
is a system of forms for any  $n \times n$ invertible  matrix $A$ over $K$. 

\begin{lemma} \label{basic_001}  
Let $K[x]=K[x_1, \ldots, x_n]$ be the polynomial ring and let $f(x)=f(x_1, \ldots, x_n) \in K[x]$. 
Let $A=(a_{ij}) \in \GL(n,K)$, and put 
\[(x'_1 ,x'_2, \ldots, x'_n)=A(x_1, x_2 , \ldots , x_n )
\]
Consider $(x'_1. \ldots, x'_n)$ as a new coordinate system and  
let $f'$ be the polynomial in $x'_1, \ldots, x'_n$  defined by 
\[f'(x'):=f(A^{-1}x')=f(x).\]
Then 
\[(f'_1, f'_2,  \ldots , f'_n)={}^tA^{-1}(f_1, f_2,  \ldots, f_n),\]
where $f_j = \frac{\pa f}{\pa x_j}$, and $f'_j = \frac{\pa f'}{\pa x'_j}$. 
\end{lemma}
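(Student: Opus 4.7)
The plan is to apply the multivariable chain rule and then repackage the result as a matrix identity. First, I would invert the linear change of coordinates: writing $B = A^{-1} = (b_{ij})$, the relation $x' = Ax$ becomes $x = Bx'$, so $x_i = \sum_{k} b_{ik} x'_k$, and consequently $\partial x_i / \partial x'_j = b_{ij}$.

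Next, I would differentiate the defining identity $f'(x') = f(A^{-1}x')$ with respect to $x'_j$. The chain rule gives
$$f'_j \;=\; \frac{\partial f'}{\partial x'_j} \;=\; \sum_{i=1}^n \frac{\partial f}{\partial x_i}\,\frac{\partial x_i}{\partial x'_j} \;=\; \sum_{i=1}^n b_{ij}\, f_i.$$
Finally, I would read this componentwise relation as a matrix equation. Because summation occurs over the \emph{first} index of $b_{ij}$, the matrix $B$ enters transposed. Under the convention introduced just before the lemma (namely, $y = Mx$ means $y_i = \sum_j m_{ij} x_j$), the identity $f'_j = \sum_i b_{ij} f_i$ reads $(f'_1, \ldots, f'_n) = {}^tB (f_1, \ldots, f_n) = {}^tA^{-1}(f_1, \ldots, f_n)$, which is the desired formula.

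There is no real obstacle here: the lemma is essentially a single chain-rule computation. The only point requiring attention is that the Jacobian of the substitution $x \mapsto A^{-1}x'$ is $A^{-1}$ itself (not $A$), and that summation over the row index of $A^{-1}$ forces the transpose in the final formula. This is exactly the standard fact that gradients transform contragrediently to coordinates.
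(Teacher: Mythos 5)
Your proof is correct: the paper leaves this lemma to the reader, and the chain-rule computation you give, together with the careful matching of the summation index against the paper's convention $y=Mx \Leftrightarrow y_i=\sum_j m_{ij}x_j$, is exactly the intended argument. Nothing is missing.
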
   

Proof is left to the reader. 

\begin{lemma}  \label{num_of_var_involved_1}     
Let $f(x)=f(x_1, \ldots, x_n)$ be a homogeneous polynomial of positive degree. 
Put  $f_i = \frac{\pa f}{\pa x_i}$ and  let 
$\mbox{\boldmath $f$} = (f_1, \ldots, f_n)$ be  a system of polynomials. 
If $\dim _K \; \sum _{i=1} ^n Kf_i=s$, then 
$n-s$ variables can be eliminated from $f$ by means of a linear transformation of the 
variables.  In other words, 
there exists an invertible matrix 
$A=(a_{ij}) \in \GL(n,K)$ such that,  if we let 
$x'=Ax$, then  the polynomial 
$f'(x'_1, \ldots,x'_n )=f(A^{-1}x')$ does not depend on $x'_{s+1}, x'_{s+2}, \ldots, x'_{n}$.  
\end{lemma}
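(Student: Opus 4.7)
The plan is to reduce the problem to linear algebra on the space of relations among $f_1,\ldots,f_n$, and then apply Lemma~\ref{basic_001} so that a suitable basis of the relation space manifests itself as the vanishing of $n-s$ of the transformed partials. Let $V=\sum_i Kf_i \subset R$, so $\dim V = s$, and let
\[
W = \Bigl\{(c_1,\ldots,c_n) \in K^n \;\Big|\; \sum_{i=1}^n c_i f_i = 0 \Bigr\}.
\]
Then $\dim W = n-s$. The first step is to pick a basis $v_{s+1},\ldots,v_n$ of $W$ and extend it by vectors $v_1,\ldots,v_s$ to a basis of $K^n$. Assemble these as the rows of an invertible matrix $B\in\GL(n,K)$, and set $A:= {}^tB^{-1}$, so that ${}^tA^{-1}=B$.

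Next, apply Lemma~\ref{basic_001} to this $A$. With $x'=Ax$ and $f'(x')=f(A^{-1}x')$, the lemma gives
\[
(f'_1,\ldots,f'_n)^t = {}^tA^{-1}(f_1,\ldots,f_n)^t = B(f_1,\ldots,f_n)^t,
\]
so for each $j$ one has $f'_j = \sum_i B_{ji} f_i$, where the row $(B_{j1},\ldots,B_{jn})$ is precisely $v_j$. For $j=s+1,\ldots,n$ this row lies in $W$, so $f'_j = \partial f'/\partial x'_j = 0$ as polynomials in $x'$.

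The final step is to conclude that $f'$ does not actually contain the variables $x'_{s+1},\ldots,x'_n$. Writing $f' = \sum_\alpha c_\alpha (x')^\alpha$, the vanishing of $\partial f'/\partial x'_j$ forces $\alpha_j c_\alpha = 0$ for every monomial; since $K$ has characteristic zero, this gives $c_\alpha = 0$ whenever $\alpha_j>0$, i.e.\ $f'$ is independent of $x'_j$. Doing this for $j=s+1,\ldots,n$ finishes the proof.

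There is essentially no obstacle here beyond bookkeeping: the mild trap is the transpose-inverse appearing in Lemma~\ref{basic_001}, which is why one must set $A={}^tB^{-1}$ (not $A=B$) in order that the chosen relations in $W$ surface as the rows of ${}^tA^{-1}$ and hence kill the corresponding new partial derivatives. Once this is arranged, everything else is immediate from the chain rule and characteristic-zero.
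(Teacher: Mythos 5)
Your proof is correct and follows essentially the same route as the paper: both arguments produce an invertible $B$ whose last $n-s$ rows are relations among the $f_i$, set $A={}^tB^{-1}$ so that Lemma~\ref{basic_001} turns those relations into the vanishing of $\partial f'/\partial x'_j$ for $j>s$, and conclude. You merely make explicit two points the paper leaves to the reader, namely the identification of the relation space $W$ with $\ker(K^n\to\sum Kf_i)$ and the characteristic-zero step showing that a vanishing partial derivative forces independence from that variable.
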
   

\begin{proof}
By assumption there exists an invertible matrix $B$ such that  
$B(f_1, f_2, \ldots, f_n)= (f'_1, f'_2, \ldots, f'_{s}, 0,0, \allowbreak \ldots, 0).$
Let $A$ be the matrix such that ${}^tA^{-1}=B$,  and put $x'=Ax$ and  $f'(x')=f(A^{-1}x')$. 
Then \[\frac{\pa f'}{\pa x'_j}=0 \ \mbox{ for }\ j \geq s+1, \]
 by Lemma~\ref{basic_001}. 
Thus $f'(x')$ does not depend on the variables $x'_{s+1}, x'_{s+2}, \ldots, x'_n$.   
\end{proof}

\begin{proposition} \label{num_of_var_involved_2} 
Let $\mbox{\boldmath $f$} = (f_1, \ldots, f_n)$ be a system of forms in $K[x_1, \ldots, x_n]$, where 
$f_1, \ldots, f_n$ are algebraically dependent.  Let $y=(y_1, \ldots, y_n)$  be a new set of variables 
independent of $x$.  
Let \[\phi : K[y_1, y_2, \ldots, y_n] \rightarrow K[x_1, x_2, \ldots, x_n]\] 
be the homomorphism defined by $y_i \mapsto f_i$, and   
let $g=g(y_1,\ldots, y_n)$ be a nonzero element in $\ker\;  \phi$ of the least degree. 
Put 
\[h_j=h_j(x):=\frac{\pa g}{\pa y_j}(f_1, \ldots, f_n).\]
In words, $h_j(x)$  is the polynomial obtained from $\frac{\pa g}{\pa y_j}$ by substituting 
$(f_1, \ldots, f_n)$ for $(y_1, \ldots, \allowbreak y_n)$.
Let $W=\sum _{j=1}^nKh_j$ be the vector space over $K$ spanned by 
the elements $h_1, \ldots, h_n$.   
Let $s=\dim _K \; W$.  Then 
$n-s$ variables can be eliminated from $g(y)$ by means of a linear 
transformation of the variables $y_1, \ldots, y_n$.   
\end{proposition}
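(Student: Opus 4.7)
The plan is to reduce the proposition to Lemma~\ref{num_of_var_involved_1} applied to $g$, after identifying the vector space $W$ with the $K$-span of the partial derivatives of $g$.

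First I would check that without loss of generality $g$ is homogeneous. Since each $f_i$ has the same degree $d$, the homomorphism $\phi$ is graded if we give $y_i$ weight $d$; hence the ideal $\ker \phi$ is homogeneous in the standard grading on $K[y]$, and the homogeneous components of $g$ again lie in $\ker \phi$. Choosing $g$ of minimal degree therefore allows us to take $g$ homogeneous. Lemma~\ref{num_of_var_involved_1} then says that if $g_j := \partial g/\partial y_j$ and $V := \sum_j K g_j \subset K[y]$ has dimension $t$, then $n-t$ of the variables $y_1, \ldots, y_n$ can be eliminated from $g$ by a linear change of coordinates.

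The heart of the proof is then to show $s = t$. The homomorphism $\phi$ sends $g_j$ to $h_j$, so $\phi(V) = W$ and in particular $s \leq t$. For the reverse inequality, suppose $\sum_{j=1}^n c_j h_j = 0$ with $c_j \in K$; then $\phi\bigl(\sum_j c_j g_j\bigr) = 0$, i.e.\ the homogeneous polynomial $\sum_j c_j g_j$ of degree $\deg g - 1$ lies in $\ker \phi$. By the minimality of $\deg g$ this forces $\sum_j c_j g_j = 0$. Hence the linear relations among $g_1, \ldots, g_n$ coincide with those among $h_1, \ldots, h_n$, so $t = s$.

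Combining these two steps, we can eliminate $n-s$ variables from $g$ by a linear transformation of $y_1, \ldots, y_n$. The only step that requires any real thought is the equality $s = t$, and even this rests entirely on the minimality of the degree of $g$ together with the fact that the $g_j$ are homogeneous of strictly smaller degree; no deeper obstacle is present.
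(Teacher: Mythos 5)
Your proof is correct and follows essentially the same route as the paper: both show that $\phi$ restricted to $V=\sum_j K\,\partial g/\partial y_j$ is injective by the minimality of $\deg g$ (so $\dim V = \dim W = s$) and then invoke Lemma~\ref{num_of_var_involved_1}. Your explicit check that $g$ may be taken homogeneous is a small point the paper leaves implicit, but otherwise the arguments coincide.
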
   
\begin{proof}
Put $V=\sum _{i=1} ^n K\frac{\pa g}{\pa y_i}$.  
We claim that  $\dim _K \; V = s$.  
It is clear that  $\dim _K V \geq s$. 
Consider the restriction  
\[\phi|_V: V \rightarrow K[x_1, x_2, \ldots, x_n]. \]
We see that 
$(\ker \; \phi) \cap V =0$ by minimality of the degree of $g$.  
Thus we have $\dim _K\; V=s$,  since  $\phi | _V$ is injective 
and $\im \, \phi|_V$ is $W$.   By the previous lemma, proof is complete. 
\end{proof}

\begin{remark} \label{non_vanishing_of_boldh}   
In the above Proposition,  it is possible that $h_j=0$ for some $j$.  
It means that $\frac{\pa g}{\pa y_j}=0$.
Hence $\boldh :=(h_1, \ldots, h_n) \neq 0$.  
If we drop the condition that $\deg \; g$ is minimal in  $\cI$, 
we may define $\boldh$ as well, but $\boldh$  can be $0$.   
\end{remark}

\section{Self-vanishing systems of polynomials} \label{self-vanishing-system}
Let  $x=(x_1, \ldots, x_n)$, $y=(y_1, \ldots, y_n)$ be two sets of  indeterminates and let 
$K(x,y)=K(x_1, \ldots, x_n, \allowbreak y_1, \ldots, y_n)$ denote the rational function field. 
We introduce the differential operator 
\[\cD _x(y) : K(x,y) \rightarrow K(x,y),\]
which is defined by 
\[\cD _{x}(y)f(x,y):=\sum _{j=1}^n y_j\frac{\pa f(x,y)}{\pa x_j}\]
for $f(x,y) \in K(x,y)$.   
For a homogeneous polynomial  $f(x)=f(x_1, \ldots, x_n) \in K[x_1, \ldots, x_n]$ and 
for $j \geq 0$,  define $f^{(j)}(x,y)$ to be the 
polynomial in $K[y_1, \ldots, y_n, x_1, \ldots, x_n]$  given by 
\[f^{(j)}(x,y) = \frac{1}{j!}\cD_x(y)^j(f(x)).\] 
It is easy to see that 
\[f^{(j)}(x,x) = \binom{d}{j}f(x), \ \  j=0,1,2,\cdots, d\] 
and
\[f^{(d)}(x,y) =f(y)\] 
 where $d=\deg \;f$.  
 
\begin{proposition}   
Let $A=(a_{ij})$ be an invertible matrix with $a_{ij} \in K$ and let 
\[x'=(x'_1, \ldots, x'_n)=A(x_1, \ldots, x_n),\]
\[y'=(y'_1, \ldots, y'_n)=A(y_1, \ldots, y_n).\]
Let $f'(x')$ be the polynomial in the coordinate $x'$ defined by 
$f'(x')= f(x) \in K[x_1, \ldots, x_n]$. 
Then we have 
\[\cD_{x'}(y')f'(x')=\cD_x(y)f(x).\]
\end{proposition}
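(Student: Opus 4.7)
The plan is to show that the operator identity $\cD_{x'}(y')f'(x') = \cD_x(y) f(x)$ is simply the coordinate-invariance of the directional derivative, obtained by combining the transformation rule for partial derivatives with the fact that $y$ and $x$ transform the same way.

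First I would appeal to Lemma~\ref{basic_001}, which says that if $x' = Ax$ and $f'(x') = f(x)$, then the gradient transforms as
\[
(f'_1, f'_2, \ldots, f'_n) = {}^tA^{-1}(f_1, f_2, \ldots, f_n),
\]
viewing both sides as column vectors, where $f_j = \pa f/\pa x_j$ and $f'_j = \pa f'/\pa x'_j$. Equivalently, $f'_i = \sum_j (A^{-1})_{ji} f_j$. (Alternatively one can derive this directly from the chain rule applied to $f'(x') = f(A^{-1}x')$.)

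Next I would substitute this into the definition of $\cD_{x'}(y')$. Writing $y' = Ay$ componentwise and exchanging the order of summation gives
\[
\cD_{x'}(y')f'(x') = \sum_{i=1}^n y'_i f'_i = \sum_{i,j} y'_i (A^{-1})_{ji} f_j = \sum_{j=1}^n f_j \left( \sum_{i=1}^n (A^{-1})_{ji} y'_i \right).
\]
Since $y' = Ay$ implies $y = A^{-1}y'$, the inner sum is exactly $y_j$. Hence the right-hand side collapses to $\sum_j y_j f_j = \cD_x(y) f(x)$, which is the desired identity.

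There is no genuine obstacle here; the only thing to watch is that $y$ and $x$ transform by the same matrix $A$ while partial derivatives transform by the contragredient ${}^tA^{-1}$, and these two effects cancel precisely because of the pairing between the vector $y$ and the covector of partials. The calculation is routine bookkeeping of indices, and the statement is really the familiar fact that $\sum_j y_j \pa/\pa x_j$ is a well-defined derivation associated to the tangent vector $y$, independent of the choice of linear coordinates.
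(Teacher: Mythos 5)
Your proof is correct and follows essentially the same route as the paper: both invoke Lemma~\ref{basic_001} to get the contragredient transformation of the gradient and then cancel $A$ against ${}^tA^{-1}$ in the pairing $y\cdot\boldf$, the only difference being that you write the computation with explicit indices while the paper uses matrix notation.
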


\begin{proof}
Let $\boldf$ be the system of forms:  
\[ \boldf = \frac{\pa f(x)}{\pa x}: =\left(\frac{\pa f(x)}{\pa x_1},\frac{\pa f(x)}{\pa x_2},\ldots, \frac{\pa f(x)}{\pa x_n}     \right).  \]
Likewise, let $\boldf'$  be the system of forms: 
\[ \boldf' = \frac{\pa f'(x')}{\pa x'} =\left(\frac{\pa f'(x')}{\pa x'_1},\frac{\pa f'(x')}{\pa x'_2},\ldots, \frac{\pa f'(x')}{\pa x'_n}     \right).  \]
Then, by definition, 
\[ \cD_x(y)f(x)= y\cdot \boldf = y_1\frac{\pa f}{\pa x_1} + \cdots + y_n\frac{\pa f}{\pa x_n}.\]
By  Lemma~\ref{basic_001} we have 
\begin{eqnarray*}
\cD_x(y)f(x) = y \cdot \boldf   &  =  &  \left(A^{-1}y'\right)   \cdot   \left({}^tA  \boldf '\right)  \\
                                  &  =  &  \big( y' ({}^tA^{-1}) \big)   \cdot \big({}^t A \boldf '\big)  \\
                                  &  =  &    y' \cdot  \boldf '   = \cD_{x'}(y')f'(x'). 
\end{eqnarray*}
\end{proof}

\begin{proposition}  \label{taylor_expansion}  
For $f(x) \in K[x]$, we have  
\[f(x+ty)= \sum _{j=0}^{\infty}t^jf^{(j)}(x,y),\]
for an indeterminate  $t$.  
\end{proposition}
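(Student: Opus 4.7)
The plan is to set $\phi(t) := f(x+ty)$, regard it as a polynomial in $t$ (of degree at most $d = \deg f$, since $f$ is a polynomial of degree $d$ in the components of $x+ty$), and evaluate its Taylor series at $t=0$. The statement will then amount to the assertion that $\phi^{(j)}(0) = j!\, f^{(j)}(x,y)$, which in view of the definition $f^{(j)}(x,y) = \tfrac{1}{j!}\,\cD_x(y)^j f(x)$ reduces to showing $\phi^{(j)}(0) = \cD_x(y)^j f(x)$.

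First I would set $F_j(x) := \cD_x(y)^j f(x)$ (with $y$ treated as a parameter), so that $F_0 = f$ and $F_{j+1} = \cD_x(y) F_j$. I claim, by induction on $j$, that
\[
\phi^{(j)}(t) \;=\; F_j(x+ty).
\]
The base case $j=0$ is the definition of $\phi$. For the inductive step, apply the ordinary chain rule in the single variable $t$:
\[
\frac{d}{dt} F_j(x+ty) \;=\; \sum_{k=1}^n y_k\,\frac{\partial F_j}{\partial x_k}(x+ty) \;=\; (\cD_x(y) F_j)(x+ty) \;=\; F_{j+1}(x+ty).
\]
Setting $t=0$ yields $\phi^{(j)}(0) = F_j(x) = j!\, f^{(j)}(x,y)$.

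Finally, since $\phi(t)$ is a polynomial in $t$ of degree at most $d$, the single-variable Taylor expansion about $t=0$ gives
\[
\phi(t) \;=\; \sum_{j=0}^{d} \frac{\phi^{(j)}(0)}{j!}\,t^j \;=\; \sum_{j=0}^{d} t^j f^{(j)}(x,y),
\]
and the terms for $j>d$ vanish because $\cD_x(y)^j$ lowers total degree in $x$ by $j$, so the sum may be written as $\sum_{j=0}^{\infty} t^j f^{(j)}(x,y)$, as required.

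There is no real obstacle here; the only subtlety is being careful that $\cD_x(y)$ means differentiation in the $x$-variables (with the $y_k$ playing the role of constants) so that the inductive step genuinely produces $F_{j+1}$ rather than some mixed operator. Once this bookkeeping is fixed, the proof reduces to the familiar one-variable Taylor theorem applied to the restriction of $f$ to the line $x + ty$.
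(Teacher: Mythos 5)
Your proof is correct and is essentially the paper's own argument: the paper simply states that the identity is ``a direct consequence of the Taylor expansion,'' and your write-up supplies exactly the details behind that remark (the induction showing $\phi^{(j)}(t) = \cD_x(y)^j f$ evaluated at $x+ty$, followed by the one-variable Taylor formula). Nothing further is needed.
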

\begin{proof}
This is a direct consequence of the Taylor expansion.  
\end{proof}

\begin{notation}  \label{notation_of_svs}      
Let  $\boldh=(h_1(x), \ldots, h_n(x))$ be a system of homogeneous polynomials
$h_i \in K[x_1, \ldots, x_n]$.   
We define the differential operator $\cD_x(\boldh): K(x) \rightarrow K(x)$ associated to $\boldh$ by 
\[\cD _x (\boldh)f(x) = \sum _{j=1}^{n} h_j(x)\frac{\pa f(x)}{\pa x_j}=f^{(1)}(x,h(x)).\] 
Furthermore we denote by 
\[\sol(\boldh; R)\]
the set of solutions in $R \subseteq K[x]$ of the differential equation 
\[\cD_x(\boldh) f(x)=0.\]
Namely, 
\[\sol(\boldh; R) = \left\{ f(x) \in R \ \left|\   \sum _{j=1}^n h_j(x) 
\frac{\pa f(x)}{\pa x_j}=0 \right. \right\}.\]
Note that $\sol(\boldh;K[x])$ is a graded  subalgebra  of $K[x]$.
\end{notation}


\begin{definition}  \label{self_vanishing_sys}    
A system  $\boldh=(h_1(x), \ldots, h_n(x))$  of polynomials is called self-vanishing, if  $h_j(x) \in \sol(\boldh; K[x])$ for 
all $j=1,2, \ldots, n$.  In addition to it, if ${\rm GCD}(h_1, \ldots, h_n)=1$, 
we will say that $\boldh$ is a reduced self-vanishing system.
\end{definition}

\begin{example}    
A constant vector  $\boldh=(c_1, c_2,  \ldots, c_n) \in K^n$ is  obviously a self-vanishing system. 
\end{example}


\begin{example}  \label{splitting_case}   
Let $h_j \in K[x]$ be homogeneous polynomials (of the same degree).  
Suppose that $\boldh = (h_1, \ldots, h_n)$ satisfy the following conditions. 
\begin{enumerate}
\item
$h_1 = \cdots = h_r = 0, \mbox{ for some integer } r;   \ 1 \leq  r < n. $
\item
The polynomials $h_{r+1}, \ldots,  h_{n}$ do not involve the 
variables $x_{r+1}, \ldots, x_n$. 
\end{enumerate}
Then  $\boldh$ is a self-vanishing system of forms.    
\end{example}


\begin{definition}  
Let $\boldf=(f_1, \ldots, f_n)$ be a system of forms in $K[x]$.  
We denote by   ${\pa \boldf}/{\pa x_j}$ the 
system of forms: 
\[\frac{\pa \boldf}{\pa x_j}=\frac{\pa}{\pa x_j}\boldf=\left(\frac{\pa f_1}{\pa x_j}, \frac{\pa f_2}{\pa x_j}, \ldots , \frac{\pa f_n}{\pa x_j}\right).\]
This should not be confused with the notation already used:  
\[\frac{\pa f(x)}{\pa x}:=\left(\frac{\pa f}{\pa x_1}, \frac{\pa f}{\pa x_2}, \ldots, \frac{\pa f}{\pa x_n}\right).\]
\end{definition}

\begin{proposition} \label{system_associated_to_alg_dep_system_is_svs} 
Let  $\boldf:=(f_1, f_2, \ldots, f_n) $ be  a system of forms in $K[x]$, in which the components are 
algebraically dependent. 
Let $y=(y_1, \ldots, y_n)$ be a coordinate system  algebraically independent of $x$. 
Let $\phi : K[y] \rightarrow K[x]$ be the homomorphism defined by 
\[y_j \mapsto f_j, (j=1, 2, \ldots, n).\] 
Let $g=g(y) \in \ker \, \phi$ be a non-zero homogeneous polynomial of the least degree 
in $\ker\, \phi$.    
As in Proposition~\ref{num_of_var_involved_2},  define $h_j \in K[x]$ by  
\[h_j=\frac{\pa g}{\pa y_j}(f_1, \ldots, f_n).\]
Let $\boldh=(h_1, \ldots, h_n)$.  
Then 
\begin{enumerate}
\item[{\rm (a)}] 
$\boldh$ is a syzygy of $\boldf$.   
\item[{\rm (b)}]
$\boldh$ is a syzygy  of $\pa \boldf/\pa x_j$ for every $j=1,2, \ldots, n$.   
\end{enumerate}
\end{proposition}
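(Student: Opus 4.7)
The plan is to recognize both (a) and (b) as immediate consequences of two calculus identities applied to the defining relation $g(f_1,\ldots,f_n)=0$ in $K[x]$: Euler's identity for (a), and the chain rule for (b). No additional theory — in particular, no properties of self-vanishing systems or the minimality of $\deg g$ beyond what is needed to ensure $g$ is homogeneous — needs to be invoked.

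For (a), the first step is to apply Euler's identity to the homogeneous polynomial $g$ of degree $e:=\deg g$:
\[ \sum_{j=1}^n y_j \frac{\pa g}{\pa y_j}(y) \;=\; e\, g(y). \]
I would then apply the homomorphism $\phi$, i.e., substitute $y_j \mapsto f_j$, to both sides. By the very definition of $h_j$, the left-hand side becomes $\sum_j f_j h_j$, while the right-hand side becomes $e\,g(f_1,\ldots,f_n)=0$ because $g \in \ker\phi$. This yields $\sum_{j=1}^n f_j h_j = 0$, which is precisely the assertion that $\boldh$ is a syzygy of $\boldf$.

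For (b), I would fix an index $j$ and differentiate the identity $g(f_1(x),\ldots,f_n(x))=0$ with respect to $x_j$ using the chain rule:
\[ 0 \;=\; \frac{\pa}{\pa x_j}\, g\bigl(f_1(x),\ldots,f_n(x)\bigr) \;=\; \sum_{i=1}^n \frac{\pa g}{\pa y_i}(\boldf)\cdot \frac{\pa f_i}{\pa x_j} \;=\; \sum_{i=1}^n h_i \frac{\pa f_i}{\pa x_j}. \]
This is exactly the statement that $\boldh$ is a syzygy of $\pa \boldf/\pa x_j$, and since $j$ was arbitrary, part (b) follows.

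The only point requiring care is that Euler's identity demands $g$ be homogeneous, which is built into the hypothesis that $g$ is chosen as a homogeneous polynomial of least degree in $\ker\phi$; without homogeneity one would have to first decompose $g$ into homogeneous components. Beyond this, there is essentially no obstacle — both parts are formal consequences of the single identity $g(\boldf)=0$ combined with elementary differential calculus, and the homomorphism $\phi$ plays only a bookkeeping role, ensuring that the expressions $\frac{\pa g}{\pa y_i}(\boldf)$ are consistently interpreted as polynomials in $K[x]$.
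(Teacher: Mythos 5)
Your proposal is correct and follows exactly the paper's own argument: Euler's identity applied to the homogeneous relation $g$ followed by the substitution $y_j\mapsto f_j$ for (a), and the chain rule applied to $g(f_1,\ldots,f_n)=0$ for (b). Nothing is missing; the minimality of $\deg g$ is indeed not used here.
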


\begin{proof}
Since $g=g(y)$ is homogeneous, we have 
\[\frac{\pa g}{\pa y_1}y_1 + \frac{\pa g}{\pa y_2}y_2 + \cdots +\frac{\pa g}{\pa y_n}y_n = (\deg \; g)g. \]
Make the substitution $y_i \mapsto f_i$. 
Then we have 
\[h_1f_1 + h_2f_2 + \cdots +h_nf_n=0.\]
This shows the first assertion.  
By definition of $g=g(y)$, we have $g(f_1, f_2, \ldots, f_n)=0$. 
Apply the operator $\frac{\pa}{\pa x_j}$ to the this equality. Then we have 
\[0=\frac{\pa g(f_1, f_2, \ldots , f_n)}{\pa x_j}= 
\sum _{k=1}^n \frac{\pa g}{\pa y_k}(f_1, \ldots, f_n)\frac{\pa f_k}{\pa x_j} 
= \sum _{k=1}^nh_k\frac{\pa f_k}{\pa x_j}.  \]
This shows the second assertion. 
\end{proof}

\begin{theorem}  \label{duality}   
Let $f=f(x) \in K[x]$ be a homogeneous polynomial and put $f_j=\frac{\pa f}{\pa x_j}$.  
Assume that $f_1, \ldots , f_n$  are algebraically dependent. 
Let $\boldf=(f_1, \dots, f_n)$ and let 
$\boldh=(h_1, \ldots, h_n)$ be a system of forms as defined in 
Proposition~\ref{system_associated_to_alg_dep_system_is_svs} for $\boldf$.  
Then  
\begin{enumerate}
\item[{\rm (a)}] 
$f(x) \in \sol(\boldh; K[x])$.
\item[{\rm (b)}]
$f_j(x) \in \sol(\boldh;  K[x])$ for $j=1,2, \ldots, n$.
\item[{\rm (c)}]
$f(x) \in \sol(\pa \boldh/ \pa x_j; K[x])$ for $j=1,2, \ldots, n$. 
\item[{\rm (d)}]
$\boldh$ is a self-vanishing system of forms.
\end{enumerate}

\end{theorem}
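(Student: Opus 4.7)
The proposal is to derive all four assertions from Proposition~\ref{system_associated_to_alg_dep_system_is_svs}, using nothing more than the symmetry of second partials $\partial f_j/\partial x_k = \partial f_k/\partial x_j$ and the chain rule applied to $h_j = (\partial g/\partial y_j)(f_1,\ldots,f_n)$.

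For part (a), I would simply note that $\partial f/\partial x_j = f_j$, so that the condition $f \in \sol(\boldh;K[x])$ unwinds to the equation $\sum_j h_j f_j = 0$, which is assertion (a) of the preceding proposition. For part (b), the condition $f_j \in \sol(\boldh;K[x])$ reads $\sum_k h_k \partial f_j/\partial x_k = 0$; after swapping the mixed partials to $\partial f_k/\partial x_j$, this becomes the syzygy relation $\sum_k h_k (\partial f_k/\partial x_j) = 0$, which is assertion (b) of the preceding proposition.

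For part (c), the plan is to start from the identity $h_1 f_1 + \cdots + h_n f_n = 0$ provided by part (a) and differentiate it with respect to $x_j$. By the Leibniz rule this gives
\[
\sum_{k=1}^n \frac{\pa h_k}{\pa x_j} f_k \;+\; \sum_{k=1}^n h_k \frac{\pa f_k}{\pa x_j} \;=\; 0.
\]
The second sum is zero by assertion (b) of the preceding proposition, so the first sum vanishes; this is precisely $f \in \sol(\pa\boldh/\pa x_j; K[x])$.

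Part (d) is the heart of the matter, and I expect it to be the main (though still routine) obstacle. The strategy is to use the chain-rule expression
\[
\frac{\pa h_j}{\pa x_k} \;=\; \sum_{l=1}^n \frac{\pa^2 g}{\pa y_j\,\pa y_l}(f_1,\ldots,f_n)\,\frac{\pa f_l}{\pa x_k},
\]
multiply by $h_k$ and sum over $k$, then interchange the order of summation to obtain
\[
\sum_{k=1}^n h_k\frac{\pa h_j}{\pa x_k} \;=\; \sum_{l=1}^n \frac{\pa^2 g}{\pa y_j\,\pa y_l}(f_1,\ldots,f_n)\,\Bigl(\sum_{k=1}^n h_k\frac{\pa f_l}{\pa x_k}\Bigr).
\]
Using $\pa f_l/\pa x_k = \pa f_k/\pa x_l$, the inner sum equals $\sum_k h_k\,\pa f_k/\pa x_l$, which vanishes by assertion (b) of the preceding proposition applied with $j=l$. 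Hence every term vanishes, $h_j \in \sol(\boldh;K[x])$ for all $j$, and $\boldh$ is self-vanishing. (As a consistency check, one may observe that (d) implies (a) and (b) automatically via the minimality of $g$, since the $h_j$'s already appear as values of $\pa g/\pa y_j$ at $\boldf$.)
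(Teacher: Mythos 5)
Your argument is correct, and parts (a), (b), (c) coincide with the paper's proof step for step: (a) is Euler's relation $\sum_k h_k f_k=0$ read as $\cD_x(\boldh)f=0$; (b) is the syzygy $\sum_k h_k\,\pa f_k/\pa x_j=0$ with the mixed partials swapped; (c) is obtained by differentiating $\boldh\cdot\boldf=0$ and cancelling the term $\boldh\cdot\pa\boldf/\pa x_j$. The only genuine divergence is in (d). The paper argues structurally: $\sol(\boldh;K[x])$ is a subring of $K[x]$ (the operator $\cD_x(\boldh)$ is a derivation), by (b) it contains each $f_i$, hence it contains all of $K[f_1,\ldots,f_n]$, and in particular the elements $h_j=(\pa g/\pa y_j)(f_1,\ldots,f_n)$. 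Your version unwinds exactly this via the chain rule, writing $\pa h_j/\pa x_k=\sum_l(\pa^2 g/\pa y_j\pa y_l)(\boldf)\,\pa f_l/\pa x_k$, interchanging sums, and killing the inner sum $\sum_k h_k\,\pa f_k/\pa x_l$ by Proposition~\ref{system_associated_to_alg_dep_system_is_svs}~(b); this is valid and the computation checks out. The paper's route is shorter, avoids second derivatives of $g$, and applies verbatim to any element of $K[f_1,\ldots,f_n]$; yours makes the mechanism explicit and shows directly which identity is being used where. One caveat: your closing parenthetical claim that (d) implies (a) and (b) ``automatically via the minimality of $g$'' is not justified as stated --- self-vanishing of $\boldh$ is a statement about the $h_j$ only and says nothing a priori about $f$ or the $f_j$ lying in $\sol(\boldh;K[x])$ --- but since you offer it only as a consistency check it does not affect the proof.
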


\begin{proof}
The assertion (a) follows immediately from 
Proposition~\ref{system_associated_to_alg_dep_system_is_svs} (a).   
Proposition~\ref{system_associated_to_alg_dep_system_is_svs} (b) says that  
$\boldh$ is a syzygy of $\pa \boldf/ \pa x_j$.  This means that 
\[\sum _{k=1} ^{n} h_k \frac{\pa f_k}{\pa x_j}=0.\]
But $f_k=\frac{\pa f}{\pa x_k}$.  
Hence we have 
\[\sum _{k=1} ^{n} h_k \frac{\pa f_k}{\pa x_j}      = \sum _{k=1} ^{n} h_k \frac{\pa f_j}{\pa x_k}   =0.\]
This shows  assertion (b).  

Again by Proposition~\ref{system_associated_to_alg_dep_system_is_svs} (a) we have $\boldh \cdot \boldf=0$.  
For each $j$, we have   
\[  \frac{\pa}{\pa x_j}(\boldh\cdot \boldf)= \frac{\pa \boldh}{\pa x_j} \cdot \boldf + \boldh \cdot \frac{\pa \boldf}{\pa x_j}=0.\]
Again by Proposition~\ref{system_associated_to_alg_dep_system_is_svs} (b) we have $\boldh \cdot {\pa \boldf}/{\pa x_j}=0$.  
Hence ${\pa \boldh}/{\pa x_j} \cdot \boldf =0$.  This shows that $f(x) \in \sol(\frac{\pa}{\pa x_j}\boldh; K[x])$. 
Thus (c) is proved. 

Since $\sol(\boldh; K[x])$ is a commutative ring, we have 
\[K[f_1, \ldots, f_n]  \subset \sol(\boldh; K[x]). \]
Since $h_j(x)$ are polynomials in $f_1, \ldots, f_n$, 
this shows that $h_j \in \sol(\boldh; K[x])$. Thus  (d) is proved.  
\end{proof}

\begin{proposition}  \label{svs_behaves_as_variables}   
Let $K[x]=K[x_1, \ldots, x_n]$ be the polynomial ring and 
let  $\boldh$  be  a self-vanishing system of forms in $K[x]$.  
Then, for any $f(x) \in K[x]$, we have 
\[\cD _x(\boldh)\left(     f^{(i)}(x, \boldh)   \right) = (i+1)f^{(i+1)}(x, \boldh). \]
\end{proposition}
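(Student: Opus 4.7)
The plan is to unravel the two-argument nature of $f^{(i)}(x,y)$, apply the chain rule to the substitution $y = \boldh(x)$, and see the self-vanishing hypothesis kill the unwanted terms.

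First I would fix notation: let $F(x,y) := f^{(i)}(x,y) \in K[x,y]$ and $g(x) := F(x,\boldh(x)) = f^{(i)}(x,\boldh)$. By the chain rule,
\[
\frac{\partial g}{\partial x_j}(x) = \left.\frac{\partial F}{\partial x_j}\right|_{y=\boldh} + \sum_{k=1}^n \left.\frac{\partial F}{\partial y_k}\right|_{y=\boldh} \cdot \frac{\partial h_k}{\partial x_j}.
\]
Multiplying by $h_j(x)$ and summing over $j$ gives
\[
\cD_x(\boldh) g = \sum_{j} h_j \left.\frac{\partial F}{\partial x_j}\right|_{y=\boldh} + \sum_{k} \left.\frac{\partial F}{\partial y_k}\right|_{y=\boldh} \left( \sum_j h_j \frac{\partial h_k}{\partial x_j}\right).
\]

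Next I would identify the first sum. By definition of $\cD_x(y)$ (acting only in the $x$-variables with $y$ treated as parameters),
\[
\sum_j h_j \left.\frac{\partial F}{\partial x_j}\right|_{y=\boldh} = \bigl(\cD_x(y) F\bigr)(x,y)\Bigr|_{y=\boldh}.
\]
But $F = f^{(i)} = \frac{1}{i!}\cD_x(y)^i f(x)$, so $\cD_x(y) F = \frac{1}{i!}\cD_x(y)^{i+1} f(x) = (i+1)\, f^{(i+1)}(x,y)$. Evaluating at $y=\boldh$ yields $(i+1)\, f^{(i+1)}(x,\boldh)$, which is the desired expression.

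Finally I would dispose of the second sum. The inner parenthesis is exactly $\cD_x(\boldh) h_k$, and by the hypothesis that $\boldh$ is a self-vanishing system, $h_k \in \sol(\boldh;K[x])$ for every $k$, so each $\cD_x(\boldh) h_k = 0$. Hence the entire second sum vanishes and we obtain $\cD_x(\boldh)\bigl(f^{(i)}(x,\boldh)\bigr) = (i+1)\, f^{(i+1)}(x,\boldh)$, as required. There is no genuine obstacle here; the only care needed is to keep straight the distinction between partial derivatives taken in the $x$-slot of $F(x,y)$ with $y$ frozen and the total derivative of $g(x) = F(x,\boldh(x))$, and to recognize that self-vanishing is precisely what makes the substitution $y \mapsto \boldh(x)$ behave as if $\boldh$ were a constant vector for the purposes of the operator $\cD_x(\boldh)$.
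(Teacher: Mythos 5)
Your proposal is correct and follows essentially the same route as the paper: the chain-rule decomposition of $\cD_x(\boldh)\bigl(f^{(i)}(x,\boldh)\bigr)$ into an $x$-slot term and a $y$-slot term, the identification $\cD_x(y)f^{(i)} = (i+1)f^{(i+1)}$, and the use of $\cD_x(\boldh)h_k=0$ to kill the second term are exactly the paper's equations and reasoning. Your write-up is in fact slightly more explicit than the paper's, which leaves the final identification of the first summand implicit.
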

\begin{proof}
Let $y=(y_1, y_2, \ldots y_n)$ be a coordinate system independent of $x$. 
Note that 
\[
\frac{\pa f^{(i)}(x,\boldh)}{\pa x_j} =\left.\left\{\frac{\pa f^{(i)}(x,y)}{\pa x_j} \right\}\right|_{y \mapsto \tinyh} + 
\left.\left\{\sum _{k=1}^n  \frac{\pa f^{(i)}(x,y)}{\pa y_k}\frac{\pa h_k}{\pa x_j}   \right\}\right|_{y \mapsto \tinyh}, 
\]
so $\cD_x(\boldh)f^{(i)}(x, \boldh)$ is equal to
\begin{equation} \label{long_formula_2}
\sum _{j=1}^nh_j(x)\left.\left\{\frac{\pa f^{(i)}(x,y)}{\pa x_j} \right\}\right|_{y \mapsto \tinyh} 
+ \sum _{j=1}^n h_j(x)\left.\left\{\sum _{k=1}^n  \frac{\pa f^{(i)}(x,y)}{\pa y_k}\frac{\pa h_k(x)}{\pa x_j}   \right\}\right|_{y \mapsto \tinyh}, 
\end{equation}
where $y \mapsto \boldh$ means substitution.
Since $\cD_x(\boldh)h_k(x)=\sum _{j=1}^n h_j(x)\frac{\pa h_k(x)}{\pa x_j}=0$ for every $k=1,2,  \ldots, n$, 
the second summand of formula \eqref{long_formula_2} vanishes. The first summand of 
formula \eqref{long_formula_2} is equal to
$$\big\{\sum_{j=1}^n y_j \frac{\pa f^{(i)}(x,y)}{\pa x_j}\big\}\big|_{y \mapsto \tinyh}
= \big\{\cD_x(y)f^{(i)}(x,y) \big\} \big|_{y \mapsto \tinyh},$$ 
so
\[
\cD_x(\boldh)f^{(i)}(x, \boldh) = \left.\left\{\cD_x(y)f^{(i)}(x,y) \right\} \right|_{y \mapsto \tinyh}
= (i+1) f^{(i+1)}(x,\boldh)
\]
by definition of $f^{(j)}(x,y)$.
\end{proof}

\begin{theorem} \label{mainthm_1}   
Suppose that $\boldh=(h_1, \ldots, h_n)$ is a self-vanishing system of forms  in $K[x]$. 
Then, for a homogeneous polynomial $f(x) \in K[x]$, the following conditions are equivalent:
\begin{enumerate}
\item[$(a)$]
$f(x) \in \sol(\boldh; K[x])$.
\item[$(b)$]
$f^{(j)}(x, \boldh) =0, \ \mbox{ for } j=1,2, \ldots $
\item[$(c)$]
$f(x+t\boldh(x))=f(x) \ \mbox{for any } t \in K'$, where $K'$ is any extension field of $K$. 
\end{enumerate}
\end{theorem}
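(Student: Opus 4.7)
The plan is to prove the chain of implications (a)$\Rightarrow$(b)$\Rightarrow$(c)$\Rightarrow$(a), exploiting the previous proposition together with the Taylor expansion.

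First, observe that by definition of $\cD_x(\boldh)$ in Notation~\ref{notation_of_svs}, one has $f^{(1)}(x,\boldh) = \cD_x(\boldh) f(x)$. This already gives (b)$\Rightarrow$(a) for free, since (b) in particular asserts $f^{(1)}(x,\boldh)=0$.

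For (a)$\Rightarrow$(b), I would use Proposition~\ref{svs_behaves_as_variables} (the one that says $\cD_x(\boldh)(f^{(i)}(x,\boldh)) = (i+1)f^{(i+1)}(x,\boldh)$) and induct on $j$. The base case $j=1$ is the observation above. For the inductive step, assuming $f^{(j)}(x,\boldh)=0$, applying $\cD_x(\boldh)$ gives $(j+1)f^{(j+1)}(x,\boldh)=0$, and since the characteristic is zero we conclude $f^{(j+1)}(x,\boldh)=0$. Thus all higher $f^{(j)}(x,\boldh)$ vanish.

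For (b)$\Leftrightarrow$(c), the bridge is Proposition~\ref{taylor_expansion}. Applying it in the ring $K[x,y]$ and then substituting $y \mapsto \boldh(x)$ yields
\[
f(x + t\boldh(x)) \;=\; \sum_{j=0}^{\infty} t^j f^{(j)}(x,\boldh),
\]
where the sum is finite because $f$ has finite degree. Noting $f^{(0)}(x,\boldh)=f(x)$, condition (c) is equivalent to
\[
\sum_{j=1}^{\infty} t^j f^{(j)}(x,\boldh) = 0 \quad \text{for all } t \in K'.
\]
Since $K$ is algebraically closed of characteristic zero (hence $K'$ infinite), a polynomial in $t$ with coefficients in $K[x]$ that vanishes identically on $K'$ must have all coefficients zero. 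This gives (c)$\Rightarrow$(b); the converse (b)$\Rightarrow$(c) is immediate from the displayed Taylor expansion.

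No step is particularly hard; the only subtlety is making sure that substituting $y\mapsto \boldh(x)$ into the Taylor identity is legitimate (it is, because it is an equality of polynomials in $t$ with coefficients in $K[x,y]$), and that the characteristic-zero hypothesis is invoked both to divide by $j+1$ in the induction and to pass from pointwise vanishing in $t$ to coefficient-wise vanishing.
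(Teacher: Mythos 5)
Your proposal is correct and follows essentially the same route as the paper: the equivalence (a)$\Leftrightarrow$(b) via Proposition~\ref{svs_behaves_as_variables} (with the $j=1$ case being the definition of $\sol(\boldh;K[x])$), and (b)$\Leftrightarrow$(c) via the Taylor expansion of Proposition~\ref{taylor_expansion}. You merely spell out the induction on $j$ and the coefficient-wise vanishing argument that the paper leaves implicit.
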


\begin{proof}
$(a) \Leftrightarrow (b)$ By Proposition~\ref{svs_behaves_as_variables}, it is enough to show the case for $j=1$.  
\begin{eqnarray*}
 f^{(1)}(x, \boldh) & =  & \left.\left\{\cD_x(y)f(x)\right\}\right|_{y \mapsto \tinyh} \\
                   & = &  \cD_x(\boldh)f(x)  \\ 
                   & = &  h_1\frac{\pa f}{\pa x_1}+ h_2\frac{\pa f}{\pa x_2}+ \cdots + h_n\frac{\pa f}{\pa x_n} =0. 
\end{eqnarray*}
The equivalence of $(b)$ and $(c)$ follows immediately from  Proposition~\ref{taylor_expansion}. 
\end{proof}

\begin{corollary}  \label{any_factor_is_in_sol}  
Let $\boldh$ be a self-vanishing system in $K[x]$.  If $f(x), g(x) \in K[x]\setminus \{0\}$, 
and if $f(x)g(x) \in \sol(\boldh; K[x])$, then both  $f(x), g(x) \in \sol(\boldh; K[x])$. 
\end{corollary}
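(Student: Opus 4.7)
The plan is to reduce to the translation characterization of the solution space given in Theorem~\ref{mainthm_1}~(c), and then exploit integrality of $K[x][t]$ as a polynomial ring over a domain.

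First I would apply the implication $(a)\Rightarrow(c)$ of Theorem~\ref{mainthm_1} to the product $fg$, taking the extension field $K'$ to be the rational function field $K(t)$ in a fresh indeterminate $t$. This gives the identity
\[
f\bigl(x+t\boldh(x)\bigr)\,g\bigl(x+t\boldh(x)\bigr) \;=\; f(x)g(x)
\]
inside $K[x][t]$. Set
\[
F(x,t):=f\bigl(x+t\boldh(x)\bigr), \qquad G(x,t):=g\bigl(x+t\boldh(x)\bigr),
\]
which, by Proposition~\ref{taylor_expansion}, are polynomials in $t$ with coefficients in $K[x]$, of $t$-degrees at most $\deg f$ and $\deg g$ respectively. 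Specializing $t\mapsto 0$ gives $F(x,0)=f(x)\neq 0$ and $G(x,0)=g(x)\neq 0$, so neither $F$ nor $G$ vanishes in $K[x][t]$.

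Next I would invoke the fact that $K[x][t]$ is an integral domain, so that the $t$-degree is additive on nonzero products. Since $FG=f(x)g(x)$ has $t$-degree $0$, we must have $\deg_t F=\deg_t G=0$. Thus $F$ and $G$ are independent of $t$, i.e.
\[
f\bigl(x+t\boldh(x)\bigr)=f(x), \qquad g\bigl(x+t\boldh(x)\bigr)=g(x),
\]
as identities in $K[x][t]$, hence also for every $t$ in any extension field of $K$. Applying $(c)\Rightarrow(a)$ of Theorem~\ref{mainthm_1} to $f$ and to $g$ separately yields $f,g\in\sol(\boldh;K[x])$.

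There is no real obstacle here beyond noticing that condition~(c) is the right thing to use: the naive attempt via the Leibniz rule $\cD_x(\boldh)(fg)=(\cD_x(\boldh)f)\,g+f\,(\cD_x(\boldh)g)=0$ does not immediately separate the two factors, and indeed the conclusion fails in general for arbitrary derivations. The self-vanishing hypothesis enters precisely through Theorem~\ref{mainthm_1}, which upgrades the first-order annihilation condition to the fully finite translation invariance $f(x+t\boldh(x))=f(x)$; once the statement is recast multiplicatively as a factorization in the domain $K[x][t]$, degree additivity finishes the argument.
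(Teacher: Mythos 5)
Your proof is correct and is essentially the paper's own argument: the paper expands $f(x+t\boldh)g(x+t\boldh)$ via Proposition~\ref{taylor_expansion} and compares extreme Taylor coefficients in $t$, which is precisely your $t$-degree additivity in the domain $K[x][t]$ followed by Theorem~\ref{mainthm_1}. Your phrasing is in fact a slightly cleaner rendering of the same idea (the paper's ``lowest degree $k_0$ for which $f^{(k_0)}\neq 0$'' should read ``highest'', since $f^{(0)}(x,\boldh)=f(x)\neq 0$ makes the literal statement vacuous).
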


\begin{proof}
Using Taylor expansion (Proposition~\ref{taylor_expansion}), we have 
\[f(x+t\boldh)g(x+t\boldh) = \sum _{j=0}^{\infty} \sum _{k+l=j} t^{k+l}f^{(k)}(x, \boldh) g^{(l)}(x, \boldh). \]
Let  $k_0$ be the highest degree for which $f^{(k_0)} \neq 0$, and similarly $l_0$ for 
$g^{(l_0)}(x, \boldh)$.  Then by Theorem~\ref{mainthm_1},  $f(x)g(x) \in \sol(\boldh; R)$ implies $k_0+l_0=0$.
Hence $k_0=l_0=0$.  Again by Theorem~\ref{mainthm_1}, proof is complete.
\end{proof}

\begin{corollary}  \label{solution_with_hn_inverted}  
Suppose that $\boldh=(h_1, \ldots, h_n)$ is a self-vanishing system in $R=K[x]$ such that $h_n(x) \neq 0$. 
Put \[s_i(x)=x_i - \frac{h_i(x)}{h_n(x)}x_n, \ (1 \leq i \leq n).\]
Let $f(x) \in \sol(\boldh; R)$.  Then 
\[f(x)= f(s_1, s_2, \ldots, s_{n-1}, 0),\]
and 
\[\sol(\boldh; K[x])= K[s_1, s_2, \ldots, s_{n-1}] \cap K[x].\]
\end{corollary}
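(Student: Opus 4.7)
The plan is to derive both assertions from Theorem~\ref{mainthm_1}(c) by specializing the parameter $t$ to a suitable element of $K(x)$. Combining the equivalence $(b)\Leftrightarrow (c)$ with Proposition~\ref{taylor_expansion}, condition (c) really encodes the polynomial identity $f(x+t\boldh(x)) = f(x)$ in $K[x][t]$, so we may substitute for $t$ any element of any extension field of $K$. Taking $t = -x_n/h_n(x) \in K(x)$, the $i$th slot becomes $x_i + t h_i = s_i$ for $i < n$, while the $n$th slot becomes $x_n + t h_n = 0$. Substituting yields the equality in $K(x)$
\[ f(x) \;=\; f(s_1, \ldots, s_{n-1}, 0), \]
which is assertion~(a).

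For (b), one inclusion is immediate from (a): the right-hand side above is the polynomial $f(y_1, \ldots, y_{n-1}, 0)$ evaluated at $y_i = s_i$, so every $f \in \sol(\boldh; K[x])$ lies in $K[s_1, \ldots, s_{n-1}] \cap K[x]$. For the reverse inclusion I would extend $\cD_x(\boldh)$ to a derivation of $K(x)$ in the usual way. The self-vanishing hypothesis gives $\cD_x(\boldh) h_i = 0$ for every $i$, so by the quotient rule $\cD_x(\boldh)(h_i/h_n) = 0$, and hence
\[ \cD_x(\boldh)\,s_i \;=\; h_i \;-\; \cD_x(\boldh)\!\left(\frac{h_i}{h_n}\right) x_n \;-\; \frac{h_i}{h_n}\,\cD_x(\boldh)\, x_n \;=\; h_i - 0 - \frac{h_i}{h_n}\,h_n \;=\; 0. \]
By the chain rule every element of $K[s_1, \ldots, s_{n-1}]$ lies in the kernel of $\cD_x(\boldh)$ on $K(x)$; if such an element additionally lies in $K[x]$, it lies in $\sol(\boldh; K[x])$, since the polynomial $\sum_j h_j\,\pa f/\pa x_j$ agrees with its image in $K(x)$.

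The only real subtlety is the licence to substitute a non-constant rational function for $t$ in Theorem~\ref{mainthm_1}(c). The Taylor expansion makes this transparent, because the identity there is polynomial in $t$ with coefficients in $K[x]$, so any evaluation in $K(x)$ is legitimate. Everything else is bookkeeping: once the two key facts $\cD_x(\boldh)\,s_i = 0$ and $f(x) = f(s_1, \ldots, s_{n-1}, 0)$ are in hand, both inclusions of (b) follow at once.
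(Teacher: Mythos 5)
Your proposal is correct and follows essentially the same route as the paper: the paper likewise obtains $f(x)=f(s_1,\ldots,s_{n-1},0)$ by substituting $t=-x_n/h_n(x)$ into the identity $f(x+t\boldh)=f(x)$ from Theorem~\ref{mainthm_1}, and obtains the inclusion $K[s_1,\ldots,s_{n-1}]\cap K[x]\subset \sol(\boldh;K[x])$ by verifying $\cD_x(\boldh)s_i=0$ (written there as the vanishing of $\cD_x(\boldh)$ applied to $\frac{1}{h_n}\det\bigl(\begin{smallmatrix} x_i & x_n\\ h_i & h_n\end{smallmatrix}\bigr)$, which is exactly your quotient-rule computation). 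Your explicit justification that the substitution of a rational function for $t$ is licensed by the polynomial identity in $K[x][t]$ is a welcome clarification but not a departure from the paper's argument.
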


\begin{proof}
It is easy to check by direct computation that 
\[\cD_x(\boldh)\left(\frac{1}{h_n(x)}\, \text{det}\!\begin{pmatrix}     x_i&x_n   \\    h_i(x)&h_n(x) \end{pmatrix}  \right)=0.\]
Hence $\cD_x(\boldh)s_j(x)=0$.  This shows that 
\[K[s_1, s_2, \ldots, s_{n-1}] \cap K[x] \subset \sol(\boldh; K[x]).\]
To show the converse, let $f(x) \in \sol(\boldh; K[x])$.  
Then $f(x) = f(x+ t \boldh)$ for an indeterminate $t$ by Theorem~\ref{mainthm_1}. 
Replace  $t$  for  $t= -\frac{x_n}{h_n(x)}$.  Then we get 
\[f(x)=f(s_1, s_2, \ldots, s_{n-1}, 0).\] 
\end{proof}

\begin{corollary}  \label{second_meaning_of_self_vanishing}   
Let $\boldh=(h_1, \ldots, h_n)$ be a self-vanishing system of polynomials. 
Let $f(x) \in \sol(\boldh; K[x])$.  
Suppose that $f(x)$ is homogeneous of positive degree $d$.  
Then we have 
\[f(h_1, h_2, \ldots, h_n)=0.\]
In particular, if each $h_j$ has positive degree, then  
\[h_j(h_1, h_2, \ldots, h_n)=0 \mbox{ for all } j=1,2, \ldots, n. \]
\end{corollary}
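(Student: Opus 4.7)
The plan is to reduce this corollary directly to Theorem~\ref{mainthm_1} by inspecting the top-degree coefficient of the Taylor expansion of $f(x+t\boldh)$.

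First, I would invoke the implication $(a)\Rightarrow(b)$ of Theorem~\ref{mainthm_1}. Since $f \in \sol(\boldh;K[x])$, this gives $f^{(j)}(x,\boldh)=0$ for every $j\geq 1$. The key observation is that by the formula $f^{(d)}(x,y)=f(y)$ noted immediately after the definition of $f^{(j)}$ in Section~\ref{self-vanishing-system} (where $d=\deg f$), substituting $y\mapsto \boldh$ yields
\[
f^{(d)}(x,\boldh)=f(h_1,h_2,\ldots,h_n).
\]
Applying the vanishing from Theorem~\ref{mainthm_1}(b) at $j=d>0$ then gives $f(h_1,\ldots,h_n)=0$, which is exactly the desired conclusion.

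For the ``in particular'' statement, I would simply note that by Definition~\ref{self_vanishing_sys}, each $h_j$ lies in $\sol(\boldh;K[x])$. Provided $h_j$ has positive degree, the general statement applied to $f=h_j$ gives $h_j(h_1,\ldots,h_n)=0$ for every $j$.

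The argument is essentially a one-line consequence of Theorem~\ref{mainthm_1}, so I do not anticipate a real obstacle. The only subtle point to keep in mind is that one must use $j=d$ (rather than $j=1$) in the vanishing $f^{(j)}(x,\boldh)=0$, because the identity $f^{(d)}(x,y)=f(y)$ is precisely what converts ``the $d$-th term in the Taylor expansion vanishes'' into ``$f$ evaluated on $\boldh$ vanishes.'' One could alternatively derive the same conclusion from condition $(c)$ of Theorem~\ref{mainthm_1} by examining the leading coefficient in $t$ of $f(x+t\boldh)$, since $f$ homogeneous of degree $d$ forces this leading coefficient to be $f(\boldh)$; the two approaches are equivalent.
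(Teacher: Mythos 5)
Your proposal is correct and follows essentially the same route as the paper's own proof: both use the identity $f^{(d)}(x,y)=f(y)$, substitute $y\mapsto\boldh$, and then invoke the vanishing $f^{(j)}(x,\boldh)=0$ for $j>0$ from Theorem~\ref{mainthm_1} at $j=d$. Your handling of the ``in particular'' clause via Definition~\ref{self_vanishing_sys} is also the intended (and in the paper, implicit) argument.
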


\begin{proof}
Let $y=(y_1, \ldots, y_n)$ be a coordinate system independent of $x$,  and put  
$\cD_x(y)^jf(x)=\frac{1}{j!}f^{(j)}(x,y)$.  We have shown that 
$f^{(d)}(x,y)=f(y)$.  In this equation substitute  $y$ for  $\boldh$.  Then we have 
$f^{(d)}(x,\boldh)=f(\boldh)$.   
In Theorem~\ref{mainthm_1}, we showed that 
$f^{(j)}(x, \boldh)=0$ for $j>0$. Thus we have $f(\boldh)=0$.        
\end{proof}

\section{Forms with zero Hessian and reduced self-vanishing systems} 

\begin{proposition}   \label{basic_1}    
Let $R=K[x_1, \ldots, x_n]$.  
Let 
$\boldf=(f_1, f_2, \ldots, f_n)$ be a system of forms in $R$.  
Then $\rank \left( \frac{\pa f_i}{\pa x_j} \right) = 
\trdeg_K\; K(f_1, f_2, \ldots, f_n)$.
In particular the following conditions are equivalent. 
\begin{enumerate}
\item[\upshape(1)]
$f_{i_1}, f_{i_2}, \ldots, f_{i_r}$ are algebraically dependent 
for every $i_1, i_2, \ldots, i_r \in \{1,2,\ldots,n\}$.
\item[\upshape(2)]
The rank of  Jacobian matrix $(\frac{\pa f_i}{\pa x_j})$ is $ < r$.
\item[\upshape(3)]
$\trdeg_K\; K(f_{i_1}, f_{i_2}, \ldots, f_{i_r}) < r$
for every $i_1, i_2, \ldots, i_r \in \{1,2,\ldots,n\}$.
\end{enumerate}
\end{proposition}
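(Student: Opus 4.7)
My plan is to establish the identity $\rank_{K(x)}(\pa f_i/\pa x_j) = \mbox{tr.deg}_K K(f_1,\ldots,f_n)$; once this holds, the three equivalent conditions follow at once by applying the identity to the subsystem $(f_1,\ldots,f_r)$, since algebraic dependence of $f_1,\ldots,f_r$ is by definition the assertion $\mbox{tr.deg}_K K(f_1,\ldots,f_r) < r$. Write $\rho$ for the Jacobian rank and $s$ for the transcendence degree, so the task reduces to proving $\rho = s$.

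For the inequality $\rho \le s$, I would pick a transcendence basis, say $f_1,\ldots,f_s$, of $K(f_1,\ldots,f_n)$ over $K$. For each $i > s$ the element $f_i$ is algebraic and, in characteristic zero, separable over $K(f_1,\ldots,f_s)$, so it has an irreducible annihilating polynomial $g_i(y_0,y_1,\ldots,y_s) \in K[y_0,\ldots,y_s]$ with $\pa g_i/\pa y_0 \neq 0$; minimality of $g_i$ in $y_0$ prevents the substitution $(\pa g_i/\pa y_0)(f_i,f_1,\ldots,f_s)$ from vanishing in $K(x)$. Differentiating $g_i(f_i,f_1,\ldots,f_s) = 0$ with respect to each $x_j$ and dividing by this nonzero coefficient writes row $i$ of the Jacobian as a fixed $K(x)$-linear combination of rows $1,\ldots,s$, so the row space has dimension at most $s$.

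For the reverse inequality $\rho \ge s$ — the Jacobi criterion proper — I would argue that if $f_1,\ldots,f_s$ are algebraically independent then $df_1,\ldots,df_s$ are $K(x)$-linearly independent in the module of K\"ahler differentials $\Omega_{K(x)/K}$, which in the basis $dx_1,\ldots,dx_n$ says the corresponding rows of the Jacobian are linearly independent. My preferred route uses the first fundamental sequence
\[ K(x) \otimes_F \Omega_{F/K} \longrightarrow \Omega_{K(x)/K} \longrightarrow \Omega_{K(x)/F} \longrightarrow 0 \]
with $F := K(f_1,\ldots,f_s)$: in characteristic zero the left map is injective, and dimension counting ($\dim_F \Omega_{F/K} = s$, $\dim_{K(x)}\Omega_{K(x)/F} = n-s$, $\dim_{K(x)}\Omega_{K(x)/K} = n$) forces its image, which is the $K(x)$-span of $df_1,\ldots,df_s$, to have dimension exactly $s$.

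The main obstacle is precisely this reverse inequality. The direction ``algebraic dependence $\Rightarrow$ linear dependence of rows'' is elementary via the minimal polynomial, but the converse genuinely requires characteristic zero, entering through separability in the K\"ahler differential computation (or, equivalently, through generic smoothness of the morphism defined by $\boldf$). Once $\rho = s$ is in hand, specializing to the $r$-row submatrix $(\pa f_i/\pa x_j)_{i \le r}$ delivers the stated equivalences.
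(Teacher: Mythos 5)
Your proof is correct. The paper gives no argument here --- its proof reads ``Left to the reader'' --- so there is nothing to compare against; your write-up supplies exactly the standard Jacobian criterion that the author is implicitly invoking. Both directions are handled properly: the inequality $\rho\le s$ via differentiating the minimal relation of each $f_i$ over the transcendence basis (with the correct justification that $(\pa g_i/\pa y_0)(f_i,f_1,\ldots,f_s)\ne 0$ by minimality), and $\rho\ge s$ via the first fundamental sequence of K\"ahler differentials, with characteristic zero entering precisely where it must (separability, hence injectivity of the left-hand map and the dimension counts $\dim_F\Omega_{F/K}=s$ and $\dim_{K(x)}\Omega_{K(x)/F}=n-s$). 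The reduction of the three listed equivalences to the rank identity applied to the subsystem $(f_1,\ldots,f_r)$ is also fine.
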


\begin{proof}
Left to the reader. 
\end{proof}

We say that a homogeneous polynomial $f \in R=K[x_1, \ldots, x_n]$ has zero Hessian if   
$\det\left(\frac{\pa ^2f}{\pa x_i \pa x_j}\right)=0$. 
(Whenever we discuss forms $f$ with zero Hessian, we assume $\deg \; f \geq 2$.)
Let $f$ be a homogeneous form in $K[x_1, \ldots, x_n]$.  
Since the Hessian determinant of $f$ is the Jacobian determinant of the partial derivatives of $f$, 
the following proposition follows immediately from Lemma~\ref{basic_1}. 

\begin{proposition} \label{basic_2}    
A homogeneous polynomial $f \in R$ is  a form with zero Hessian 
if and only if the partial derivatives
\[\frac{\pa f}{\pa x_1},\frac{\pa f}{\pa x_2},\ldots, \frac{\pa f}{\pa x_n} \]
are algebraically dependent. 
\end{proposition}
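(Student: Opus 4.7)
The plan is to observe that the Hessian matrix of $f$ is literally the Jacobian matrix of the system of first partials $\boldf = (f_1,\ldots,f_n)$, and then invoke Proposition~\ref{basic_1} applied to $\boldf$ with $r = n$.

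First I would spell out the identification: by definition, the $(i,j)$-entry of the Hessian is $\partial^2 f/\partial x_i\partial x_j$, which equals $\partial f_j / \partial x_i$, so the Hessian matrix coincides (up to transpose, which does not affect rank or determinant vanishing) with the Jacobian matrix $(\partial f_i / \partial x_j)$ of the system $\boldf$. In particular, the Hessian determinant of $f$ is the Jacobian determinant of $\boldf$.

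Next, I would apply Proposition~\ref{basic_1} to $\boldf = (f_1,\ldots,f_n)$. It gives
\[
\rank_{K(x)}\!\left(\frac{\partial f_i}{\partial x_j}\right) \;=\; \operatorname{tr.deg}_K K(f_1,\ldots,f_n).
\]
Saying that $f$ has zero Hessian is exactly saying that this Jacobian has determinant $0$ in $K(x)$, equivalently that its rank over $K(x)$ is strictly less than $n$. By Proposition~\ref{basic_1} this is equivalent to $\operatorname{tr.deg}_K K(f_1,\ldots,f_n) < n$, i.e., to $f_1,\ldots,f_n$ being algebraically dependent over $K$.

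There is no real obstacle here; the only point that deserves a brief remark is the symmetry of mixed partials, which is what lets us pass freely between the matrix $(\partial f_i/\partial x_j)$ and its transpose $(\partial^2 f/\partial x_i\partial x_j)$. Since we are in characteristic zero this symmetry is automatic, so the proposition is a direct corollary of Proposition~\ref{basic_1}, as the excerpt indicates.
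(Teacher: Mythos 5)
Your argument is exactly the paper's: the text preceding the proposition notes that the Hessian determinant of $f$ is the Jacobian determinant of the partial derivatives of $f$ and then declares the statement an immediate consequence of Proposition~\ref{basic_1}. You have simply spelled out that one-line reduction (including the harmless transpose issue), so the proposal is correct and takes the same route.
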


\begin{definition} \label{def_of_ideal_associated_to_form_with_zero_hessian}   
Let $y=(y_1, \ldots, y_n)$ be a coordinate system independent of $x$.  
Let  $f \in K[x]$ be a form with zero Hessian, and let 
\[\phi :K[y_1, \ldots, y_n] \rightarrow K[x_1, \ldots, x_n] \]
be the homomorphism defined by $\phi(y_j)= \frac{\pa f}{\pa x_j}$.  
We denote by $\cI(f)$ the kernel of $\phi$. $($By Proposition~\ref{basic_2}, $\cI(f) \neq 0$.$)$
\end{definition}

\begin{definition}  \label{reduced_system_of_polynomials}   
Let $f \in R=K[x]$ be a form with zero Hessian.  
Let $\cI(f) \subset K[y]$ be as in Definition~\ref{def_of_ideal_associated_to_form_with_zero_hessian}. 
Put $f_j=\frac{\pa f}{\pa x_j}$.  
Let $g(y)=g(y_1, \ldots, y_n) \in \cI(f)$ be 
a homogeneous form of the least degree in $\cI(f)$.  
Let \[h'_i(x_1, x_2, \ldots, x_n)=\frac{\pa g}{\pa y_i}(f_1, \ldots, f_n),\]
\[h_i(x_1, x_2, \ldots, x_n)=\frac{1}{{\rm GCD}(h'_1, h'_2, \ldots, h'_n)}h'_i(x_1, \ldots, x_n). \]
We call the vector $\boldh ':=(h'_1, h'_2, \ldots, h'_n)$ 
a system of polynomials arising from  $f(x)$, and 
$\boldh:=(h_1, \ldots, h_n)$ a reduced system of polynomials arising from $f(x)$.  
\end{definition}

\begin{remark}   
By Remark~\ref{non_vanishing_of_boldh}, 
$\boldh \neq 0$ as well as  $\boldh ' \neq 0$.   
\end{remark}

\begin{remark} \label{uniqueness_of_g_and_h}  
Assume that $\rank \left(\frac{\pa ^2 f}{\pa x_i \pa x_j}\right) =n-1$. Then 
the ideal $\cI(f)$ is a principal ideal of $K[y]$.    In this case 
$g \in \cI(f) \setminus \{0\}$ with the smallest degree is uniquely determined (up to a constant multiple). 
Hence $\boldh$, in Definition~\ref{reduced_system_of_polynomials}, is uniquely determined by $f$
(up to a nonzero element of $K$).  
On the other hand by Proposition~\ref{system_associated_to_alg_dep_system_is_svs} (b), we see that  
 $\boldh$ is a null vector of  the matrix  $\left(\frac{\pa ^2 f}{\pa x_i \pa x_j}\right)$.     
 Such a polynomial vector is unique up to a multiple of a polynomial.   
 Hence, for any system $\boldh$ of forms, we can show that $\boldh$ is the self-vanishing system as defined in
 Definition~\ref{reduced_system_of_polynomials} if and only if 
the following two conditions are satisfied. 
\begin{enumerate}
\item[\upshape(1)]
$(h_1, h_2, \ldots,  h_n) \left(\frac{\pa ^2 f}{\pa x_i \pa x_j}\right)  =0$,
\item[\upshape(2)]
$\text{GCD}(h_1, h_2 \ldots, h_n)=1$.
\end{enumerate}
\end{remark}

\begin{theorem}[Gordan-Noether]  \label{1st_main_th_of_gordan_noether}
Suppose that $f(x) \in K[x_1, \ldots, x_n]$ is a form with zero Hessian. 
Then a variable can be eliminated from $f$ and its partial derivatives 
simultaneously by means of a birational transformation of the variables.
\end{theorem}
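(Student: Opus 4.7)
\emph{Proof proposal.} The plan is to produce a self-vanishing system naturally associated to $f$ and then apply Corollary~\ref{solution_with_hn_inverted} twice: once to eliminate a variable from $f$, and once to invert the associated change of coordinates.

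First, by Proposition~\ref{basic_2}, the hypothesis that $f$ has zero Hessian is equivalent to the algebraic dependence of the partial derivatives $f_j = \pa f/\pa x_j$. So Definition~\ref{reduced_system_of_polynomials} applies and produces a reduced system $\boldh = (h_1,\ldots,h_n)$ arising from $f$. By Theorem~\ref{duality}(d) this system is self-vanishing, and by Theorem~\ref{duality}(a) we have $f \in \sol(\boldh; K[x])$. Since $\boldh \neq 0$, after a permutation of the coordinates (which changes neither the self-vanishing property nor the zero-Hessian hypothesis) we may assume $h_n \neq 0$.

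Set $s_i(x) = x_i - (h_i(x)/h_n(x))\, x_n$ for $1 \leq i \leq n-1$. Corollary~\ref{solution_with_hn_inverted} then yields $f(x) = f(s_1,\ldots,s_{n-1},0)$, which expresses $f$ as a polynomial in $s_1,\ldots,s_{n-1}$ alone. Introduce new indeterminates $y_1,\ldots,y_n$ and consider the rational map $\Phi\colon (x_1,\ldots,x_n) \mapsto (s_1(x),\ldots,s_{n-1}(x),x_n)$. In the new coordinates $y_i = s_i$ $(i<n)$, $y_n = x_n$, the polynomial $f$ becomes $f(y_1,\ldots,y_{n-1},0)$, which is independent of $y_n$. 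It remains to check that $\Phi$ is birational.

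For this, the crucial observation is that each $h_j$ itself lies in $\sol(\boldh; K[x])$ by self-vanishing, so a second application of Corollary~\ref{solution_with_hn_inverted} gives $h_j(x) = h_j(s_1,\ldots,s_{n-1},0)$ as a rational identity in $x$. This lets us write the inverse of $\Phi$ explicitly: it sends $(y_1,\ldots,y_n)$ to the point with
\[
x_i = y_i + \frac{h_i(y_1,\ldots,y_{n-1},0)}{h_n(y_1,\ldots,y_{n-1},0)}\, y_n \quad (i<n), \qquad x_n = y_n.
\]
This is a well-defined rational map because $h_n(y_1,\ldots,y_{n-1},0)$ is a nonzero polynomial in $y_1,\ldots,y_{n-1}$ (otherwise $h_n(s_1,\ldots,s_{n-1},0)$ would vanish identically, contradicting $h_n(x) \neq 0$). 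Hence $\Phi$ is birational and eliminates the variable $y_n$ from $f$.

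The main conceptual obstacle is not the birationality verification itself but the earlier machinery it rests on: everything hinges on Theorem~\ref{duality}(d), the nontrivial fact that the syzygy $\boldh$ constructed from $f$ is itself self-vanishing. Once this is granted, both $f$ and the $h_j$ are constant along the ``integral curves'' $x \mapsto x + t\boldh(x)$ of $\cD_x(\boldh)$ and hence descend to the hyperplane $x_n = 0$, producing simultaneously the variable elimination for $f$ and the rational inverse for $\Phi$.
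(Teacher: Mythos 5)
Your proposal is correct and follows essentially the same route as the paper: construct the reduced self-vanishing system $\boldh$ via Theorem~\ref{duality}, reduce $f$ to a polynomial in $s_1,\ldots,s_{n-1}$ by Corollary~\ref{solution_with_hn_inverted}, and then use $h_j\in\sol(\boldh;K[x])$ (equivalently the substitution $t=-x_n/h_n$ in Theorem~\ref{mainthm_1}(c)) to conclude $h_j(x)=h_j(s_1,\ldots,s_{n-1},0)$ and hence invert the coordinate change. Writing the inverse map explicitly rather than verifying the field equality $K(s_1,\ldots,s_{n-1},x_n)=K(x_1,\ldots,x_n)$ is only a cosmetic difference.
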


\begin{proof}
Let $\boldh$ be a reduced self-vanishing system arising from  $f$.  
Then we have  $f(x) \in \sol(\boldh; K[x])$ 
and $\pa f(x)/\pa x_j \in \sol(\boldh; K[x])$ for $j = 1,2,\ldots,n$ 
by Theorem~\ref{duality}~(a) and (b) respectively.
By Lemma~\ref{basic_001} and Remark~\ref{non_vanishing_of_boldh}, we may assume that $h_n \neq 0$.  
Put $s_j=x_j - \frac{h_j}{h_n}x_n$, for $j=1, \ldots, n-1$. Set $s_n=0$.   Then by    
Corollary~\ref{solution_with_hn_inverted},  $f$ is a polynomial in $s_1, \ldots, s_{n-1}$,
and so are $\pa f(x)/\pa x_j$ for $j = 1,2,\ldots,n$.
  We claim that 
\[K(s_1,\ldots, s_{n-1}, x_n)=K(x_1, \ldots, x_n).\]
In fact we have  
\[x_j=s_j + \frac{h_j(x)}{h_n(x)}x_n, \ \ j=1,2, \ldots, n-1.\] 
Since $h_{j}(x) \in \sol(\boldh;K[x])$,  we have $h_j(x+t\boldh(x))=h_j(x)$ for any 
$t$ in any extension field of $K$ by Theorem~\ref{mainthm_1} (c).    
Now let  $t=-\frac{x_n}{h_n}$. Then $h_j(s)=h_j(x)$. 
This shows that $x_j\in K(s_1, \ldots, s_{n-1},x_n)$ for all $j$, as desired.  
\end{proof}

\begin{proposition} \label{basic_3}   
Suppose that 
$f(x) \in K[x]$ is a form with zero Hessian. 
Let ${\cal I}(f(x))$ be the ideal of 
$K[y]$ as  defined in Definition~\ref{def_of_ideal_associated_to_form_with_zero_hessian}.  
Then the following conditions are equivalent. 
\begin{enumerate}
\item[{\rm (a)}]
The ideal ${\cal I}(f(x))$ contains a linear form. 
\item[${\rm (b)}$]
The partial derivatives of $f(x)$ are linearly dependent. 
\item[{\rm (c)}]
A variable can be eliminated from  $f(x)$ by means of a linear transformation of the variables. 
\end{enumerate}
\end{proposition}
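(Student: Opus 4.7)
The plan is to establish the equivalences (a) $\Leftrightarrow$ (b) and (b) $\Leftrightarrow$ (c) separately, each of which turns out to be essentially a direct application of earlier results.

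For (a) $\Leftrightarrow$ (b), I would unwind the definition of $\mathcal{I}(f)$: a linear form $\ell(y) = c_1 y_1 + \cdots + c_n y_n$ lies in $\mathcal{I}(f) = \ker \phi$ precisely when $\phi(\ell) = c_1 f_1 + \cdots + c_n f_n = 0$ in $K[x]$, where $f_j = \partial f/\partial x_j$. Since a nonzero such relation $\sum c_j f_j = 0$ is exactly a $K$-linear dependence among $f_1, \ldots, f_n$, the two conditions are literally restatements of each other.

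For (b) $\Rightarrow$ (c), I would appeal to Lemma~\ref{num_of_var_involved_1}: if $f_1, \ldots, f_n$ are linearly dependent over $K$, then $s := \dim_K \sum_{i=1}^{n} K f_i < n$, and that lemma supplies an invertible matrix $A$ such that after changing coordinates via $x' = Ax$, the polynomial $f'(x') = f(A^{-1}x')$ is free of $x'_{s+1}, \ldots, x'_n$; in particular at least one variable is eliminated.

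For (c) $\Rightarrow$ (b), I would run the argument of Lemma~\ref{basic_001} in reverse. If there exists $A \in \mathrm{GL}(n,K)$ such that $f'(x') = f(A^{-1}x')$ is independent of some $x'_j$, then $\partial f'/\partial x'_j = 0$. By Lemma~\ref{basic_001}, $(f'_1, \ldots, f'_n) = {}^tA^{-1}(f_1, \ldots, f_n)$, so the vanishing of a single component $f'_j$ on the left translates into a nontrivial $K$-linear relation among the $f_i$ on the right (given by the corresponding row of ${}^tA^{-1}$, which is nonzero because ${}^tA^{-1}$ is invertible). This establishes (b).

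There is no real obstacle here: the statement is an assembly of Lemma~\ref{basic_001}, Lemma~\ref{num_of_var_involved_1}, and the definition of $\mathcal{I}(f)$. The only minor point that needs a sentence of care is the direction (c) $\Rightarrow$ (b), where one must use invertibility of ${}^tA^{-1}$ to conclude that the relation among the $f_i$ read off from a zero row of the left-hand side is actually nontrivial.
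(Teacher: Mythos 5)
Your proposal is correct and follows essentially the same route as the paper: the paper also treats (a) $\Leftrightarrow$ (b) as immediate from the definition of $\cI(f)$, and proves (b) $\Leftrightarrow$ (c) by the chain rule for a linear change of coordinates (which is exactly the content of Lemma~\ref{basic_001} and Lemma~\ref{num_of_var_involved_1} that you invoke). Your explicit remark on using invertibility of ${}^tA^{-1}$ in the direction (c) $\Rightarrow$ (b) is a point the paper glosses over with ``the same argument shows (b) $\Leftarrow$ (c) as well,'' so it is a welcome clarification rather than a deviation.
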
    

\begin{proof}
The equivalence of (a) and (b) is clear.  Suppose that there exists a non-trivial relation 
\[a_1 f_1 + a_2 f_2 + \cdots + a_n f_n =0,\]
where $f_j= \frac{\pa f}{\pa x_j}$ and $a_j \in K$.   
It is possible to choose a set of linearly  independent linear forms  
$y_1, \ldots, y_n$  in $x_1, \ldots, x_n$ such that 
$\frac{\pa x_j}{\pa y_1}=a_j$.  
Then 
\[\frac{\pa f}{\pa y_1}= \frac{\pa f}{\pa x_1}\frac{\pa x_1}{\pa y_1}+ \frac{\pa f}{\pa x_2}\frac{\pa x_2}{\pa y_1}+ 
\cdots + \frac{\pa f}{\pa x_n}\frac{\pa x_n}{\pa y_1}=0.\] 
This shows that if $f$ is expressed in terms of $y_j$, then $f$ does not contain $y_1$.  Thus (b) $\Rightarrow$ (c). 
The same argument shows (b) $\Leftarrow$ (c) as well.   
\end{proof}

\begin{theorem}  \label{basic_fact_due_to_gordan_noether}
Let $f(x) \in K[x]$ be  a form with zero Hessian and let  
$\boldh'=(h'_1, \ldots, h'_n)$ be a system of 
polynomials associated to an element $g=g(y_1, \ldots, y_n) \in \cI(f(x))$ of the least degree.  
Similarly let $\boldh=(h_1, \ldots, h_n)$ be the reduced system of polynomials defined by 
\[\boldh= \frac{1}{{\rm  GCD}(\boldh ')}\boldh'.\]
(See Definition~\ref{reduced_system_of_polynomials}.)
Then we have:
\begin{enumerate}
\item[$(${\rm a}$)$]
$\boldh$ and $\boldh '$ are self-vanishing systems of polynomials.
\item[$(${\rm b}$)$]
$f(x) \in \sol(\boldh; K[x])$. 
\item[$(${\rm c}$)$]
$\frac{\pa }{\pa x_j} f(x) \in \sol(\boldh; K[x])$, for all $j=1,2, \ldots, n$.
\end{enumerate}
\end{theorem}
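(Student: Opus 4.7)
The plan is to reduce everything to Theorem~\ref{duality} applied to the system $\boldh'$, and then transfer the conclusion from $\boldh'$ to the reduced system $\boldh$ by exploiting the identity $\boldh' = c\cdot\boldh$, where $c=\text{GCD}(\boldh')$. Note that by construction, $\boldh'$ is exactly the system attached to $\boldf=(f_1,\ldots,f_n)$ in Proposition~\ref{system_associated_to_alg_dep_system_is_svs}, so Theorem~\ref{duality} applies to it directly.

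First I would verify the bridge between the two solution spaces. Since $h_i' = c\,h_i$, for every $g\in K[x]$ one has
\[
\cD_x(\boldh')\,g \;=\; \sum_i h_i'\,\frac{\pa g}{\pa x_i} \;=\; c\,\sum_i h_i\,\frac{\pa g}{\pa x_i} \;=\; c\cdot \cD_x(\boldh)\,g.
\]
Because $c\neq 0$ and $K[x]$ is an integral domain, this shows $\cD_x(\boldh')g=0$ if and only if $\cD_x(\boldh)g=0$; hence $\sol(\boldh';K[x])=\sol(\boldh;K[x])$ as subsets of $K[x]$. With this identification in hand, (b) and (c) of the theorem follow immediately from parts (a) and (b) of Theorem~\ref{duality} applied to $\boldh'$.

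For (a), the fact that $\boldh'$ is self-vanishing is Theorem~\ref{duality}(d). To deduce that $\boldh$ is self-vanishing, I would show that each $h_j$ belongs to $\sol(\boldh;K[x])$. Since $\boldh'$ is self-vanishing, $h_j' \in \sol(\boldh';K[x])$, and the factorization $h_j'=c\cdot h_j$ together with Corollary~\ref{any_factor_is_in_sol} (applied to the self-vanishing system $\boldh'$) yields $c,\,h_j \in \sol(\boldh';K[x])$. Using the equality $\sol(\boldh';K[x])=\sol(\boldh;K[x])$ established above, we conclude $h_j \in \sol(\boldh;K[x])$ for every $j$, so $\boldh$ is self-vanishing.

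The only subtle point is the step that exchanges $\boldh'$ for $\boldh$ via an honest division; once one observes that the two differential operators differ by the nonzero scalar factor $c(x)$, everything collapses, and no step requires degree bookkeeping or Bezout-type arguments. No obstacle of substance remains; in particular, there is no circularity, because Corollary~\ref{any_factor_is_in_sol} is applied only to $\boldh'$, whose self-vanishing property is supplied by Theorem~\ref{duality}(d) without reference to $\boldh$.
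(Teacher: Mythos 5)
Your proposal is correct and follows essentially the same route as the paper: apply Theorem~\ref{duality} to $\boldh'$, observe that $\sol(\boldh';K[x])=\sol(\boldh;K[x])$, and use Corollary~\ref{any_factor_is_in_sol} on the factorization $h_j'=c\,h_j$ to transfer the self-vanishing property to $\boldh$. The paper's proof is just a terser version of the same argument; your explicit justification of the equality of solution spaces via the common factor $c$ fills in exactly what the paper leaves implicit.
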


\begin{proof}
(a) 
By Theorem~\ref{duality}, $\boldh'(x)$ is a self-vanishing system. 
Note that $\sol(\boldh '; K[x])=\sol(\boldh; K[x])$. 
By Corollary~\ref{any_factor_is_in_sol}, 
$\boldh(x)$ is a self-vanishing system. 
(b) and (c) are proved in Theorem~\ref{duality}.
\end{proof} 

\begin{example} \label{no_splitting_case}
If $f = (x_1^2 x_3 + 2 x_1 x_2 x_4 + x_2^2 x_5) 
(x_1^2 x_4 + 2 x_1 x_2 x_5 + x_2^2 x_6)$, then
$\cI(f)$ is principal, and
$$
\boldh = (0, 0, x_2^3, -x_2^2 x_1, x_1^2 x_2, -x_1^3),
$$
is the unique reduced self-vanishing system arising from $f$, 
which is as in Example \ref{splitting_case}. 

But if $f = (x_3 x_1 + x_4 x_2) (x_5 x_1 + x_6 x_2)$, then
$\cI(f)$ is principal as well, and
$$
\boldh =
\big(0, 0, (x_1 x_3 + x_2 x_4) x_2, -(x_1 x_3 + x_2 x_4) x_1, 
-x_2 (x_1 x_5 + x_2 x_6), x_1 (x_1 x_5 + x_2 x_6)\big),
$$
is the unique reduced self-vanishing system arising from $f$, 
which is \emph{not} as in Example \ref{splitting_case}.
\end{example}

\section{Binary and ternary forms with zero Hessian}

\begin{theorem}   
Assume that $n=2$ and let $f \in K[x_1, x_2]$ be a form of degree $d$ 
with zero Hessian. 
Then $f=(a_1x_1+a_2x_2)^d$ for some $a_1, a_2 \in K$.  
\end{theorem}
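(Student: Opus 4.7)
The plan is to reduce the statement to the single-variable case via Lemma~\ref{num_of_var_involved_1}, using Proposition~\ref{basic_2} to unpack the zero-Hessian hypothesis. First I would set $f_i = \partial f/\partial x_i$ and invoke Proposition~\ref{basic_2} to conclude that $f_1$ and $f_2$ are algebraically dependent over $K$.

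Next I would establish the following intermediate claim: two nonzero homogeneous polynomials $p, q \in K[x_1, x_2]$ of the same degree that are algebraically dependent over $K$ are already $K$-linearly dependent. The slickest argument is to note that the ratio $p/q$ lies in the subfield $K(x_1/x_2) \subset K(x_1, x_2)$, which is purely transcendental of transcendence degree one over the algebraically closed field $K$; algebraic dependence of $p, q$ forces $p/q$ to be algebraic over $K$, hence to lie in $K$. An alternative, more elementary route is to take any polynomial relation $P(p,q)=0$, decompose $P$ into its homogeneous components in $K[Y_1, Y_2]$, observe that each component vanishes separately because the grading on $K[x_1, x_2]$ separates them, and factor the lowest nontrivial component into linear forms (using $K = \overline{K}$) to obtain an honest linear relation $\alpha p + \beta q = 0$ in the integral domain $K[x_1, x_2]$.

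Applied to $f_1, f_2$---with the degenerate case in which one partial vanishes handled separately, since then $f$ already depends on a single variable---the claim gives $\dim_K (K f_1 + K f_2) \le 1$. Lemma~\ref{num_of_var_involved_1} with $s = 1$ then produces a linear change of coordinates $x' = Ax$ after which $f$ depends only on $x_1'$. A homogeneous polynomial of degree $d$ in a single variable is $c\,(x_1')^d$, and since $K$ is algebraically closed we may write $c = \gamma^d$ and absorb $\gamma$ into the linear form to conclude that $f = (a_1 x_1 + a_2 x_2)^d$.

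The step I expect to require the most care is the intermediate claim linking algebraic dependence to linear dependence: both hypotheses---exactly two variables, and equal degree---are essential, as the analogous statement fails in $\ge 3$ variables (that failure is exactly the content of Gordan--Noether) and fails for forms of different degrees. The rest is bookkeeping using tools already assembled in the excerpt.
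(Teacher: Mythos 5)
Your proof is correct, and it reaches the same pivot point as the paper --- a genuine \emph{linear} relation among $f_1,f_2$ --- but by a different route. The paper's proof is two lines: $\cI(f)\subset K[y_1,y_2]$ is a nonzero prime ideal (kernel of the map onto the domain $K[f_1,f_2]$), hence principal, generated by an irreducible homogeneous binary form, which over $K=\overline{K}$ must be linear; Proposition~\ref{basic_3} then finishes. You instead prove directly that two equal-degree algebraically dependent binary forms are $K$-linearly dependent and conclude via Lemma~\ref{num_of_var_involved_1}. Your ``elementary route'' (split a relation into homogeneous components, factor the surviving component into linear forms, use that $K[x_1,x_2]$ is a domain) is airtight and is really the same splitting-of-binary-forms fact the paper extracts from primality of $\cI(f)$. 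Your ``slick'' version has a small unstated step: algebraic dependence of $p,q$ forces $p/q$ algebraic over $K$ only after one observes that $q$, being homogeneous of positive degree, is transcendental over $K(x_1/x_2)$, so that $p/q\notin K$ would give $\operatorname{tr.deg}_K K(p/q,q)=2$; since you supply the elementary alternative this is not a gap in the proposal as a whole. You are also more careful than the printed proof about the endgame (absorbing the constant $c=\gamma^d$ and the degenerate case of a vanishing partial), which the paper leaves implicit.
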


\begin{proof} Let ${\cal I}(f) \subset K[y_1, y_2]$ be the ideal defined in 
Definition~\ref{def_of_ideal_associated_to_form_with_zero_hessian}.
Since  ${\cal I}(f)$ is a prime ideal, it is a principal ideal 
generated by a linear form. 
Hence the assertion follows from Proposition~\ref{basic_3}. 
\end{proof}

\begin{lemma} \label{rank_of_hessian_matrix_corrected}
Let $\bm{h}$ be a homogeneous self-vanishing system in dimension $n$. Then
$$
\rk \Big(\parder{h_i}{x_j}\Big) = \trdeg_K K(\bm{h}) = \krdim K[\bm{h}] \le n-1
$$
and if $n \ge 3$ in addition, then
$$
\rk \Big(\parder{h_i}{x_j}\Big) = \trdeg_K K(\bm{h}) = \krdim K[\bm{h}] \le n-2
$$
\end{lemma}

\begin{proof}
The case where $\bm{h} = 0$ is trivial, so let us assume without loss of 
generality that $h_1 \ne 0$. From Theorem 3.11 (c), it follows that
$h_1(x+t\bm{h}(x)) = h_1(x)$. If we look at the leading coefficient with respect to
$t$, we see that $h_1(\bm{h}) = 0$. So
$$
\rk \Big(\frac{\partial h_i}{\partial x_j}\Big) = \trdeg_K K(\bm{h}) \le n-1. 
$$
This gives the case $n \le 2$, so assume from now on that $n \ge 3$.

Let $f$ be an irreducible factor of $h_1$. If each $h_i$ is a $K$-multiple 
of a power of $f$, then the components of $\bm{h}$ are linearly dependent
in pairs, and
$$
\rk \Big(\frac{\partial h_i}{\partial x_j}\Big) = \trdeg_K K(\bm{h}) \le 1 \le n - 2. 
$$
Otherwise, there exists an $h_i$ with an irreducible factor $f'$ which is 
not a $K$-multiple of $f$.
By Corollary~\ref{any_factor_is_in_sol},
we have $f'(x) \in \sol(\boldh; K[x])$ as well as  $f(x) \in \sol(\boldh; K[x])$.
From Theorem~\ref{mainthm_1}  it follows that   
$f(x+t\bm{h}(x)) = f'(x+t\bm{h}(x)) = 0$.  If we look at the leading coefficient 
with respect to $t$, we see that 
\begin{equation} \label{fh0} 
f(\bm{h}(x)) = f'(\bm{h}(x)) = 0,  
\end{equation}
which gives the second claim of Lemma~\ref{rank_of_hessian_matrix_corrected}.
\end{proof}

\begin{theorem} \label{ternary_form_with_zero_hessian_is_trivial}  
Suppose that $f=f(x) \in K[x_1, x_2, x_3]$ is a form with zero Hessian.  Then 
a variable can be eliminated from $f$ by means of a linear transformation of variables. 
\end{theorem}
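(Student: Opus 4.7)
By Proposition~\ref{basic_3}, the theorem reduces to producing a nontrivial linear relation $c_1 f_1 + c_2 f_2 + c_3 f_3 = 0$ (with $c_i \in K$) among the partial derivatives $f_i := \pa f/\pa x_i$. My plan is to read off such a relation from the reduced self-vanishing system $\boldh = (h_1, h_2, h_3)$ attached to $f$ in Definition~\ref{reduced_system_of_polynomials}. By Theorem~\ref{duality}(a),(d) the system $\boldh$ is self-vanishing and $f \in \sol(\boldh; K[x])$, and by Remark~\ref{non_vanishing_of_boldh} one has $\boldh \neq 0$.

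Applying Lemma~\ref{rank_of_hessian_matrix} with $n=3$, the rank of the Jacobian matrix $(\pa h_i/\pa x_j)$ is at most $3/2$, hence at most $1$; equivalently (Proposition~\ref{basic_1}), the transcendence degree of $K(h_1, h_2, h_3)$ over $K$ is at most $1$. The crucial step is now to upgrade this bound into strict proportionality of $h_1, h_2, h_3$ over $K$. Because the $h_i$ are homogeneous of the same degree, the image of the morphism $\AAA^3 \to \AAA^3$, $x \mapsto \boldh(x)$, is a conical subvariety of dimension at most $1$, so its projectivization in $\PP^2$ has dimension at most $0$. Being irreducible and nonempty (the locus $V(\boldh) \subset \PP^2$ is proper since $\boldh \neq 0$), this projective image is a single point $(a_1:a_2:a_3)$, and therefore $a_i h_j = a_j h_i$ in $K[x]$ for all $i, j$. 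Consequently every $h_i$ is a constant multiple of one and the same polynomial $p$ (namely $p = h_i/a_i$ for any $i$ with $a_i \neq 0$), and the reduced hypothesis $\text{GCD}(h_1, h_2, h_3) = 1$ then forces $p \in K$, so all three $h_i$ lie in $K$.

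Finally, the membership $f \in \sol(\boldh; K[x])$ now reads $h_1 f_1 + h_2 f_2 + h_3 f_3 = 0$ with coefficients $(h_1, h_2, h_3) \in K^3 \setminus \{0\}$, and Proposition~\ref{basic_3} delivers the conclusion. The principal obstacle is the middle paragraph: homogeneity is essential to converting a transcendence-degree-$\leq 1$ statement about $h_1, h_2, h_3$ into strict $K$-proportionality, because the affine conical image of dimension $\leq 1$ in $\AAA^3$ must then be exactly a line through the origin (projecting to a single point of $\PP^2$), rather than a more general curve.
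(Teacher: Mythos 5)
Your proof is correct and follows essentially the same route as the paper: both reduce the problem via Proposition~\ref{basic_3} to showing that the reduced self-vanishing system $\boldh$ is a nonzero constant vector, using Lemma~\ref{rank_of_hessian_matrix} to bound the dimension of $K[h_1,h_2,h_3]$ and the condition ${\rm GCD}(h_1,h_2,h_3)=1$ to force constancy. The only (cosmetic) difference is the intermediate step from transcendence degree $\leq 1$ to $K$-proportionality of the $h_i$: you argue geometrically via the conical image projectivizing to a single point, whereas the paper factors a pairwise homogeneous algebraic relation into linear factors over the algebraically closed field $K$.
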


\begin{proof}
Let $\boldh=(h_1, h_2, h_3)$  be a reduced system of polynomials arising from $f(x)$ 
(See Definition~\ref{reduced_system_of_polynomials}).  We claim that 
$\boldh$ is  a constant vector. Suppose that it is not. 
Then ${\rm Krull\,dim}\;K[h_1, h_2, h_3]=1$ by Proposition~\ref{rank_of_hessian_matrix_corrected}. 
Thus  any two of the elements $h_1, h_2, h_3$ has a homogeneous algebraic relation. 
Since $K[h_1, h_2, h_3]$ 
is an integral domain, and $K$ is algebraically closed,  they  should be  linear relations. Thus we have 
$\dim _K\; Kh_1 + Kh_2 + Kh_3=1$.  Since ${\rm GCD}(h_1, h_2, h_3)=1$, this is impossible unless they are  constants. 
Recall that we have 
\[h_1\frac{\pa f}{\pa x_1}+ h_2\frac{\pa f}{\pa x_2}+ h_3\frac{\pa f}{\pa x_3}=0.\]
By Proposition~\ref{basic_3}, proof is complete. 
\end{proof}

\section{The rational map defined by $\boldh$ }
In this section, we assume that $n \geq 4$.  
Let $R=K[x_1, \ldots, x_n]$ 
and let $\boldh = (h_1, \ldots, h_n)$ be any homogeneous reduced self-vanishing system.
So ${\rm GCD}(h_1, \ldots, h_n) = 1$.

Let \[Z : \PP ^{n-1}(x) \rightarrow \PP^{n-1}(y)\]  
be the rational map defined by the correspondence $x=(x_1: \cdots : x_n) \mapsto (h_1 : \cdots : h_n)$.  
Let $W$ be the image of $Z$  and  $T$ the fundamental locus of $Z$ in $\PP^{n-1}(x)$ defined by 
the equations  $h_1(x)=h_2(x)= \cdots =h_n(x)=0$.
The algebraic set $W \subset \PP^{n-1}(y)$ is defined by the kernel of the homomorphism
defined by 
\[y_j \mapsto  h_j, (j=1, 2, \ldots, n).\] 
which corresponds to $Z$. 

\begin{proposition} \label{yamada's_corollary_2_page38}   
The following conditions are equivalent.
\begin{enumerate}
\item[{\rm (a)}]
$\deg \; (h_j)=0$, i.e., $\boldh$ is a constant vector.

\item[{\rm (b)}]
$\dim \; W =0$, i.e., $W$ is a one-point set. 

\item[{\rm (c)}]
$T$ is empty, i.e., $Z$ is a morphism. 
\end{enumerate}
\end{proposition}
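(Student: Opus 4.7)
The plan is to prove (a) $\Leftrightarrow$ (b) and (a) $\Leftrightarrow$ (c), so that the three conditions all coincide. Two of the four implications are essentially formal; the serious content lies in (c) $\Rightarrow$ (a), which I expect to follow quickly from Lemma~\ref{rank_of_hessian_matrix}.

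I would dispense first with (a) $\Leftrightarrow$ (b). For (a) $\Rightarrow$ (b), if $\boldh=(c_1,\ldots,c_n)\in K^n\setminus\{0\}$, then $Z$ is defined everywhere and sends every point to $(c_1:\cdots:c_n)$, so $W$ is a single point. For (b) $\Rightarrow$ (a), if $W=\{(a_1:\cdots:a_n)\}$ and $a_k\neq 0$, then $(h_1(x):\cdots:h_n(x))=(a_1:\cdots:a_n)$ on a dense open set, so the polynomial identity $a_k h_i=a_i h_k$ holds for every $i$. Each $h_i$ is then a $K$-scalar multiple of $h_k$, and the reduced condition $\mathrm{GCD}(h_1,\ldots,h_n)=1$ forces $h_k$, hence every $h_i$, to be a constant.

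Next I would handle (a) $\Leftrightarrow$ (c). The forward direction is immediate: a nonzero constant vector has empty common zero locus in $\AAA^n$, so $T=\emptyset$. For the converse (c) $\Rightarrow$ (a) I would argue by contraposition. Suppose some $h_j$ has positive degree; since all $h_j$ are homogeneous of a common positive degree, any common zero in $\AAA^n\setminus\{0\}$ yields a point of $T$, so $T=\emptyset$ would force $V(h_1,\ldots,h_n)\subseteq\{0\}$ in $\AAA^n$. By the Nullstellensatz, $(h_1,\ldots,h_n)$ would then be $\m$-primary in $R=K[x_1,\ldots,x_n]$, whence $h_1,\ldots,h_n$ would form a homogeneous system of parameters and $R$ would be module-finite over $K[h_1,\ldots,h_n]$. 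In particular $\dim K[h_1,\ldots,h_n]=n$. But Lemma~\ref{rank_of_hessian_matrix} bounds this Krull dimension by $n/2$, which for $n\geq 4$ is a contradiction. Hence $T$ must be nonempty whenever $\boldh$ is non-constant.

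The only real obstacle is (c) $\Rightarrow$ (a): without Lemma~\ref{rank_of_hessian_matrix} the self-vanishing hypothesis alone gives no a priori control on the transcendence degree of $K(h_1,\ldots,h_n)$ over $K$, and the geometric statement that the fundamental locus of a rational self-map of $\PP^{n-1}$ must be nonempty whenever the map is not a morphism is not formal. The vanishing $\phi^2=0$ exploited in Lemma~\ref{rank_of_hessian_matrix} is precisely the bridge that converts the algebraic self-vanishing condition into the geometric constraint needed to force $T\neq\emptyset$.
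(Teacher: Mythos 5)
Your proof is correct, but the implication carrying the real content --- (c) $\Rightarrow$ (a) --- is handled by a genuinely different mechanism than in the paper. The paper's proof is a one-liner resting on Corollary~\ref{second_meaning_of_self_vanishing}: if $\boldh$ is non-constant, then $h_j(h_1,\ldots,h_n)=0$ identically for every $j$, so the specialization $(h_1(a):\cdots:h_n(a))$ at any point $a$ where $\boldh$ does not vanish is itself a point of $T$; in other words the image $W$ of $Z$ already sits inside the fundamental locus, whence $T\neq\emptyset$. You instead invoke Lemma~\ref{rank_of_hessian_matrix}: if $T$ were empty the $h_j$ would cut out only the origin, hence generate an $\m$-primary ideal, forcing $R$ to be module-finite over $K[h_1,\ldots,h_n]$ and so ${\rm Krull.dim}\;K[h_1,\ldots,h_n]=n$, against the bound $n/2$. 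Both arguments ultimately exploit the same fact $\phi^2=0$ (it underlies both the corollary and the lemma), and both are valid. The paper's version is sharper in that it exhibits explicit points of $T$ and is the germ of Theorem~\ref{1st_main_thm_in_the_case_dim_W=1} (namely $i(L(W))\subset T$); yours buys only the non-emptiness of $T$, at the cost of the Nullstellensatz and a dimension count, but requires nothing beyond the already-proved rank bound. Your treatment of (a) $\Leftrightarrow$ (b), using the reducedness ${\rm GCD}(h_1,\ldots,h_n)=1$ to pass from proportionality of the $h_i$ to their constancy, correctly fills in what the paper dismisses as ``trivial.''
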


\begin{proof}
Suppose that $\boldh$  is not a constant.  Then we have  
\[h_j(h_1, \ldots, h_n)=0 \ \text{for every} \ j=1, \ldots, n.\] 
by Corollary~\ref{second_meaning_of_self_vanishing}.  
Thus any specialization of $(h_1, \ldots, h_n)$ is a point of $T$. 
This shows that if  $T$ is empty, then $\boldh$ is a constant vector.  All other implications are trivial.  
\end{proof}


\begin{proposition}  \label{dimension_of_T} \label{yamada's_proposition_3_page38}  
If $\dim \; W \geq 1$, then 
\[ 2 \leq  \mbox{\rm Krull\,dim}\; K[x]/(h_1, \ldots, h_n) \leq n-2,\]
or equivalently,  
\[ 1 \leq \dim \; T   \leq n-3.\]
\end{proposition}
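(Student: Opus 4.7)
The plan is to treat the two inequalities independently, using Lemma~\ref{rank_of_hessian_matrix} for the lower bound and the reducedness hypothesis $\mathrm{GCD}(h_1,\ldots,h_n)=1$ for the upper bound. Throughout, set $V := V(h_1,\ldots,h_n) \subset \AAA^n$, so that $V$ is the affine cone over $T$ and $\dim V = \dim T + 1 = \mathrm{Krull.dim}\,K[x]/(h_1,\ldots,h_n)$.

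For the upper bound $\dim T \leq n-3$, I would argue as follows. Since $K[x]$ is a UFD and $\mathrm{GCD}(h_1,\ldots,h_n)=1$, no irreducible polynomial divides every $h_j$; hence the ideal $(h_1,\ldots,h_n)$ is contained in no principal prime ideal, so every minimal prime over $(h_1,\ldots,h_n)$ has height at least $2$. Consequently $\mathrm{Krull.dim}\,K[x]/(h_1,\ldots,h_n) \leq n-2$, which is exactly $\dim T \leq n-3$. (This is the usual observation that the base locus of a rational map $\PP^{n-1} \dashrightarrow \PP^{n-1}$ defined by forms without common factor has codimension at least two.) The assumption $\dim W \geq 1$ is not actually needed for this direction, but the statement pairs the two bounds together.

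For the lower bound $\dim T \geq n/2 - 1$, I would use the morphism $\phi : \AAA^n \to \AAA^n$ sending $x \mapsto (h_1(x), \ldots, h_n(x))$ from the proof of Lemma~\ref{rank_of_hessian_matrix}. Let $\overline{\phi(\AAA^n)}$ denote the Zariski closure of the image. This closure is the affine variety defined by $\ker(K[y_1,\ldots,y_n] \to K[h_1,\ldots,h_n])$, and so
\[
\dim\,\overline{\phi(\AAA^n)} = \mathrm{Krull.dim}\,K[h_1,\ldots,h_n] \leq n/2
\]
by Lemma~\ref{rank_of_hessian_matrix}. The morphism $\phi$ is dominant onto this closure and $\AAA^n$ is irreducible; since $h_j$ are homogeneous of positive degree (using $\dim W \geq 1$ via Proposition~\ref{yamada's_corollary_2_page38}), the fiber $\phi^{-1}(0) = V$ is nonempty (it contains the origin). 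By the fiber dimension theorem (Chevalley), every nonempty fiber has dimension at least $n - \dim\,\overline{\phi(\AAA^n)} \geq n - n/2 = n/2$. Hence $\dim T = \dim V - 1 \geq n/2 - 1$.

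The only step that requires genuine care is the application of the fiber dimension inequality: one must check that $\phi$, viewed as a dominant map onto $\overline{\phi(\AAA^n)}$, does satisfy the hypotheses of Chevalley's theorem (domain irreducible, codomain irreducible), and that the fiber over the origin is nonempty. Irreducibility of $\overline{\phi(\AAA^n)}$ follows from that of $\AAA^n$, and nonemptiness of the fiber from homogeneity of $\boldh$, so no substantive obstacle remains. The two bounds together give the asserted inequalities.
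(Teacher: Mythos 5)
Your proof is correct and takes essentially the same route as the paper: the upper bound comes from $\mathrm{GCD}(h_1,\ldots,h_n)=1$ forcing every minimal prime over $(h_1,\ldots,h_n)$ to have height at least two, and the lower bound from the fiber-dimension inequality for the map $x\mapsto \boldh(x)$ over the origin combined with Lemma~\ref{rank_of_hessian_matrix}. The paper merely phrases the latter algebraically, observing that $K[x]/(h_1,\ldots,h_n)$ is the fiber at the origin of the inclusion $K[h_1,\ldots,h_n]\hookrightarrow K[x]$.
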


\begin{proof}
Since $T$ is not empty, the ideal $(h_1, h_2, \ldots, h_n) \subset K[x]$ is not the unit ideal by the previous proposition. 
On the other hand, since ${\rm GCD}(h_1, \ldots, h_n) = 1$, it is not a principal ideal. 
Hence ${\rm ht}(h_1, \ldots, h_n)  \geq 2$.  This shows that $\dim  \; T = {\rm Krull\,dim}\; K[x]/(h_1, \ldots, h_n) -1 \leq n-3$.  
On the other hand there exists a surjective homomorphism of rings:
\[ K[x_1, \ldots, x_n]/(h_1, \ldots, h_n) \rightarrow K[h_1, \ldots, h_n], \]
\[   x_j \mapsto h_j, \]
provided that $\deg \; \boldh > 0$. 
Note that $K[x_1, \ldots, x_n]/(h_1, \ldots, h_n)$ is the fiber at the origin of the
inclusion map  $K[h_1, \ldots, h_n] \rightarrow K[x_1, \ldots, x_n]$ 
and ${\rm Krull\,dim} \; K[h_1, \ldots, h_n] \leq n-2$ by Lemma~\ref{rank_of_hessian_matrix_corrected}. 
Hence  we have 
${\rm Krull\,dim}\;  K[x_1, \ldots, x_n]/(h_1, \ldots, h_n) \geq 2$.
\end{proof}

For the rest of  this section we assume that $\dim \; W=1$.  In this case the fiber 
of $Z : \PP^{n-1}(x) \rightarrow \PP ^{n-1}(y)$ is a hypersurface 
of $\PP ^{n-1}(x)$.  Thus, for any $\om \in \PP^{n-1}(y)$, $Z^{-1}(\om)$ is defined by one homogeneous polynomial in $x=(x_1, \ldots,  x_n)$.  

\begin{definition}  
Let $Z : \PP ^{n-1} (x) \rightarrow \PP ^{n-1} (y)$ be as above. 
Let $\om =(\om _1 : \om _2: \cdots : \om _n) \in \PP^{n-1}(y)$.  
We denote by $g^{(\om)}(x)$ the square-free polynomial 
in $K[x]$ that defines the hypersurface of the fiber of $Z$ at $\om \in W \subset  \PP ^{n-1}(y)$. 
\end{definition}

For each point $\om=(\om_1:\om_2: \cdots : \om_n) \in \PP ^{n-1}$, 
we define the differential operator $\cD_x(\om)$  on $R=K[x_1, \ldots, x_n]$ by 
\[\cD_x(\om)f(x)= \sum _{j=1}^n\ \om_j \frac{\pa f(x)}{\pa x_j} \]
whose value is determined up to a non-zero constant factor.  Hence 
we may speak of the set of solutions  of the equation $\cD_x(\om)f(x)=0$.    
We denote the space of solutions  of $\cD_x(\om)f(x)=0$ in $R=K[x_1, \dots, x_n]$ 
by $\sol(\om;R)$.     
It is a subring of $R$.  
For any subset $U$ of $\PP ^{n-1}$ we denote by $\sol(U; R)$ the space of solutions 
 of the system of linear differential equations 
\[\cD_x(\om)(f(x))=0, \ \om  \in U.\]
If we denote by $L(U)$ the linear closure of  $U$   in $\PP ^{n-1}$ it is easy to see that  
\[\sol(U; R)=\sol(L(U); R).\]
If $U=\{\om ^{(1)}, \om ^{(2)}, \ldots , \om ^{(s)}\}$  is a finite set,  then  
$\sol(U;R)$ is also denoted as 
\[\sol(\om ^{(1)}, \om ^{(2)}, \ldots , \om ^{(s)}; R).\]
The same notation is used if we replace $\PP ^{n-1}(y)$ for the vector space $K^n$ in the obvious sense.  
Namely for a linear subspace $L$ of $K^n$, we denote by 
$\sol(L; R)$, the set of solutions of 
the differential equations 
\[ \cD_x(\bolda)F(x)  =0 {\mbox{ for all } \bolda \in L},\]
where $\bolda$ denotes a row vector in $L$ regarded as a system of constants.    
In a  set theoretic notation, 
\[\sol(L; R)=\left\{F(x) \in R \left| \left(a_1 \frac{\pa}{\pa x_1}+ a_2 \frac{\pa}{\pa x_2} + \cdots + a_n \frac{\pa}{\pa x_n}\right)F(x)=0, 
(a_1, \ldots, a_n) \in L   \right. \right\}. \]
Note that $\sol(L;R)$ is a subring of $R$ generated by homogeneous linear forms.
 
The relation between the subring $\sol(L; R)$ of $R$ and the ideal $I$ for the linear subspace in $\PP ^{n-1}$  is  very important for us.  
In the next theorem and corollary we describe a set of generators of the subring $\sol(L; R)$ and a set of generators that defines 
the linear space $L$ as a subspace of $\PP ^{n-1}$. 


\begin{theorem}  \label{subring_by_generated_by_determinates}   
Let $R=K[x_1, \ldots, x_n]$ be the polynomial ring over $K$. 
Let $A=(a^{(j)}_i)$ be a $k \times n$ matrix, $a^{(j)}_i \in K$,  with 
$\bolda^{(j)}=(a^{(j)}_1, a^{(j)}_2, \ldots, a^{(j)}_n)$ as the $j$-th row. 
Suppose that the  rows are linearly independent.  
Assume that $k <  n$.  Then the set of solutions as a subring of $K[x]$ 
\[\sol(\bolda^{(1)}, \ldots, \bolda^{(k)}; R):=\bigcap _{j=1}^k \sol(\bolda^{(j)};R)\] 
is isomorphic to the polynomial ring in $n-k$ variables.  It is generated by 
the $(k+1) \times (k+1)$ minors of the matrix 
\[
A':=\left(\begin{array}{ccccc}
a^{(1)}_1  &  a^{(1)}_2 & \cdots  & a^{(1)}_{n-1} & a^{(1)}_n \\ 
a^{(2)}_1  &  a^{(2)}_2 & \cdots  & a^{(2)}_{n-1} & a^{(2)}_n \\ 
          &   &   &   &                                   \\
a^{(k)}_1  &  a^{(k)}_2 & \cdots  & a^{(k)}_{n-1} & a^{(k)}_n \\  
\hline 
x_1  & x_2   &  \cdots & x_{n-1}& x_n
\end{array}
\right)
\]  
\end{theorem}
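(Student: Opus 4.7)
My plan is to first establish the polynomial ring structure via a linear change of coordinates, and then to verify the generation-by-minors statement by a direct determinantal argument. Since $\bolda^{(1)},\ldots,\bolda^{(k)}$ are linearly independent, I would extend them to a basis $\bolda^{(1)},\ldots,\bolda^{(n)}$ of $K^n$ and let $A$ be the $n\times n$ invertible matrix whose $j$-th row is $\bolda^{(j)}$. Introducing new coordinates $y$ via $x={}^tA\,y$, a direct chain-rule computation (essentially Lemma~\ref{basic_001}) shows that under this substitution the operator $\cD_x(\bolda^{(j)})$ becomes $\pa/\pa y_j$. Consequently
\[\sol(\bolda^{(1)},\ldots,\bolda^{(k)};R)=\{F\in K[y]\mid \pa F/\pa y_j=0,\ 1\le j\le k\}=K[y_{k+1},\ldots,y_n],\]
which is a polynomial ring in $n-k$ variables generated in degree one. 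In particular $\sol\cap R_1$ has $K$-dimension $n-k$, and $\sol$ is generated as a $K$-algebra by $\sol\cap R_1$.

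Next I would check that every $(k+1)\times(k+1)$ minor of $A'$ lies in $\sol$. Fix columns $j_1<\cdots<j_{k+1}$ and let $M$ be the associated minor. Only the last row of $A'$ depends on $x$, so the derivation $\cD_x(\bolda^{(\nu)})$, for $1\le\nu\le k$, acts on $M$ by replacing its bottom row $(x_{j_1},\ldots,x_{j_{k+1}})$ with $(a^{(\nu)}_{j_1},\ldots,a^{(\nu)}_{j_{k+1}})$; this new row coincides with the $\nu$-th row of $A'$ restricted to the same columns. The resulting determinant has two equal rows and therefore vanishes, so $M\in\sol$.

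To finish, I would exhibit $n-k$ linearly independent minors spanning $\sol\cap R_1$. Each minor is a linear form in $x_1,\ldots,x_n$ by Laplace expansion along the last row. Since the rows of $A$ are linearly independent, after a column permutation I may assume the leftmost $k\times k$ block of $A$ is non-singular. For $m=k+1,\ldots,n$ the minor on columns $\{1,2,\ldots,k,m\}$ expands to a linear form whose coefficient of $x_m$ is the nonzero determinant of that leftmost block, while no variable $x_\ell$ with $\ell>k,\ \ell\ne m$ appears. These $n-k$ linear forms are visibly linearly independent, hence span the $(n-k)$-dimensional space $\sol\cap R_1$. Combined with the first paragraph, this shows that the full collection of $(k+1)\times(k+1)$ minors of $A'$ generates $\sol$ as a $K$-algebra. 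The only genuine obstacle is keeping the row-and-column bookkeeping clean in this final spanning argument; everything else is standard multilinear algebra once one observes that $\cD_x(\bolda^{(\nu)})$ applied to a determinant whose $x$-dependence lives in a single row is merely row substitution.
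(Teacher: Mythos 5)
Your argument is correct and complete, and it follows the same skeleton as the paper's proof (identify the solution set as a polynomial ring on $n-k$ linear forms; check the minors are solutions; check their linear span has dimension $n-k$), but both nontrivial steps are carried out by genuinely different sub-arguments. The paper identifies the solution ring by reducing to the case $k=1$ via Corollary~\ref{solution_with_hn_inverted} (a constant vector being a self-vanishing system) combined with the remark that row operations on $A'$ do not change the solution set; you instead extend the rows to a basis and change coordinates so that $\cD_x(\bolda^{(j)})$ becomes $\pa/\pa y_j$, which exhibits the isomorphism with $K[y_{k+1},\ldots,y_n]$ in one stroke and makes the degree-one generation transparent. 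For the statement that the minors span an $(n-k)$-dimensional space of linear forms, the paper appeals to Pl\"{u}cker relations and explicitly leaves the details to the reader, whereas you choose, after a column permutation making the leftmost $k\times k$ block of $A$ nonsingular, the $n-k$ minors on the column sets $\{1,\ldots,k,m\}$ and note that the resulting linear forms are triangular in $x_{k+1},\ldots,x_n$ with nonzero diagonal entries, hence independent; this is more elementary and avoids Grassmannian machinery. Your row-substitution computation showing each minor is annihilated by $\cD_x(\bolda^{(\nu)})$ (the derivative replaces the bottom row by a copy of the $\nu$-th row, producing a determinant with two equal rows) is the same calculation the paper has in mind. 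The net effect is that your version is self-contained where the paper's is a sketch, at no cost in generality.
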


\begin{proof}
Let $V$ be the vector space of common syzygies of $\bolda^{(1)}, \ldots, \bolda^{(k)}$
over $K$. Take $\boldv = (v_1,v_2,\ldots,v_n) \in V$ and define a linear form
\begin{equation} \label{Qdef}
l_{\boldv} := v_1 x_1 + v_2 x_2 + \cdots + v_n x_n
\end{equation}
Let $k = n - \dim \, V$. Take a basis 
$\boldv_1, \boldv_2, \allowbreak \ldots, \boldv_{n-k}$ of $V$, 
and extend it to a basis $\boldv_1, \boldv_2, \ldots, \boldv_n$ of $K^n$. 
Let $l_1, l_2, \ldots, \allowbreak l_n$ be the corresponding linear forms as 
defined in \eqref{Qdef}. Then 
$$
R = K[l_1,l_2,\ldots,l_n].
$$
Now take $f \in R$. Then we can write
$f = g(l_1,l_2,\ldots,l_n)$ where $g \in K[y_1,y_2,\ldots,y_n]$.
Write $f_i = \pa f / \pa x_i$ and $g_i = \pa g / \pa y_i$ for $i = 1,2,\ldots,n$. 
Then
$$
(f_1,f_2,\ldots,f_n) = g_1(l_1,l_2,\ldots,l_n) \boldv_1 + g_2(l_1,l_2,\ldots,l_n) \boldv_2 + 
\cdots + g_n(l_1,l_2,\ldots,l_n) \boldv_n
$$
Hence $f \in \sol(\bolda^{(1)}, \ldots, \bolda^{(k)}; R)$
if and only if $g_{n-k+1} = g_{n-k+2} = \cdots = g_n = 0$, i.e., 
$f \in K[l_1,l_2,\ldots,l_{n-k}]$. Indeed, $K[l_1,l_2,\ldots,l_{n-k}]$
in isomorphic to the polynomial ring in $n - k$ variables.

If we take for $f$ a $(k+1) \times (k+1)$ minor of $A'$, then 
$f(\bolda^{(j)}) = 0$ because $f(\bolda^{(j)})$ is the determinant of a matrix 
of which the last row coincides with row $j$. From this, we infer that
the $(k+1) \times (k+1)$ minor of $A'$ are linear combinations
of $l_1,l_2,\ldots,l_{n-k}$. So it remains to show the converse, i.e.,
that $l_{\boldv}$ is a linear combination of the $(k+1) \times (k+1)$ minors of 
$A'$ for every $\boldv \in V$.

Suppose first that $k = n - 1$. Then $V = K w_1$ has dimension $1$, and
$l_{\boldv}$, $l_1$ and $\det A'$ are the same up to a nonzero constant
for every nonzero $\boldv \in V$. So $l_{\boldv}$ is a linear combination 
of the $(k+1) \times (k+1)$ minors of $A'$ for every $\boldv \in V$.

Suppose next that $k < n - 1$. Then we can extend 
$\bolda^{(1)}, \ldots, \bolda^{(k)}$ to a basis $\bolda^{(1)}, \ldots, \bolda^{(n-1)}$
of the syzygies over $K$ of $\boldv$. Now the case $k = n - 1$ yields 
$l_{\boldv}$ as a determinant of an $n \times n$ matrix up to a nonzero 
constant. If we expand this matrix along rows $k+1, k+2, \ldots,n-1$, 
then we get a linear combination of the $(k+1) \times (k+1)$ minors of $A'$. 
\end{proof}

Corollaries~\ref{ideal_generators_for_L} and \ref{second_description_of_linear_forms} that follow 
are other ways to describe the set of linear forms as generators for $\sol(L;R)$. 

\begin{corollary}   \label{ideal_generators_for_L} 
Let $R$ and $A$ be the same  as Theorem~\ref{subring_by_generated_by_determinates}, and furthermore 
let $L$ be the vector subspace in $K^n$ generated by the rows of $A$. 
Then the set of solutions $\sol(L; R)$, as a subring of $R=K[x_1, \ldots, x_n]$,  is  generated by 
the linear forms $l=l(x)$ such that 
\[l(a^{(j)}_1, a^{(j)}_2, \ldots, a^{(j)}_n)=0, \] 
for all $j=1,2, \dots, k$. 
\end{corollary}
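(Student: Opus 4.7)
The plan is to read off the corollary from Theorem~\ref{subring_by_generated_by_determinates} by identifying the $(k{+}1)\times(k{+}1)$ minors of $A'$, viewed as linear forms in $x$, with a basis of the space $L^{\perp}$ of linear forms vanishing on every row of $A$. Since $\sol(L;R)$ coincides with $\sol(\bolda^{(1)}, \ldots, \bolda^{(k)}; R)$ (the operator $\cD_x(\bolda)$ depends linearly on $\bolda$), Theorem~\ref{subring_by_generated_by_determinates} already says that $\sol(L;R)$ is generated as a subring by these minors, and the only remaining content is to recognize them intrinsically.

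First I would observe that every $(k{+}1)\times(k{+}1)$ minor of $A'$, expanded along the last row, is a linear form $l(x)=\sum_i c_i x_i$. To show $l(\bolda^{(j)})=0$ for each $j$, I would substitute $\bolda^{(j)}$ for the last row: the resulting $(k{+}1)\times(k{+}1)$ matrix has its bottom row equal to the $j$-th row from $A$, hence the determinant vanishes. This proves that every generator given by Theorem~\ref{subring_by_generated_by_determinates} lies in the $K$-span of linear forms $l$ with $l(\bolda^{(j)})=0$ for all $j$.

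Next I would match dimensions. Because the rows $\bolda^{(1)},\dots,\bolda^{(k)}$ are linearly independent, the orthogonal subspace $L^{\perp}\subset R_1$ has dimension $n-k$. On the other hand Theorem~\ref{subring_by_generated_by_determinates} asserts that the minors span a subspace of $R_1$ of dimension exactly $n-k$ (this is the Pl\"ucker/linear-algebra content invoked there). Hence the $K$-span of the minors equals $L^{\perp}$. Consequently the subring $\sol(L;R)$, generated by the minors, is the same as the subring generated by all linear forms $l(x)$ with $l(\bolda^{(j)})=0$ for all $j=1,\dots,k$, which is the statement of the corollary.

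The argument is essentially bookkeeping once Theorem~\ref{subring_by_generated_by_determinates} is in place; the only mild obstacle is the dimension claim for the span of the minors, but this is exactly what the cited Pl\"ucker-relation argument in the proof of Theorem~\ref{subring_by_generated_by_determinates} provides, so nothing new needs to be proved here.
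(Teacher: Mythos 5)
Your argument is correct; the paper's own proof of this corollary is literally ``Left to the reader,'' and your derivation --- each minor of $A'$ vanishes at every row $\bolda^{(j)}$ by the repeated-row determinant argument, and the dimension count $n-k$ supplied by Theorem~\ref{subring_by_generated_by_determinates} then forces the span of the minors to be all of $L^{\perp}$ --- is exactly the bookkeeping the author intends. A small shortcut worth noting: for a linear form $l(x)=\sum_i c_i x_i$ one has $\cD_x(\bolda^{(j)})\,l = l(\bolda^{(j)})$ directly, so the degree-one part of $\sol(L;R)$ is $L^{\perp}$ on the nose, and the only input really needed from the theorem is that $\sol(L;R)$ is generated by its linear forms.
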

\begin{proof}
This follows from the proof of Theorem \ref{subring_by_generated_by_determinates}.
\end{proof}

\begin{corollary}  \label{second_description_of_linear_forms} 
Suppose that $U \subset \PP ^{n-1}(y)$ is a subset and let $L(U)$ be  the linear closure of $U$. 
Then $\sol(U; R)=\sol(L(U);R)$ is generated by the linear forms of  
\[\{\  l(x) \in K[x_1, x_2, \ldots, x_n] | l(\om _1, \om _2, \ldots, \om _n)=0 \ \mbox{ for all } (\om_1 : \om _2: \cdots : \om _n ) \in L(U) \},\]
as a subring of $K[x]$.    
\end{corollary}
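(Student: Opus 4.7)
The plan is to deduce Corollary~\ref{second_description_of_linear_forms} directly from Corollary~\ref{ideal_generators_for_L} by passing from a basis of the linear span to the whole linear span, taking advantage of the fact that the generators in question are homogeneous of degree one.

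First I would dispose of the equality $\sol(U;R)=\sol(L(U);R)$, which is already claimed in the running text just before Theorem~\ref{subring_by_generated_by_determinates}: the operator $\cD_x(\om)$ depends $K$-linearly on the coefficient vector $\om$, so the common zero set of the differential equations $\cD_x(\om)F=0$, $\om\in U$, is the same as the common zero set for all $\om$ in the $K$-linear span of (lifts of) $U$. Using the identification $i:\PP^{n-1}(y)\to\PP^{n-1}(x)$ to transport $U$ into $\PP^{n-1}(x)$, this reduces the problem to the linear case: we may assume $U$ is itself the image of a linear subspace $L\subset K^n$ of some dimension $k$, and we must show that $\sol(L;R)$ is generated, as a subring of $R$, by the linear forms $l(x)$ vanishing on every element of $L$.

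Next I would choose a basis $\bolda^{(1)},\ldots,\bolda^{(k)}$ of $L$ and apply Corollary~\ref{ideal_generators_for_L} to the $k\times n$ matrix $A$ whose rows are these basis vectors. That corollary says exactly that $\sol(L;R)$ is generated, as a subring of $R$, by the linear forms $l(x)=c_1x_1+\cdots+c_nx_n$ satisfying $l(\bolda^{(j)})=0$ for each $j=1,\ldots,k$. (The case $k=n$ is degenerate: then only constants are solutions, and there are no nonzero linear-form generators, consistent with the statement.)

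The final step is just to translate the basis-level vanishing into vanishing on all of $L$: since $l$ is a homogeneous linear form, $l(\bolda^{(1)})=\cdots=l(\bolda^{(k)})=0$ is equivalent, by $K$-linearity of $l$, to $l(\bolda)=0$ for every $\bolda\in L$; and because $l$ is homogeneous of degree one, vanishing at a nonzero vector $\bolda$ is the same as vanishing at its class in $\PP^{n-1}$. Pulling back through $i$, the set of generators is precisely the set of linear forms $l(x)$ with $l(\om_1,\ldots,\om_n)=0$ for all $(\om_1:\cdots:\om_n)\in L(U)$, as claimed. I do not anticipate a real obstacle here; the statement is essentially a projective repackaging of Corollary~\ref{ideal_generators_for_L}, and the only mildly subtle point is the (harmless) passage from vanishing on a basis to vanishing on the full linear closure, together with the affine-vs-projective bookkeeping.
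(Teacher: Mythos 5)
Your proposal is correct and follows the route the paper clearly intends: the paper states this corollary without proof (and leaves Corollary~\ref{ideal_generators_for_L} to the reader), the equality $\sol(U;R)=\sol(L(U);R)$ being asserted in the running text via the $K$-linearity of $\cD_x(\om)$ in $\om$, after which the statement is exactly Corollary~\ref{ideal_generators_for_L} transported through $i$ and restated with vanishing on all of $L(U)$ instead of on a basis. Your handling of the basis-versus-span and affine-versus-projective bookkeeping, and of the degenerate case $k=n$, is accurate.
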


The following two theorems are very important to determine the forms in four and five variables with zero Hessian. 

\begin{theorem}  \label{1st_main_thm_in_the_case_dim_W=1}   
Let $\boldh = (h_1, \ldots, h_n)$ be a self-vanishing system of forms in $R=K[x_1, \ldots, x_n]$.  
Let $Z:\PP ^{n-1}(x) \rightarrow \PP ^{n-1}(y)$ be the rational map defined by  $y_j =  h_j(x)$. 
Let $T \subset \PP ^{n-1}(x)$ be the fundamental locus and  $W$  the image of $Z$.  
Assume that $\dim \;W=1$. 
Let 
\[i: \PP ^{n-1}(y) \rightarrow \PP ^{n-1}(x)\] be the natural map $y_j \rightarrow x_j$.   
Then $i(L(W)) \subset T$, where  $L(W)$ is the linear closure of $W$ in $\PP ^{n-1}(y)$. 
\end{theorem}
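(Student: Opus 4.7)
The plan is to show that each $h_j(y)$, regarded as a polynomial on $\PP^{n-1}(y)$, vanishes on the linear closure $L(W)$; under the identification $i$, this is precisely $i(L(W)) \subset V(h_1,\ldots,h_n) = T$. As a first step, Corollary~\ref{second_meaning_of_self_vanishing} gives $h_j(h_1(x),\ldots,h_n(x)) \equiv 0$ in $K[x]$, so $h_j$ lies in the defining ideal of the image of $\boldh$ and hence of its Zariski closure $W$. This yields the easy containment $i(W) \subset T$, but is not yet enough unless $W = L(W)$.

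To upgrade the vanishing from $W$ to $L(W)$, the strategy is to establish the stronger assertion $h_j \in \sol(L(W);R)$. By Corollary~\ref{second_description_of_linear_forms}, $\sol(L(W);R) = \sol(W;R)$ is the $K$-subring of $R$ generated by those linear forms $\ell(x) \in R_1$ with $\ell(\omega) = 0$ for every $\omega \in L(W)$; every homogeneous element of positive degree in this subring then vanishes on $L(W)$ tautologically. By definition, $h_j \in \sol(W;R)$ is equivalent to the vanishing of $\cD_x(\omega)h_j(x)$ in $K[x]$ for every constant $\omega \in W$, and linearity of $\cD_x$ in the $\omega$-slot extends this automatically to all of $\tilde L(W)$. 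Writing a typical $\omega \in W$ as $\omega = \boldh(p)$, the required identity becomes
\[ \sum_{k=1}^n h_k(p)\, \frac{\pa h_j}{\pa x_k}(x) \;=\; 0 \qquad \text{in } K[x,p]. \]

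The main obstacle is this polynomial identity with $p$ and $x$ independent: self-vanishing of $\boldh$ yields only the diagonal specialization at $p=x$, and decoupling the two variables is where the hypothesis $\dim W = 1$ must enter. My approach is to parametrize the curve $W$ rationally by $\omega = \gamma(t)$ and combine (i) Proposition~\ref{svs_behaves_as_variables}, which furnishes the polynomial identities $h_j^{(m)}(x,\boldh(x)) \equiv 0$ in $K[x]$ for all $m \geq 1$, with (ii) the polarization symmetry $h_j^{(m)}(x,y) = h_j^{(d_j-m)}(y,x)$ (where $d_j = \deg h_j$), in order to argue that the mixed Taylor polarizations $h_j^{(m)}(\omega_1,\omega_2)$ vanish for all $\omega_1,\omega_2 \in W$. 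The endpoints $m = 0$ and $m = d_j$ already follow from the first step, while the 1-parameter family along $W$ is what supplies the extra degree of freedom needed to separate the two arguments in the middle range $0 < m < d_j$. Proposition~\ref{taylor_expansion} then yields $h_j(c_1\omega_1 + c_2\omega_2) = 0$ for all $c_i \in K$, and iterating through a basis of $\tilde L(W)$ drawn from points of $W$ (with analogous control of the multivariate polarizations at each step) extends the vanishing to all of $L(W)$. This bookkeeping of the iteration from bivariate to multivariate polarizations, together with the initial decoupling, is where I expect the proof to be technically most involved.
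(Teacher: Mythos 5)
Your opening reduction coincides with the paper's entire proof of this statement: Theorem~\ref{1st_main_thm_in_the_case_dim_W=1} follows from the assertion $h_j\in\sol(i(L(W));R)$ for all $j$ (this is exactly Theorem~\ref{2nd_main_thm_in_the_case_dim_W=1}) together with Corollary~\ref{second_description_of_linear_forms}, because a homogeneous element of positive degree in the subring generated by linear forms vanishing on $i(L(W))$ itself vanishes there. The trouble is that everything after this reduction --- the actual proof that $h_j\in\sol(W;R)$ --- remains a plan whose central step is not justified. You need the mixed polarizations $h_j^{(m)}(\om_1,\om_2)$ to vanish for \emph{independent} points $\om_1,\om_2\in W$, but the identities you actually have, namely $h_j^{(m)}(x,\boldh(x))=0$ for $m\ge 1$ (from Theorem~\ref{mainthm_1} and Proposition~\ref{svs_behaves_as_variables}) and the symmetry $h_j^{(m)}(x,y)=h_j^{(d_j-m)}(y,x)$, are both ``diagonal'': for a fixed $\om\in W$ they only say that $h_j^{(m)}(x,\om)$ vanishes as $x$ ranges over the fibre $Z^{-1}(\om)$, i.e.\ that $g^{(\om)}(x)$ divides $h_j^{(m)}(x,\om)$. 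To evaluate at $x=i(\om_1)$ for a different $\om_1\in W$ you would need to know that $i(W)$ lies on every fibre hypersurface $V(g^{(\om)})$, which is a genuinely geometric fact about the fibres and not a formal consequence of self-vanishing. The sentence ``the 1-parameter family along $W$ is what supplies the extra degree of freedom'' is precisely where the proof ought to be, and nothing is there. That the decoupling cannot be formal is underlined by Remark~\ref{final_remark}: whether $h_j\in\sol(i(L(W));R)$ holds for arbitrary self-vanishing systems is stated as unknown, so the hypothesis $\dim W=1$ must enter through concrete geometry.

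The paper supplies that geometry via the fibre polynomials $g^{(\om)}$. Proposition~\ref{main_proposition_1} shows $g^{(\om)}\in\sol(\boldh;R)$: any $\sum a_jh_j$ with $\sum a_j\om_j=0$ vanishes on $Z^{-1}(\om)$, hence is a multiple of $g^{(\om)}$, and Corollary~\ref{any_factor_is_in_sol} passes the property to the factor. Lemmas~\ref{technical_lemma_1} and~\ref{technical_lemma_2}, a commutation of $\cD(\om)$ with $\cD(\om')$, and the same factor argument then give $g^{(\om)}\in\sol(L(W);R)$ (Proposition~\ref{main_proposition_2}). Finally $h_j$ is shown to equal, up to a constant, the product of the $g^{(\om^{(i)})}$ over the finitely many $\om^{(i)}\in W\cap\{y_j=0\}$, using that $T$ has codimension at least $2$ so that $H_j\setminus T\subset\bigcup Z^{-1}(\om^{(i)})$ forces $H_j\subset\bigcup Z^{-1}(\om^{(i)})$. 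None of these ingredients appears in your sketch, and without them (or a genuine substitute) the decoupling step, and hence the proof, is missing. Two further cautions: even granting the bivariate vanishing, your final iteration only kills $h_j$ on the secant lines of $W$, which need not exhaust $L(W)$ when $\dim L(W)$ is large, so the multivariate polarization step you defer is itself nontrivial; and the rational parametrization $\gamma(t)$ you invoke is legitimate only because a unirational curve is rational (L\"uroth), a point worth making explicit if you pursue this route.
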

\begin{proof}
Recall that $T$ is defined by the polynomials $h_1(x), \ldots, h_n(x)$. 
On the other hand $i(L(W))$ is defined by linear forms which vanish on the set $i(L(W))$. Hence,  
in view of Corollary~\ref{second_description_of_linear_forms}, this follows immediately 
from Theorem~\ref{2nd_main_thm_in_the_case_dim_W=1} below.    
\end{proof}

\begin{theorem} \label{2nd_main_thm_in_the_case_dim_W=1}   
With the same notations and assumptions as in Theorem~\ref{1st_main_thm_in_the_case_dim_W=1}, 
we have
\[h_j(x) \in \sol(L(W);R) \  \mbox{for  all}\ j=1,2, \ldots, n. \] 
\end{theorem}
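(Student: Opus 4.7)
My plan is to first use Corollary~\ref{second_description_of_linear_forms} to convert the statement into a polynomial identity, then use the rational parametrization of $W$ together with Theorem~\ref{mainthm_1}(c) to establish it.  By the cited corollary, $\sol(i(L(W));R)$ equals the polynomial subring of $R$ generated by those linear forms whose coefficient tuples annihilate $\boldh$, i.e.\ by the subspace
\[
N = \{(c_1,\ldots,c_n) \in K^n : \sum_k c_k h_k = 0 \text{ in } K[x]\}.
\]
A homogeneous $f$ of positive degree lies in this subring if and only if $\cD_x(\omega) f = 0$ for every $\omega \in L(W)^a = N^\perp \subset K^n$.  Because $L(W)^a$ is the $K$-linear span of $W^a = \{\boldh(a) : a \in K^n\}$ and $\cD_x$ is linear in its direction argument, the theorem reduces to the polynomial identity
\[
P_j(x,a) := \sum_{k=1}^n h_k(a)\,\frac{\partial h_j(x)}{\partial x_k} \equiv 0 \quad \text{in } K[x,a], \qquad j=1,\ldots,n.
\]

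Two partial vanishings of $P_j$ are essentially free: self-vanishing of $\boldh$ gives $P_j(x,x)=0$, and Corollary~\ref{second_meaning_of_self_vanishing} gives $h_k(\boldh(b))=0$, hence $P_j(x,\boldh(b))=0$ for every $b$.  Viewed as a polynomial in $a$ with coefficients in $K[x]$, $P_j(x,\,\cdot\,)$ thus already lies in the ideal of $W^a$ in $K[a]$; the real work is to strengthen this to $P_j \equiv 0$, and it is at this point that the hypothesis $\dim W = 1$ must enter essentially.  I would parametrize the curve $W$ rationally as $t \mapsto \omega(t) = \boldh(a(t))$ via a rational section $a:\PP^1 \dashrightarrow \PP^{n-1}(x)$; Theorem~\ref{mainthm_1}(c) applied at the basepoint $x=a(t)$ yields $h_j(a(t) + s\,\omega(t)) = \omega_j(t)$ for every $s$, and Taylor-expanding in $s$ gives the family of identities $\cD_x(\omega(t))^m h_j(x)\big|_{x=a(t)} = 0$ for all $m \geq 1$.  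Since the vectors $\omega(t)$ sweep out a $K$-linear generating set of $L(W)^a$, propagating these pointwise Taylor vanishings into the polynomial identity $\cD_x(\omega(t)) h_j(x) \equiv 0$ in $(x,t)$ would finish the argument.

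The main obstacle is precisely this last propagation step.  Knowing that a polynomial vanishes on the ruled surface $\{(x,t) : x = a(t) + s\,\omega(t), \ s \in K\}$ is, in isolation, far too weak to force identical vanishing in $x$.  The rigidity one has to exploit is the geometric consequence of $\dim W = 1$: the image cone of $\boldh$ has dimension only $2$, the generic fibers of $\boldh$ are of dimension $n-2$, and the orbits of the vector field $Y = \sum_k h_k\,\partial/\partial x_k$ foliate each fiber in a very constrained way.  I expect the argument to realize this rigidity by showing that the generic fiber of $\boldh$ is actually invariant under translation by the whole subspace $L(W)^a$ (a statement formally equivalent to Theorem~\ref{2nd_main_thm_in_the_case_dim_W=1} itself), and that one can derive this invariance by combining the Taylor identities above with the dual system $\widetilde{\boldh}$ produced by Proposition~\ref{system_associated_to_alg_dep_system_is_svs} applied to $\boldh$ — which supplies the extra syzygy among $\boldh$ and its partial derivatives $\partial \boldh/\partial x_j$ needed to pin down the $a$-dependence of $P_j$.
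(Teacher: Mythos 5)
Your reduction is sound as far as it goes: by Corollary~\ref{second_description_of_linear_forms} and the linearity of $\cD_x(\omega)$ in $\omega$, the theorem is indeed equivalent to the identity $P_j(x,a)=\sum_k h_k(a)\,\pa h_j(x)/\pa x_k\equiv 0$ in $K[x,a]$. But the two ``free'' vanishings you record carry almost no information --- $P_j(x,\boldh(b))=0$ is vacuous because every coefficient $h_k(\boldh(b))$ is already zero, and $P_j(x,x)=0$ is just self-vanishing restricted to the diagonal --- and you then candidly concede that the propagation from the pointwise Taylor identities along a section $a(t)$ to the full identity is the real difficulty, and that the invariance statement you would need is ``formally equivalent to the theorem itself.'' That concession is accurate: as written, the proposal is a correct reformulation followed by a circular plan, not a proof. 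The hypothesis $\dim W=1$ never does any actual work in your argument.

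The idea you are missing is the factorization of $h_j$ through the fibers of $Z$. Because $\dim W=1$, the set $W_j=W\cap\{y_j=0\}$ is finite, say $\{\omega^{(1)},\ldots,\omega^{(s)}\}$, and outside the fundamental locus $T$ (which has codimension at least $2$ by Proposition~\ref{yamada's_proposition_3_page38}) the hypersurface $\{h_j=0\}$ is exactly $\bigcup_i Z^{-1}(\omega^{(i)})$; since each fiber is a hypersurface cut out by a single form $g^{(\omega^{(i)})}$, one gets $h_j=\mbox{(const)}\prod_i g^{(\omega^{(i)})}$. It then suffices to show each $g^{(\omega)}\in\sol(L(W);R)$, which the paper does by a chain of divisibility and degree arguments: a general linear combination $\sum a_kh_k$ vanishing on $Z^{-1}(\omega)$ is divisible by $g^{(\omega)}$, so $g^{(\omega)}\in\sol(\boldh;R)$ by Corollary~\ref{any_factor_is_in_sol} (Proposition~\ref{main_proposition_1}); evaluating that relation at points of $Z^{-1}(\omega')$ shows $\cD_x(\omega')g^{(\omega)}$ is divisible by $g^{(\omega')}$ (Lemma~\ref{technical_lemma_1}); taking $\omega'=\omega$ forces $\cD_x(\omega)g^{(\omega)}=0$ for degree reasons (Lemma~\ref{technical_lemma_2}); and commuting the constant-coefficient operators $\cD_x(\omega)$, $\cD_x(\omega')$ and applying Corollary~\ref{any_factor_is_in_sol} once more yields $g^{(\omega')}\in\sol(\omega;R)$ for all pairs (Proposition~\ref{main_proposition_2}). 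Since $\sol(L(W);R)$ is a ring, the product formula for $h_j$ finishes the proof. Without this factorization step --- or some genuine substitute for it --- your outline cannot be completed.
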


We prove it after some propositions.  For the rest of this section we 
fix notation and assumption of Theorem~\ref{1st_main_thm_in_the_case_dim_W=1}.
In particular it is  assumed  that $\dim \;W=1$.
 
\begin{proposition} \label{main_proposition_1}  
For any $\om \in W$, we have $g^{(\om)}(x) \in \sol(\boldh; R)$. 
\end{proposition}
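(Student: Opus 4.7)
The plan is to use Theorem~\ref{mainthm_1}(c) to reduce the claim to the polynomial identity $g^{(\om)}(x+t\boldh(x))=g^{(\om)}(x)$ in $K[x,t]$, and then verify this identity geometrically by analyzing the flow $\psi_t(x):=x+t\boldh(x)$ on $\AAA^n$ and invoking algebraic closedness of $K$.

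First I would set up $\psi_t$ as a polynomial automorphism of $\AAA^n$. Since each $h_j\in\sol(\boldh;R)$ by the self-vanishing hypothesis, Theorem~\ref{mainthm_1}(c) gives $h_j(x+t\boldh(x))=h_j(x)$, i.e.\ $\boldh\circ\psi_t=\boldh$. A direct substitution then yields $\psi_s\circ\psi_t=\psi_{s+t}$, so $\psi_{-t}$ is a polynomial inverse of $\psi_t$. In particular $Z\circ\psi_t=Z$ as rational maps, so $\psi_t$ sends the fiber $Z^{-1}(\om)$ to itself and hence preserves its Zariski closure $V(g^{(\om)})\subset\AAA^n$.

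Next I would use algebraic closedness of $K$. For any $x_0\in\AAA^n$ with $g^{(\om)}(x_0)\neq 0$, the orbit $\{\psi_t(x_0):t\in K\}$ must avoid $V(g^{(\om)})$ entirely---if $\psi_{t_0}(x_0)\in V(g^{(\om)})$, then $x_0=\psi_{-t_0}(\psi_{t_0}(x_0))\in V(g^{(\om)})$, a contradiction. Hence the univariate polynomial $p_{x_0}(t):=g^{(\om)}(x_0+t\boldh(x_0))\in K[t]$ has no root in $K$ and is therefore a nonzero constant; evaluating at $t=0$ forces $p_{x_0}(t)\equiv g^{(\om)}(x_0)$. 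By Zariski density of the open set $\{g^{(\om)}\neq 0\}$, the identity $g^{(\om)}(x+t\boldh(x))=g^{(\om)}(x)$ then holds in $K[x,t]$, and Theorem~\ref{mainthm_1}(c) yields $g^{(\om)}\in\sol(\boldh;R)$. The step I expect to require the most care is verifying that $V(g^{(\om)})$ is preserved by $\psi_t$ as a subset of $\AAA^n$, since the rational map $Z$ is undefined on the indeterminacy locus $T$; one needs to argue that the Zariski closure of the fiber, not merely its generic locus, is preserved by the polynomial automorphism $\psi_t$.
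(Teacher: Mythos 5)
Your proof is correct, but it takes a genuinely different route from the paper's. The paper argues by divisibility: choose a hyperplane $a_1y_1+\cdots+a_ny_n=0$ through $\om$ with $f:=\sum_j a_jh_j\neq 0$; then $f$ vanishes at every point of $Z^{-1}(\om)$ (on $T$ because all $h_j$ vanish there, off $T$ because $Z(\al)=\om$ lies on the hyperplane), so $g^{(\om)}$ divides $f$; since each $h_j\in\sol(\boldh;R)$ by self-vanishing, $f\in\sol(\boldh;R)$, and Corollary~\ref{any_factor_is_in_sol} transfers this to the factor $g^{(\om)}$. You instead exploit the one-parameter group $\psi_t(x)=x+t\boldh(x)$: the identity $\boldh\circ\psi_t=\boldh$ (Theorem~\ref{mainthm_1}(c) applied to the $h_j$) gives $\psi_s\circ\psi_t=\psi_{s+t}$ and $Z\circ\psi_t=Z$, hence invariance of the affine cone $V(g^{(\om)})$ under each $\psi_t$, and algebraic closedness of $K$ then forces $g^{(\om)}\circ\psi_t=g^{(\om)}$ identically. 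Both arguments consume exactly the same input (that $\boldh$ is self-vanishing), but the paper's is shorter and purely algebraic, while yours makes the geometric mechanism explicit --- the closed fibers of $Z$ are unions of orbits of the flow $\psi_t$ --- and avoids the auxiliary choice of a hyperplane with $\sum_j a_jh_j\neq 0$. The one place where you lean on the paper's implicit conventions is the identification of $V(g^{(\om)})$ with the Zariski closure in $\AAA^n$ of the locus $\{\boldh(x)\neq 0,\ Z(x)=\om\}$; that is how $g^{(\om)}$ is defined, so the step is legitimate, and your closing remark correctly flags it as the point needing care.
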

\begin{proof}
Choose a hyperplane $H \subset \PP ^{n-1}(y)$ such that $\om \in H$, and suppose that $H$ is defined by the linear equation 
\[a_1y_1 + \cdots + a_ny_n=0.\]  
Put $f(x)=\sum _{j=1}^na_jh_j(x)$.  Since $K$ is infinite, it is possible to choose $H$ such that $f(x) \neq 0$.  
Then for any $\al \in Z^{-1}(\omega)$, if $\al \in T$, then $h_j(\al)=0$ for all $j$ by definition  of $T$. 
Hence $f(\al)=0$.  If $\al \not \in T$, then 
\[f(\al)=\sum _{j=1}^n a_jh_j(\al)= c\sum _{j=1}^n a_j\om _j=0.\]
This shows that any point on $Z^{-1}(\om)$ is a zero of $f(x)$.  Hence $f(x)$  is a multiple of  $g^{(\om)}(x)$. 
By Corollary~\ref{any_factor_is_in_sol}, this proves that $g^{(\om)}(x) \in \sol(\boldh; R)$ for any $\om \in W$. 
\end{proof}

\begin{proposition} \label{main_proposition_2}   
For each point $\om \in W$, we have $g^{(\om)}(x) \in \sol(L(W); R)$.  
\end{proposition}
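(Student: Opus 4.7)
The plan is to verify $\cD_x(\bolda)\,g^{(\om)}(x)=0$ for every $\bolda\in L(W)$. Since $\cD_x(\bolda)$ is linear in $\bolda$, the set of $\bolda\in K^n$ satisfying $\cD_x(\bolda)\,g^{(\om)}=0$ is a linear subspace of $K^n$; as $L(W)$ is by definition the linear closure of $W$, it therefore suffices to prove the identity $\cD_x(\om')\,g^{(\om)}(x)=0$ for each individual point $\om'\in W$.

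The key input is Proposition~\ref{main_proposition_1}, which supplies the polynomial identity
\[
\sum_{j=1}^n h_j(x)\,\frac{\pa g^{(\om)}}{\pa x_j}(x) \;=\; 0.
\]
Fix $\om'\in W$ and restrict this identity to the fiber $Z^{-1}(\om')$. At any $x_0\in Z^{-1}(\om')\setminus T$ one has $\boldh(x_0)=\lambda\,\om'$ for some $\lambda\neq 0$, so substituting gives $\lambda\,\cD_x(\om')\,g^{(\om)}(x_0)=0$. Hence $\cD_x(\om')\,g^{(\om)}(x)$ vanishes on the Zariski-dense open subset $Z^{-1}(\om')\setminus T$ of the fiber (dense because Proposition~\ref{yamada's_proposition_3_page38} gives $\dim T\le n-3$ while the fiber has dimension $n-2$), and hence on the whole fiber. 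Since $Z^{-1}(\om')$ is cut out by the polynomial $g^{(\om')}(x)$, it follows that $g^{(\om')}(x)$ divides $\cD_x(\om')\,g^{(\om)}(x)$ in $K[x]$.

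The argument then closes with a degree comparison: $\deg\cD_x(\om')\,g^{(\om)}=\deg g^{(\om)}-1<\deg g^{(\om)}=\deg g^{(\om')}$, where the final equality holds because the fibers of the dominant rational map $Z:\PP^{n-1}(x)\rightarrow W$ over the curve $W$ have generically constant degree. This strict inequality is incompatible with the divisibility established above unless $\cD_x(\om')\,g^{(\om)}(x)$ is identically zero, which is precisely the identity we need.

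The step I expect to demand the most care is the uniform degree equality $\deg g^{(\om)}=\deg g^{(\om')}$ over every pair $(\om,\om')\in W\times W$, not merely over a generic pair. This is automatic on a Zariski-dense subset by constancy of generic fiber degree, but to cover every point one may appeal either to flatness of the restriction of $Z$ over a smooth open sub-curve of $W$, or to Zariski-closedness of the condition $\cD_x(\om')\,g^{(\om)}\equiv 0$ in $(\om,\om')$ together with its validity on a dense open set.
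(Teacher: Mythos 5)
Your reduction to showing $\cD_x(\om')\,g^{(\om)}(x)=0$ for every $\om'\in W$ is correct, and your derivation of the divisibility $g^{(\om')}\mid \cD_x(\om')g^{(\om)}$ is exactly the paper's Lemma~\ref{technical_lemma_1} (note, as the paper does, that this step uses that $g^{(\om')}$ is reduced). The gap is in how you close. Your degree comparison needs $\deg g^{(\om')}\ge \deg g^{(\om)}$ for \emph{every} pair $(\om,\om')\in W\times W$, i.e.\ constancy of the degree of the reduced fiber polynomial over all of $W$, and you have not established this. It is not automatic: for a rational map onto a curve the reduced fiber can degenerate at special points of $W$ (the scheme-theoretic fiber may be non-reduced there, so its reduced defining form has strictly smaller degree), and the two repairs you sketch do not obviously work --- flatness over a smooth open sub-curve only controls generic $\om$, and the ``Zariski-closedness in $(\om,\om')$'' argument presupposes that $\om\mapsto g^{(\om)}$ is an algebraic family of forms of fixed degree, which is precisely what is in doubt at the special points. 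Since a single pair with $\deg g^{(\om')}\le \deg g^{(\om)}-1$ destroys the contradiction, the argument as written is incomplete.

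The paper avoids this issue entirely. It uses a degree argument only on the diagonal $\om'=\om$ (Lemma~\ref{technical_lemma_2}: $\cD_x(\om)g^{(\om)}=r(x)\,g^{(\om)}$ forces $r=0$, and there no comparison between degrees of different fibers is needed), and then handles $\om\ne\om'$ by commuting the constant-coefficient operators: $\cD_x(\om)\bigl(\cD_x(\om')g^{(\om)}\bigr)=\cD_x(\om')\bigl(\cD_x(\om)g^{(\om)}\bigr)=0$, so writing $\cD_x(\om')g^{(\om)}=r(x)\,g^{(\om')}$ gives $r(x)\,g^{(\om')}\in\sol(\om;R)$, and Corollary~\ref{any_factor_is_in_sol} (a factor of a solution is a solution, applied to the constant self-vanishing system $\om$) yields $g^{(\om')}\in\sol(\om;R)$. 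If you want to keep your route you must actually prove the uniform degree statement; otherwise you should replace the degree comparison by this commutation argument.
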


Proof is preceded by two lemmas. 

\begin{lemma}   \label{technical_lemma_1}   
For $\omega, \omega ' \in W$, the polynomial     
$\cD_x(\omega ')g^{(\omega)}(x)$ is divisible by  $g^{(\omega ')}(x)$. 
\end{lemma}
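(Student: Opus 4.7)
The approach is to show that the polynomial $\cD_x(\omega')g^{(\omega)}(x)$ vanishes on the hypersurface $Z^{-1}(\omega')$, and then invoke the Nullstellensatz to conclude divisibility by the (reduced) defining polynomial $g^{(\omega')}(x)$. The key point driving the vanishing is Proposition~\ref{main_proposition_1}, which gives $g^{(\omega)}(x) \in \sol(\boldh; R)$, i.e.\ the polynomial identity $\sum_j h_j(x)\,\partial g^{(\omega)}/\partial x_j = 0$ on all of $R$.

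First I would pick a point $\alpha \in Z^{-1}(\omega')$ that is \emph{not} in the fundamental locus $T$. By definition of the rational map $Z$, we then have $(h_1(\alpha): \cdots : h_n(\alpha)) = (\omega'_1: \cdots : \omega'_n)$ in $\PP^{n-1}(y)$, so there is a nonzero constant $c \in K^\times$ with $h_j(\alpha) = c\,\omega'_j$ for every $j$. Substituting into the definition of $\cD_x(\omega')$ and using Proposition~\ref{main_proposition_1} yields
\[
\cD_x(\omega')g^{(\omega)}(\alpha) \;=\; \sum_{j=1}^n \omega'_j \frac{\pa g^{(\omega)}}{\pa x_j}(\alpha) \;=\; \frac{1}{c}\sum_{j=1}^n h_j(\alpha)\frac{\pa g^{(\omega)}}{\pa x_j}(\alpha) \;=\; \frac{1}{c}\bigl[\cD_x(\boldh)\,g^{(\omega)}\bigr](\alpha) \;=\; 0.
\]
Hence $\cD_x(\omega')g^{(\omega)}$ vanishes at every point of $Z^{-1}(\omega')\setminus T$.

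Next I would promote this pointwise vanishing to vanishing on the entire hypersurface $Z^{-1}(\omega')$ by a density argument. Each irreducible component of $Z^{-1}(\omega')$ has dimension $n-2$, whereas Proposition~\ref{yamada's_proposition_3_page38} gives $\dim T \leq n-3$; therefore no component of $Z^{-1}(\omega')$ is contained in $T$, and $Z^{-1}(\omega')\setminus T$ is open and dense in $Z^{-1}(\omega')$. Since the zero locus of $\cD_x(\omega')g^{(\omega)}$ is Zariski closed and contains this dense subset, it contains the full fiber $Z^{-1}(\omega')$. By the Nullstellensatz, the reduced defining polynomial $g^{(\omega')}$ of this fiber then divides $\cD_x(\omega')g^{(\omega)}$, which is the claim.

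The main obstacle, or at least the only subtle point, is ensuring that $g^{(\omega')}$ really does divide $\cD_x(\omega')g^{(\omega)}$ rather than merely having the same zero set; this hinges on $g^{(\omega')}$ being taken as the \emph{reduced} (squarefree) generator of the ideal of $Z^{-1}(\omega')$, which is the convention implicit in the preceding definition. Everything else is a direct consequence of the self-vanishing identity encoded in Proposition~\ref{main_proposition_1} together with the dimension estimate on $T$.
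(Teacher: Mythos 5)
Your proof is correct and follows essentially the same route as the paper: evaluate the identity $\cD_x(\boldh)g^{(\omega)}=0$ from Proposition~\ref{main_proposition_1} at points of the fiber $Z^{-1}(\omega')$, deduce that $\cD_x(\omega')g^{(\omega)}$ vanishes there, and use reducedness of $g^{(\omega')}$ to get divisibility. Your added density argument (excluding $T$ and using $\dim T\leq n-3$) is a welcome clarification of a step the paper leaves implicit.
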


\begin{proof}
In Proposition~\ref{main_proposition_1}, we proved that $g^{(\omega)}(x) \in \sol(\boldh; R)$.
So if $\cD_x(\boldh)$ is applied to $g^{(\omega)}(x)$,  it becomes $0$.  
Namely, 
\[h_1(x)\frac{\pa g^{(\omega)}(x)}{\pa x_1}+ \cdots + h _n(x)\frac{\pa g^{(\omega)}(x)}{\pa x_n}=0. \] 
Choose $\al \in Z^{-1}(\om ')$ and make the substitution  $x \mapsto \al$ in the above equality.
Then the  left hand side  is the same as  
\[\cD_x(\om ')g^{(\om)}(x)\] evaluated at $x=\al$. 
We have shown that the zero locus of 
$g^{(\om ')}(x)$ is contained in that of $\cD_x(\om ')g^{(\om)}(x)$.  
Since $g^{(\om ')}(x)$ is square-free,  this proves the assertion.
\end{proof}

\begin{lemma}   \label{technical_lemma_2}   
For any point $\omega \in W$, we have $\cD_x(\omega)g^{(\omega)}(x)=0$. 
\end{lemma}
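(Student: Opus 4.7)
The plan is to derive this as the special case $\omega'=\omega$ of Lemma~\ref{technical_lemma_1}, combined with a one-line degree comparison. Applying Lemma~\ref{technical_lemma_1} to the pair $(\omega,\omega)$, we get that $g^{(\omega)}(x)$ divides $\cD_x(\omega)g^{(\omega)}(x)$ in $K[x]$. On the other hand, since $\omega=(\omega_1:\cdots:\omega_n)$ has constant entries, the operator $\cD_x(\omega)=\sum_j \omega_j\,\partial/\partial x_j$ strictly lowers degree: either $\cD_x(\omega)g^{(\omega)}(x)=0$, or it is a homogeneous polynomial of degree $\deg g^{(\omega)}-1$. Any nonzero multiple of $g^{(\omega)}(x)$ has degree at least $\deg g^{(\omega)}(x)$, so the divisibility forces $\cD_x(\omega)g^{(\omega)}(x)=0$, which is the claim.

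The only point worth checking is that $g^{(\omega)}(x)$ is actually a non-constant form, so that the degree argument is not vacuous. Under the running hypothesis $\dim W=1$ and $n\geq 4$, the generic fiber $Z^{-1}(\omega)$ is a proper hypersurface in $\PP^{n-1}(x)$ of dimension $n-2\geq 2$; in particular $g^{(\omega)}(x)$ has positive degree and the inequality $\deg g^{(\omega)}-1<\deg g^{(\omega)}$ is strict. There is no other obstacle: the entire content is the degree drop under a constant-coefficient operator combined with the divisibility already provided by Lemma~\ref{technical_lemma_1}.
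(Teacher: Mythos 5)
Your proof is correct and is essentially the paper's own argument: the paper likewise sets $\omega'=\omega$ in Lemma~\ref{technical_lemma_1} to get $\cD(\omega)g^{(\omega)}(x)=r(x)g^{(\omega)}(x)$ and then concludes ``for the degree reason.'' You have simply spelled out that degree reason (a constant-coefficient first-order operator drops degree by one, while a nonzero multiple of $g^{(\omega)}$ has degree at least $\deg g^{(\omega)}$), which is a faithful expansion rather than a different route.
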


\begin{proof}
In the previous lemma let $\om ' = \om$.  Then we have proved that $\cD_x(\omega)g^{(\omega)}(x)=r(x) g^{(\omega)}(x)$ 
for some $r(x) \in R$. For the degree reason, we get the assertion.
\end{proof}

\begin{proof}[Proof of Proposition~\ref{main_proposition_2}.] 
By Lemma~\ref{technical_lemma_2}, we have 
\[\cD_x(\omega) \left(\cD_x(\omega ')g^{(\omega)}(x)\right)
=\cD_x(\omega ')\left( \cD_x(\omega)g^{(\omega)}(x)  \right)=0.\]
By Lemma~\ref{technical_lemma_1}, we may write  $\cD_x(\omega ')g^{(\omega)}(x)=r(x)g^{(\om ')}(x)$
for some $r(x)$.  Since 
\[\cD_x(\omega)\left( r(x)g^{(\omega ')} (x) \right) =0,    \]
$r(x)g^{(\omega ')}(x) \in \sol(\omega; R)$.  Therefore, since a constant vector is a 
self-vanishing system, we have $g^{(\omega ')}(x) \in \sol(\omega; R)$ by  Corollary~\ref{any_factor_is_in_sol}. 
\end{proof}

\begin{proof}[Proof of Theorem~\ref{2nd_main_thm_in_the_case_dim_W=1}.]
We have to show that $h_j(x) \in \sol(L(W); R)$.  
We may assume that $h_j \neq 0$. 
Let $W_j$ be the intersection of $W$ and the hyperplane defined by $y_j=0$ in the space $\PP ^{n-1}(y)$, 
and let  $H_j \subset \PP ^{n-1}(x)$ be the hypersurface defined by $h_j(x)=0$.  
Then $W_j$ is a finite set, since $\dim \; W=1$ and $W$ is not contained in the 
above hyperplane.
Thus,  if we write   
$W_j=\{\omega ^{(1)}, \omega ^{(2)}, \ldots, \omega ^{(s)} \}$, then   
\[H_j \supset Z^{-1}(\om ^{(1)}) \cup Z^{-1}(\om ^{(2)}) \cup \cdots \cup Z^{-1}(\om ^{(s)}).\]

Conversely if $\al \in H_j \setminus T$, then $Z(\al) \in W_j$. 
Thus 
\[H_j \setminus T \subset Z^{-1}(\omega ^{(1)}) \cup Z^{-1}(\omega ^{(2)}) \cup \cdots \cup Z^{-1}(\omega ^{(s)}).  \] 
Since $H_j$ is purely of codimension $1$ and $T$ has codimension 
at least 2 (Proposition \ref{dimension_of_T}), it in fact shows that  
\[H_j  \subset  Z^{-1}(\omega ^{(1)}) \cup Z^{-1}(\omega ^{(2)}) \cup \cdots \cup Z^{-1}(\omega ^{(s)}).  \] 
Thus we have proved that up to a nonzero constant factor, the square-free part of 
$h_j(x)$ is equal to
\[
g^{(\omega ^{(1)})}(x) \times g^{(\omega ^{(2)} )}(x) \times \cdots \times g^{(\omega ^{(s)})}(x).\] 
By Proposition~\ref{main_proposition_2}, this completes the proof of Theorem~\ref{2nd_main_thm_in_the_case_dim_W=1}.  
\end{proof}

\section{Quaternary and quinary forms with zero Hessian }  

\begin{theorem}  \label{quaternary_form_with_zero_hessian}   
Let $f=f(x_1, x_2, x_3, x_4) \in R =  K[x_1, x_2, x_3, x_4]$ be  a form with zero Hessian. 
Then $f$ can be transformed into a form with three variables via a linear transformation of the variables $x_1, x_2, x_3, x_4$. 
\end{theorem}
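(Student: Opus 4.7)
The plan is to introduce the reduced self-vanishing system $\boldh=(h_1,\ldots,h_4)$ arising from $f$, together with the rational map $Z:\PP^3(x)\ra\PP^3(y)$, its image $W$, and its fundamental locus $T$ from the previous section. The goal is to exhibit a nonzero linear form in the ideal $\cI(f)\subset K[y]$, after which Proposition~\ref{basic_3} finishes.

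By Lemma~\ref{rank_of_hessian_matrix}, ${\rm Krull.dim}\;K[h_1,\ldots,h_4]\le n/2=2$, so $\dim W\le 1$. If $\dim W=0$, then Proposition~\ref{yamada's_corollary_2_page38} forces $\boldh$ to be a constant vector in $K^4\setminus\{0\}$, so the identity $\sum_j h_j\,\pa f/\pa x_j=0$ becomes a nontrivial $K$-linear dependence among the partial derivatives of $f$, placing the linear form $\sum_j h_jy_j$ into $\cI(f)$ and finishing this case.

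Suppose instead $\dim W=1$. Proposition~\ref{yamada's_proposition_3_page38} squeezes $\dim T$ between $n/2-1=1$ and $n-3=1$, so $\dim T=1$. Theorem~\ref{1st_main_thm_in_the_case_dim_W=1} then gives $i(L(W))\subset T$, hence $\dim L(W)\le 1$; combined with $W\subset L(W)$ and the irreducibility of $W$ (whose defining ideal in $K[y]$ is the prime ideal $\ker(y_j\mapsto h_j)$), this forces the closed set $W$ to coincide with the projective line $L(W)\subset\PP^3(y)$. Hence the homogeneous ideal $I(W)$ is generated by two linearly independent linear forms, which means $h_1,\ldots,h_4$ satisfy two independent $K$-linear relations. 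Consequently $s:=\dim_K\sum_j Kh_j=4-2=2$; note that replacing $\boldh$ by its unreduced counterpart (as used in Proposition~\ref{num_of_var_involved_2}) multiplies every $h_j$ by the common GCD and so does not alter $s$.

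Applying Proposition~\ref{num_of_var_involved_2} with this $s=2$, we can eliminate $n-s=2$ of the $y$-variables from $g(y)$, so after a linear change of the $y$-variables the minimal-degree relation $g\in\cI(f)$ is a homogeneous form in just two of them. Since $\boldh$ is non-constant we have $\deg g\ge 2$, and since $K$ is algebraically closed $g$ splits as a product of linear forms $g=\prod_iL_i$ with each $L_i\in K[y]_1$. Primality of $\cI(f)$ (the kernel of $K[y]\to K[x]$, $y_j\mapsto\pa f/\pa x_j$, which lands in a domain) then forces some $L_i$ into $\cI(f)$, so $\cI(f)$ contains a linear form and Proposition~\ref{basic_3} delivers the desired variable elimination from $f$. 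The substantive obstacle is the step that pins $W$ down as a line in $\PP^3(y)$, which is essentially the content of Theorem~\ref{1st_main_thm_in_the_case_dim_W=1}; after it, everything reduces to a dimension count plus the splitting of a homogeneous binary form over an algebraically closed field.
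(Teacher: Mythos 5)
Your argument is correct and follows the paper's own proof essentially step for step: the bound $\dim W \leq 1$ from Lemma~\ref{rank_of_hessian_matrix}, the containment $i(L(W)) \subset T$ together with $\dim T \leq 1$ to bound the linear span of $W$, the translation into linear relations among $h_1,\ldots,h_4$ so that $g$ reduces to a binary form, and the use of primality of $\cI(f)$ over the algebraically closed field $K$ to produce a linear form in $\cI(f)$, finishing with Proposition~\ref{basic_3}. The only cosmetic differences are that the paper phrases the endgame as ``$g$ is irreducible, hence linear'' rather than splitting $g$ into linear factors, and works directly with $s = \dim L(W)$ (normalizing $h_1 = h_2 = 0$) instead of identifying $W$ with the line $L(W)$.
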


\begin{proof}
Let $\boldh=(h_1, h_2, h_3, h_4)$ be a reduced self-vanishing system of forms arising from $f$.  
Then by Lemma~\ref{rank_of_hessian_matrix_corrected}, 
${\rm Krull\,dim} \;K[h_1, h_2, h_3, h_4] \leq n-2=2$. 
This shows $\dim \; W \leq 1$.  
Put $s=\dim \; L(W)$ (as a linear variety in $\PP ^{3}$).  Note that $s+1=\dim _K (Kh_1 + \cdots + Kh_4)$.  
Since $i(L(W)) \subset T$ by Theorem~\ref{1st_main_thm_in_the_case_dim_W=1} and $\dim \; T \leq 1$ 
by Proposition~\ref{yamada's_proposition_3_page38}, we have $s \leq 1$. 
If $s=0$, we get the result by Proposition~\ref{yamada's_corollary_2_page38}. 
If $s=1$, we may assume that $h_1(x)=h_2(x)=0$ by Lemmas~\ref{basic_001} and \ref{num_of_var_involved_1}.
Let $g(y) \in K[y_1, \ldots, y_4]$ be the polynomial through which $h_1, \ldots, h_4$ 
are defined (cf.\@ Definition~\ref{reduced_system_of_polynomials}). 
Then $h_1=h_2=0$ implies that $\frac{\pa g}{\pa y_1}=\frac{\pa g}{\pa y_2}=0$ by the minimality of the degree.   
(cf.\@ Proposition~\ref{num_of_var_involved_2} and Remark~\ref{non_vanishing_of_boldh}.)  
Hence  $g$ is a form only in two variables.  Since $g$ should be irreducible, $g$ is a linear form.  
This implies that there exists a linear relation among the partial derivatives of $f$.  Hence the 
assertion follows from Proposition~\ref{basic_3}.
\end{proof}

\begin{proposition}   \label{svs_for_forms_with_zero_hessian_in_5_variables}  
Let $f=f(x_1, x_2, x_3, x_4, x_5) \in R = K[x_1, x_2, x_3, x_4, x_5]$ be  a form with zero Hessian. 
Let $\boldh=(h_1, h_2, h_3, h_4, h_5)$ be a self-vanishing system of polynomials associated to $f$ 
as defined in Definition~\ref{reduced_system_of_polynomials}.   
Assume that there exists no linear relations among the partial derivatives of $f$.  
Then by a linear change of variables,  $f$ can be transformed into a form so that  
$h_1=h_2=0$, and $h_3, h_4, h_5$ are polynomials only in $x_1, x_2$.
\end{proposition}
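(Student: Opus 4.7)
The strategy parallels the proof of Theorem~\ref{quaternary_form_with_zero_hessian}. First, I would establish that $\dim\,W = 1$. Because the partial derivatives of $f$ are linearly independent, the minimal-degree generator $g \in \cI(f)$ has $\deg g \geq 2$, so $\boldh$ is non-constant; hence $\dim\,W \geq 1$ by Proposition~\ref{yamada's_corollary_2_page38}. On the other hand, Lemma~\ref{rank_of_hessian_matrix} yields $\mbox{Krull.dim}\,K[h_1,\ldots,h_5] \leq 5/2$, whence $\dim\,W \leq 1$. Thus $\dim\,W = 1$, and the apparatus of the previous section applies.

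Next, set $s := \dim\,L(W)$, so that $s+1 = \dim_K(Kh_1 + \cdots + Kh_5)$. Clearly $s \geq 1$ since $W \subset L(W)$. By Theorem~\ref{1st_main_thm_in_the_case_dim_W=1}, $i(L(W)) \subset T$, and by Proposition~\ref{yamada's_proposition_3_page38}, $\dim\,T \leq n-3 = 2$; hence $s \leq 2$. I would exclude $s = 1$ exactly as in the $n=4$ proof: after a linear change of variables (Lemmas~\ref{basic_001} and~\ref{num_of_var_involved_1}) one may arrange $h_1 = h_2 = h_3 = 0$, and minimality of $\deg g$ then forces $\pa g/\pa y_j = 0$ for $j = 1,2,3$. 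Hence $g \in K[y_4, y_5]$; because $\cI(f)$ is prime, $g$ is irreducible, and an irreducible homogeneous form in two variables over the algebraically closed field $K$ is linear --- yielding a linear relation among $f_4$ and $f_5$ and contradicting the hypothesis. Therefore $s = 2$.

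For the remaining case $s = 2$, a further linear change of variables places us in coordinates where $h_1 = h_2 = 0$ while $h_3, h_4, h_5$ remain linearly independent. Then $W \subset \{y_1 = y_2 = 0\}$ (a projective $2$-plane), and the dimension count forces $L(W) = \{y_1 = y_2 = 0\}$; hence $i(L(W)) = \{x_1 = x_2 = 0\} \subset \PP^4(x)$. Theorem~\ref{2nd_main_thm_in_the_case_dim_W=1} gives $h_j \in \sol(i(L(W));R)$ for each $j$, and feeding in the three points $(0,0,1,0,0)$, $(0,0,0,1,0)$, $(0,0,0,0,1) \in i(L(W))$ yields $\pa h_j/\pa x_k = 0$ for $k = 3,4,5$. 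Hence every $h_j$ lies in $K[x_1,x_2]$, proving the assertion. The delicate step is the bookkeeping for the simultaneous linear change of variables in $x$ and (correspondingly) $y$: one must check that after the change the transformed system $\boldh$ remains the reduced self-vanishing system arising from the transformed $f$ in the sense of Definition~\ref{reduced_system_of_polynomials}, so that the theorems of Section~6 apply anew in the new coordinates.
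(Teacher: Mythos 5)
Your proposal is correct and follows essentially the same route as the paper: bound $\dim W$ by Lemma~\ref{rank_of_hessian_matrix}, bound $s=\dim L(W)$ via $i(L(W))\subset T$ and Proposition~\ref{yamada's_proposition_3_page38}, eliminate $s\le 1$ using the no-linear-relation hypothesis exactly as in the quaternary case, and conclude in the case $s=2$ from Theorem~\ref{2nd_main_thm_in_the_case_dim_W=1} that $h_3,h_4,h_5\in\sol(i(L(W));R)=K[x_1,x_2]$. You supply slightly more detail than the paper (e.g.\ explicitly evaluating the differential operators at the coordinate points of $i(L(W))$, and flagging the compatibility of the coordinate change with Definition~\ref{reduced_system_of_polynomials}, which the paper leaves implicit), but the argument is the same.
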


\begin{proof}
Let $Z:\PP ^{4}(x) \rightarrow \PP ^4(y)$ be the rational map defined by $y_j = h_j(x)$ and 
let $T$ be the fundamental locus and  $W$  the image of $Z$. 
\emph{We will prove that $\dim \; W \leq 1$ in the next section.}
If  $\dim \; W=0$, then
a variable can be eliminated from $f$, since $\boldh$ is a constant vector by 
Proposition~\ref{yamada's_corollary_2_page38}. 
Assume that $\dim\; W=1$.  
Then we have  $i(L(W)) \subset T$ by Theorem~\ref{1st_main_thm_in_the_case_dim_W=1}. 
On the other hand we have  $\dim \;T \leq 2$ by Proposition~\ref{yamada's_proposition_3_page38}. 
This shows $\dim \; L(W) \leq 2$ as a linear subspace in $\PP^{4}$.  
Let $s=\dim \; L(W)$ or equivalently  $s+1=\dim _K\sum _{j=1}^5Kh_j$.  
Since $\dim \; W =1$, $s \geq 1$. 
If $s=1$, we may assume $h_1=h_2=h_3=0$.  As in the proof of Theorem~\ref{quaternary_form_with_zero_hessian}
this would imply that there exist a linear relation among the partial derivatives of $f$. 
Since we have excluded this case,  we are left with the case  $s=2$. 
Then we may assume that  
$h_1=h_2=0$ and $h_j \in \sol(i(L(W)); R)$ for $j=3, 4, 5$. 
The linear subspace  $i(L(W))$  consists of vectors 
$(0,0, \ast, \ast, \ast)$.  Hence $h_3, h_4, h_5$ should be polynomials 
only in $x_1$ and $x_2$. 
\end{proof}

\begin{theorem} \label{thm45}
  Let $R=K[x_1, x_2, x_3, x_4, x_5]$, and let $\Delta$ be a homogeneous polynomial
of the form
$$
\Delta = p_3(x_1,x_2) x_3 + p_4(x_1,x_2) x_4 + p_5(x_1,x_2) x_5
$$
Then any element in the algebra \(K[x_1, x_2][\Delta]\) is a polynomial with zero Hessian.

Conversely, 
let $f$ be a homogeneous form in five variables with zero Hessian and assume that $f$ properly 
involves five variables. 
Then we can choose $\Delta$ such that $f$ can be transformed into a homogeneous polynomial 
in the algebra \(K[x_1, x_2][\Delta]\) by means of a linear change of variables.  
\end{theorem}

\begin{proof}
Put $\boldf=(f_1, \ldots, f_5)$, where $f_j=\frac{\pa f}{\pa x_j}$.  
Suppose first that $f \in K[x_1,x_2][\Delta]$.    Assume without loss of generality that
$f_5 \ne 0$. Then $f_3/f_5 = p_3/p_5 \in K(x_1/x_2)$ and 
$f_4/f_5 = p_4/p_5 \in K(x_1/x_2)$, so $f_3,f_4,f_5 \in K(x_1/x_2,f_5)$.
Hence $\trdeg_K K(f_3,f_4,f_5) \le 2$ and 
$\trdeg_K K(f_1,f_2,f_3,f_4,f_5) \le 4$.
On account of Proposition \ref{basic_1}, $f$ has zero Hessian.

Suppose next that $f$ has zero Hessian. On account of Proposition~\ref{basic_2},
$f_1,f_2,f_3,f_4,f_5$ are algebraically dependent over $K$.
Put $\boldf=(f_1, \ldots, f_5)$, where $f_j=\frac{\pa f}{\pa x_j}$.  
Let $\boldh=(h_1, \ldots, h_5)$ be a reduced 
self-vanishing system arising from $f(x)$.  We have proved that 
$\boldh \cdot \frac{\pa}{\pa x_j}\boldf=0,  j=1,\ldots, 5$ in Corollary~\ref{duality}. 
This shows that $f(x) \in \sol(\frac{\pa}{\pa x_1}\boldh, \ldots, \frac{\pa}{\pa x_5}\boldh; R)$. 
By Proposition~\ref{svs_for_forms_with_zero_hessian_in_5_variables}, we may assume that 
$h_1=h_2=0$ and $h_3, h_4, h_5$ involve only $x_1, x_2$. Let $\tilde{K}$ be the algebraic
closure of $K(x_1,x_2)$. It follows from
Theorem \ref{subring_by_generated_by_determinates} that $f$ is a polynomial
over $\tilde{K}$ in
$$
\tilde{A}' := 
\left| \begin{array}{ccc}
\frac{\pa h_3}{\pa x_1} & \frac{\pa h_4}{\pa x_1} & \frac{\pa h_5}{\pa x_1} \\[5pt]
\frac{\pa h_3}{\pa x_2} & \frac{\pa h_4}{\pa x_2} & \frac{\pa h_5}{\pa x_2} \\[4pt]
\hline
x_3 & x_4 & x_5
\end{array} \right|. 
$$
Notice that $\tilde{A}' \in R$ has only one irreducible factor which is \emph{not} 
contained in $K[x_1,x_2]$. Furthermore, we can choose $\Delta$ to be this factor.
Then $f \in \tilde{K}[\delta]$, say 
$$
f = b_0 + b_1 \Delta + b_2 \Delta^2 + \cdots .
$$
Then the coefficient of $x_j^i$ of $f$ as a polynomial in $K[x_1,x_2][x_3,x_4,x_5]$
equals $b_i p_j^i$, for $j=3,4,5$. So $b_i \in K(x_1,x_2)$.
Since ${\rm GCD}(p_3^i,p_4^i,p_5^i) = 1$, we infer that $b_i \in K[x_1,x_2]$.
This holds for all $i$, so $f \in K[x_1,x_2][\Delta]$.
\end{proof}

If $\Delta$ is cubic, then we can take $\Delta = A'$. But if $\Delta$ has larger degree,
then this is not always possible. Take e.g.\@ $\Delta$ irreducible of even degree, then 
we cannot take $\Delta = A'$, because $A'$ has always odd degree.
It is however possible to write $\Delta$ in the same way as $A'$, but with 
the $h^{(j)}_i$ replaced by other polynomials in $K[x_1,x_2]$.

\begin{theorem} \label{Delta_syzygy_determinant}
Let $n \geq 4$. 
Let $\Delta \in K[x_1,x_2,\ldots,x_n]$ be a homogeneous polynomial of 
degree at least $2$ of the form
$$
\Delta = p_3(x_1,x_2) x_3 + p_4(x_1,x_2) x_4 + \cdots + p_n(x_1,x_2) x_n
$$
Then there are polynomials $a^{(j)}_i$ in $K[x_1,x_2]$, 
such that $\bolda^{(j)} = (a^{(j)}_3,a^{(j)}_4,\ldots,a^{(j)}_n)$ 
is homogeneous for all $3 \le j \le n - 1$, and
$$
\Delta = \left| \begin{array}{cccc}
a^{(3)}_3 & a^{(3)}_4 & \cdots & a^{(3)}_n \\
a^{(4)}_3 & a^{(4)}_4 & \cdots & a^{(4)}_n \\
\vdots & \vdots & & \vdots \\
a^{(n-1)}_3 & a^{(n-1)}_4 & \cdots & a^{(n-1)}_n \\
\hline
x_3 & x_4 & \cdots & x_n
\end{array} \right|. 
$$ 
\end{theorem}

\begin{proof}
If theorem \ref{Delta_syzygy_determinant} holds for 
$\Delta\big(x_1,x_2,A(x_3,x_4,\ldots,x_n)\big)$ for some $A \in \GL(n-2,K)$
instead of $\Delta$ itself, then we can substitute 
$(x_3,x_4,\ldots,x_n)$ by $A^{-1}(x_3,x_4,\ldots,x_n)$ in the matrix with 
determinant $\Delta\big(x_1,x_2,A(x_3,x_4,\ldots,x_n)\big)$,
to obtain a matrix with determinant $\Delta$. Without affecting its determinant,
we can get the last matrix of the form of theorem \ref{Delta_syzygy_determinant} 
by way of column operations and multiplying the first row with a nonzero constant. 
This allows us to replace $\big(p_3(x_1,x_2),p_4(x_1,x_2),\ldots,p_n(x_1,x_2)\big)$
by $\big(p_3(x_1,x_2),
\ldots,p_n(x_1,x_2)\big)\,A$ 
for any $A \in \GL(n-2,K)$.

Suppose first that $p_3(x_1,x_2),p_4(x_1,x_2),\ldots,p_n(x_1,x_2)$
are linearly dependent over $K$. Then we may assume that 
$p_n(x_1,x_2) = 0$. If $n = 4$, then we take $a^{(3)}_3 = 0$ and 
$a^{(3)}_4 = -p_3(x_1,x_2)$. So assume that $n \ge 5$. Then we take
$a^{(n-1)}_i = 0$ for all $3 \le i < n$ and $a^{(n-1)}_n = -1$. 
Furthermore, we take $a^{(j)}_n = 0$ for all $3 \le j < n-1$.
By induction on $n$, there exist $a^{(j)}_i$ as claimed.

Suppose next that $p_3(x_1,x_2),p_4(x_1,x_2),\ldots,p_n(x_1,x_2)$
are linearly independent over $K$. Take $\alpha_3 \in K$, such that
$p_3(\alpha _3,1) \ne 0$, and assume without loss of generality that
$p_i(\alpha _3,1) = 0$ for all $i \ne 3$. Take $\alpha_4 \in K$, such that
$p_4(\alpha _4,1) \ne 0$, and assume without loss of generality that
$p_i(\alpha _4,1) = 0$ for all $i \ne 4$. Do the same with
$\alpha_5, \alpha_6, \ldots, \alpha_n$, and let
$$
\sigma := (x_1 - \alpha_3 x_2) (x_1 - \alpha_4 x_2) \cdots (x_1 - \alpha_n x_2). 
$$
Then
$$
\Delta' := \frac{(x_1 - \alpha_3 x_2) \,p_3}{\sigma} \,x_3 + 
\frac{(x_1 - \alpha_4 x_2) \,p_4}{\sigma} \,x_4 + \cdots + 
\frac{(x_1 - \alpha_n x_2) \,p_n}{\sigma} \,x_n
$$
is a polynomial, and by induction on the degree, we have
$$
\Delta' = \left| \begin{array}{cccc}
a^{(3)}_3 & a^{(3)}_4 & \cdots & a^{(3)}_n \\
a^{(4)}_3 & a^{(4)}_4 & \cdots & a^{(4)}_n \\
\vdots & \vdots & & \vdots \\
a^{(n-1)}_3 & a^{(n-1)}_4 & \cdots & a^{(n-1)}_n \\
\hline
x_3 & x_4 & \cdots & x_n
\end{array} \right|,
$$
with the above properties on the $a^{(j)}_i$. Consequently,
$$
\Delta = \left| \begin{array}{cccc}
(x_1-\alpha_3 x_2) \,a^{(3)}_3 & (x_1-\alpha_4 x_2) \,a^{(3)}_4 & \cdots & (x_1-\alpha_n x_2) \,a^{(3)}_n \\
(x_1-\alpha_3 x_2) \,a^{(4)}_3 & (x_1-\alpha_4 x_2) \,a^{(4)}_4 & \cdots & (x_1-\alpha_n x_2) \,a^{(4)}_n \\
\vdots & \vdots & & \vdots \\
(x_1-\alpha_3 x_2) \,a^{(n-1)}_3 & (x_1-\alpha_4 x_2) \,a^{(n-1)}_4 & \cdots & (x_1-\alpha_n x_2) \,a^{(n-1)}_n \\
\hline
x_3 & x_4 & \cdots & x_n
\end{array} \right|.
$$
This is essentially equivalent to Hilbert-Burch Theorem. The interested reader may wish to see \cite{eisenbud}~Theorem~20.15.
\end{proof}

\begin{remark} \label{rem_by_H-Nasu} 
Hirokazu Nasu showed  that the variety $X=(\Delta =0) \subset \PP ^4$ is isomorphic to 
the projection of the Segre variety $S \subset  \PP ^5$ of degree three from a general 
point outside of $S$. Here $S \subset \PP^5$ is defined as the image of the Segre embedding 
\[   \PP ^1 \times \PP ^2 \hookrightarrow \PP ^5. \]
This is the locus of the maximal minors of a generic 
$2 \times  3$ matrix. 

It is easy to see that  any degree three homogeneous polynomial $F \in K[x_1,x_2,\Delta]$ in 
Theorem~\ref{thm45} 
which properly involves five variables
can be transformed into the canonical form  $x_1^2\,x_3+ x_1x_2\,x_4+ x_2^2\,x_5$ by means of a 
linear transformation of the variables.  
This form  is also known as the Macaulay dual of the 
trivial extension of the algebra $K[x_1, x_2]/ (x_1,x_2)^3$ by the canonical module.  
\end{remark} 


\section{Proof of {\mathversion{bold}$\dim \; W \leq 1$ in the proof of 
Proposition \ref{svs_for_forms_with_zero_hessian_in_5_variables}}}

We first formulate a result about homogeneous self-vanishing systems in dimension $5$
in general.

\begin{theorem} \label{h}
Let $\bm{h} = (h_1,h_2,h_3,h_4,h_5)$ be a homogeneous self-vanishing system,
for which 
$$
\rk \Big(\frac{\partial h_i}{\partial x_j}\Big) = \trdeg_K K(\bm{h}) = \krdim K[\bm{h}] \ge 3.
$$
Let $Z: \PP^4 \dashrightarrow \PP^4$ be the rational map, defined by
$$
x = (x_1 : \cdots : x_5) \mapsto (h_1 : \cdots : h_5),
$$
and let $W$ be the closure of the image of $Z$.

If $\dim\, L(W) > 1$, then one of the following holds.
\begin{enumerate}

\item[\upshape(1)] $\dim\, L(W) = 3$ (where $L(W)$ is the $K$-linear span of $W$ as a variety in $\PP^4$).

\item[\upshape(2)] $W$ has a vertex, i.e., a point $p \in \PP^4$ such that $W$ is a union
of lines through $p$.

\end{enumerate}
\end{theorem}

Using Theorem \ref{h}, we can prove that $\dim W \le 1$. Cases (1) and (2) are 
covered by Lemmas \ref{A} and \ref{B} respectively. Lemma \ref{A} and its 
proof are essentially from \cite[p.\@ 567]{GNzzz}. Lemma \ref{B} is proved on 
\cite[p.\@ 568]{GNzzz}, and we think the proof is correct, but it could use some 
justification. For that reason, we formulated an alternative proof. Both 
Lemma \ref{A} and Lemma \ref{B} come from the second author's paper \cite{hmgqt5}.

\begin{lemma} \label{A}
Suppose that $f \in K[x_1,x_2,x_3,x_4,x_5]$ is homogeneous with zero Hessian, 
and let $\bm{h}$ be a reduced self-vanishing system arising from $f$. Then 
$\bm{h}$ is not as in {\upshape(1)} of Theorem \ref{h}.
\end{lemma}

\begin{proof}
Suppose that $\bm{h}$ is as in {\upshape(1)} of Theorem \ref{h}. Then
$\dim\, L(W) = 3$, so we may assume that the last coordinate of every point of 
$W$ is zero. Develop $f$ into powers of $x_5$, and write
$$
f = x_5^{\mu} A + x_5^{\mu+1} B,
$$
where $A \in K[x_1,x_2,x_3,x_4]$  and $B \in K[x_1,x_2,x_3,x_4,x_5]$.
Take $g$ as in Definition 4.4. Then
$$
g\Big(\parder{f}{x_1},\parder{f}{x_2},\parder{f}{x_3},\parder{f}{x_4},\parder{f}{x_5}\Big) = 0.
$$
Since $g$ has minimum degree as such, $\parder{g}{y_5} = 0$. 
Hence $g \in K[y_1,y_2,y_3,y_4]$. If we look at the trailing coefficient 
with respect to $x_5$, we see that 
\begin{equation} \label{gA}
g\Big(\parder{A}{x_1},\parder{A}{x_2},\parder{A}{x_3},\parder{A}{x_4}\Big), 
\end{equation}
i.e., $A \in K[x_1,x_2,x_3,x_4]$ has Hessian determinant zero (in dimension $4$). 
From Theorem \ref{quaternary_form_with_zero_hessian}, it follows that we may assume 
that $A \in K[x_1,x_2,x_3]$.

If $y_4 \mid g$, then $g = y_4$ because $g$ has minimum degree, so $\deg\, \bm{h} = 0$.
Consequently, $y_4 \nmid g$.
It follows from $A \in K[x_1,x_2,x_3]$ and 
\eqref{gA} that $A \in K[x_1,x_2,x_3]$ has Hessian determinant zero (in dimension $3$).
From Theorem \ref{ternary_form_with_zero_hessian_is_trivial}, 
it follows that we may assume that $A \in K[x_1,x_2]$.

Using Theorem \ref{duality} (a)  
and Theorem \ref{mainthm_1}, 
we deduce that 
$f\big(x +t\,\bm{h}(x)\big) = f(x)$. As $h_5 = 0$, we have
$$
A\big(x + t\,\bm{h}(x)\big) x_5^{\mu} + B\big(x + t\,\bm{h}(x)\big) x_5^{\mu+1} 
= x_5^{\mu} A(x) + x_5^{\mu+1} B(x).
$$
If we look at the leading coefficient with respect to $t$, we see that 
$A(\bm{h}(x)) = 0$. Since $A \in K[x_1,x_2]$, we see that $h_1$ and $h_2$ are algebraically
dependent over $K$. Consequently, $h_1$ and $h_2$ are linearly dependent
over $K$, say that $h_1 = 0$. Then $h_1 = h_5 = 0$, so $\dim\, L(W) \le 2$. 
Contradiction.
\end{proof}

\begin{lemma} \label{B}
Suppose that $f \in K[x] = K[x_1,x_2,\ldots,x_n]$ is homogeneous with zero Hessian, 
and let $\bm{h}$ be a reduced self-vanishing system arising from $f$. Suppose that 
$p$ is a vertex of $W$. Then $f \in \sol(p; K[x])$. In particular, a variable can be 
eliminated from $f$ by way of a linear transformation of the variables.
\end{lemma}

\begin{proof}
Write $p = (p_1 : p_2 : \cdots : p_n)$ and take $d := \deg\, \boldh$.
Take $\tilde{\boldh} = \boldh + x_{n+1}^d p$. Since $p$ is a vertex of $W$, 
we infer that the image of 
$$
\tilde{Z} : \PP^n(x) \rightarrow \PP^{n-1}(y),
$$
defined by $(x_1 : x_2 : \cdots : x_n : x_{n+1}) \mapsto 
(\tilde{h}_1 : \tilde{h}_2 : \cdots : \tilde{h}_n)$, is equal to $W$.
Hence it follows from Lemma~\ref{rank_of_hessian_matrix_corrected} that  
$$
\rk\, (\pa \tilde{h}_i / \pa x_j) = \krdim K[\tilde{\boldh}] = 
\dim\, W + 1 = \krdim K[\boldh] = \rk\, (\pa h_i / \pa x_j).
$$
Since $(\pa \tilde{h}_i / \pa x_j)$ can be obtained from $(\pa h_i / \pa x_j)$
by adding $\pa \tilde{\boldh} / \pa x_{n+1}$ as a column to the right hand side,
we infer from $\rk\, (\pa \tilde{h}_i / \pa x_j) = \rk\, (\pa h_i / \pa x_j)$ that
$\pa \tilde{\boldh} / \pa x_{n+1} = d\,x_{n+1}^{d-1}\,p$ is contained in the column 
space of $(\pa h_i / \pa x_j)$. 

Hence $p$ is dependent over $K(x)$ on
$\pa \boldh / \pa x_1, \pa \boldh / \pa x_2, \ldots, \pa \boldh / \pa x_n$. 
From Theorem \ref{duality} (c), we infer that $f \in \sol(p; K[x])$.
\end{proof}

\emph{The rest of this section will be devoted to the proof of Theorem \ref{h}.}

Suppose that $\dim\, W > 1$, and let $T$ be the fundamental locus of $Z$ in $\PP^4(x)$, 
defined by the equations $h_1(x) = h_2(x) = \cdots = h_5(x) = 0$. 
From Theorem \ref{mainthm_1} (c), it follows that $\bm{h}(x+t\bm{h}(x)) = \bm{h}(x)$. 
If we look at the leading coefficient with respect to $t$, we see that 
\begin{equation} \label{hoh}
\bm{h}(\bm{h}(x)) = 0, \qquad \mbox{i.e.,} \qquad W \subseteq T.
\end{equation}
From Proposition \ref{dimension_of_T}, it follows that $\dim\, T \le 2$. Since 
$\dim\, W \ge 2$, \eqref{hoh} tells us that 
$$
\dim\, W = \dim\, T = 2.
$$
As $W$ is the closure of the image of $Z$, we see that $W$ is a component of $T$.

On \cite[p.\@ 565]{GNzzz}, the authors claim that for every $c \in T$, there exists a
$p \in W$ such that the line through $c$ and $p$ is contained in $T$. 
But if $c = p$, then `the line through $c$ and $p$' shrinks to a single point, so
it must be shown that $p$ can be taken different from $c$. The following lemma
can be used for that.

\begin{lemma} \label{X}
Let $\bm{h} = (h_1, h_2, \ldots,h_n)$ be a homogeneous self-vanishing system. 
Let $S$ be a hyperplane in $\PP^{n-1}$. Then every irreducible component of $Z^{-1}(S)$
contains $W$ and has dimension $n-2$.
\end{lemma}

\begin{proof}
We can take $\bm{\alpha} = (\alpha_1,\alpha_2,\ldots,\alpha_n)$,
such that
$$
S = \big\{\,(\sigma_1 : \sigma_2 : \cdots : \sigma_n)\,\big|\,
\alpha_1 \sigma_1 + \alpha_2 \sigma_2 + \cdots + \alpha_n \sigma_n = 0\,\}.
$$
Now $Z^{-1}(S)$ is contained in
$$
\big\{\,(\tau_1 : \tau_2 : \cdots : \tau_n)\,\big|\,
\alpha_1 h_1(\bm{\tau}) + \alpha_2 h_2(\bm{\tau}) + \cdots + \alpha_n h_n(\bm{\tau}) = 0\,\},
$$
where $\bm{\tau} = (\tau_1,\tau_2,\ldots,\tau_n)$, and any irreducible component $X$ of $Z^{-1}(S)$ is of the form 
$$
\big\{\,(\tau_1 : \tau_2 : \cdots : \tau_n)\,\big|\,f(\bm{\tau}) = 0\,\},
$$
where $f$ is an irreducible factor of 
$\alpha_1 h_1 + \alpha_2 h_2 + \cdots + \alpha_n h_n$. So $\dim\, X = n-2$.

It suffices to show that $f(p) = 0$ for every $p$ in the image of $Z$
(i.e., skip the points of $W$ which were added by taking closure).
So let $p$ be an image point of $W$. From Lemma 
\ref{rank_of_hessian_matrix_corrected}, it follows
that
\[
\rk \Big(\parder{h_i}{x_j}\Big) = \krdim K[\bm{h}] \le n-1.
\]
On account of the fiber dimension theorem, the closure of $Z^{-1}(p)$ 
has dimension at least $1$. As $\dim\, X = n-2$,
the intersection of $X$ and the closure of $Z^{-1}(p)$ is nonempty, say that
$\theta$ is contained in this intersection.

If $\theta \in Z^{-1}(p)$, then $p=\bm{h}(\theta)$, and from Theorem~\ref{mainthm_1}
we infer that $\bm{h}(\bm{\theta}+t\bm{p}) = \bm{h}(\bm{\theta})$, where $\bm{\theta}$ and 
$\bm{p}$ are vectors over $K$ which correspond to $\theta$ and $p$ respectively. 
The last equality holds in general as well because $\theta$ is contained in the closure of $Z^{-1}(p)$.  
Hence
$$
\deg_t f(\bm{\theta}+t\bm{p}) \le 
\deg_t \big(\alpha_1 h_1(\bm{\theta}+t\bm{p}) + \alpha_2 h_2(\bm{\theta}+t\bm{p}) + \cdots + 
\alpha_n h_n(\bm{\theta}+t\bm{p})\big) = 0.
$$
So $f(\bm{\theta}+t\bm{p}) = f(\bm{\theta})$. From $\theta \in X$, it follows that 
$f(\bm{\theta}+t\bm{p}) = f(\bm{\theta}) = 0$. If we look at the leading coefficient 
with respect to $t$, we see that $f(\bm{p}) = 0$, which completes the proof.
\end{proof}

\begin{corollary} \label{Y}
Let $n \ge 3$ and $\bm{h} = (h_1, h_2, \ldots,h_n)$ be a homogeneous self-vanishing system.
Let $S$ be a hyperplane in $\PP^{n-1}$, and $Y$ be a component of $S \cap W$.
Assume that $Y$ contains an image point of $Z$. 
Then for every $c \in W$, there exists a $p \in Y$ such that the line
through $c$ and $p$ is contained in $T$.
\end{corollary}

\begin{proof}
  Let $X$ be any component of the closure of $Z^{-1}(Y)$.
  For every $c \in X$
for which $\bm{h}(c) \ne 0$, there exists a $p \in Y$ such that 
$\bm{h}(\bm{c}+t\bm{p}) = \bm{h}(\bm{c})$, where $\bm{c}$ and $\bm{p}$ are
vectors over $K$ which correspond to $c$ and $p$ respectively. 
To see this just set $p = \bm{h}(c)$ and use Theorem~\ref{mainthm_1}. 

Let
$$
U = \{ (c,p) \in X \times Y | \bm{h}(\bm{c}+t\bm{p}) = \bm{h}(\bm{c}) \}. 
$$
Since $Y$ is a complete variety, the projection of $U$ on $X$ is a closed
morphism. Hence
$$
\tilde{X} = \{ c \in X |\,\mbox{there is a $p \in Y$ such that 
$\bm{h}(\bm{c}+t\bm{p}) = \bm{h}(\bm{c})$}\,\}
$$ is closed. The $c \in X$
for which $\bm{h}(c) \ne 0$ form a dense subset of $X$ and are contained in 
$\tilde{X}$, so $\tilde{X} = X$. 

From lemma \ref{X}, it follows that $X$ contains $W$, so 
$W \subseteq \tilde{X}$. So for every $c \in W$, there exists a 
$p \in Y$ such that 
$\bm{h}(\bm{c}+t\bm{p}) = \bm{h}(\bm{c}) = 0$, i.e.,
the line through $c$ and $p$ is contained in $T$.
\end{proof}

So let us take $c \in W$. It is possible to take  $S$ such that $c \notin S$ and $S$ contains an image point of $Z$.
From corollary \ref{Y}, it follows that there exist a $c' \in S$ such that  
the line through $c$ and $c'$ is contained in $T$. As $c \notin S$ and $c' \in S$, 
we see that `the line through $c$ and $c'$' does not shrink to a single point.

Since $W$ is a component of $T$, the interior $W^{\circ}$ of $W$
as a subspace of $T$ is nonempty. Now take $p \in W^{\circ}$ in the image of $Z$.
There exists a $p' \in W$ such that $p' \ne p$ and such that the line $L_p$ through $p$
and $p'$ is contained in $T$. But since $W$ is the only component of $T$ which contains
$p$, it follows that $L_p \subseteq W$. 

Taking $Y = L_p$ in corollary \ref{Y}, we deduce that for every $q \in W$,
there exists a $q' \in L_p$ such that the line $L_q$ through $q$ and $q'$ 
is contained in $T$. This is also claimed on \cite[p.\@ 565]{GNzzz}. If we take
$q \in W^{\circ}$, then $L_q$ is even contained in $W$.

Now let us fix $L_p$, and range $q$ over $W^{\circ}$. There are infinitely
many points $q \in W^{\circ}$, but this does not mean automatically that 
there are infinitely many lines $L_q$. This is because for a line $L_q$, 
there may be infinitely many candidates for the point $q$. 

However, since $\dim\, W = 2$, there are infinitely many lines $L_q$ indeed,
just as claimed on \cite[p.\@ 565]{GNzzz}. On \cite[pp.\@ 565, 566]{GNzzz}, the following two 
cases (a) and (b) are distinguished:
\begin{enumerate}

\item[(a)] The set of points $q' \in L_p$ (which we get by ranging $q$ over
$W^{\circ}$) is infinite.

\item[(b)] There exists a fixed point $q' \in L_p$, which is contained in infinitely
many lines $L_q$.

\end{enumerate}
We will treat these cases in essentially the same way as on \cite[pp.\@ 566, 568]{GNzzz}.

Assume first that case (b) above applies. 
Then one can show that the closure of the union of lines $L_q$ through $q'$ 
has dimension $2$. As $W$ is irreducible of dimension $2$,
$W$ is just the closure of this union, so $W$ is a union of lines through $q'$. 
Thus case (b) corresponds to case (2) of Theorem \ref{h}, as claimed on \cite[p.\@ 568]{GNzzz}.

Assume next that case (a) above applies. Take $r \in W^{\circ}$. Then there exists an $r' \in L_p$ 
such that the line $L_r$ through $r$ and $r'$ is contained in $W$. 

Now let us fix $L_q$ as well, by choosing $q$ in $W^{\circ}$ in the image of $Z$ outside $L_p$.  Range $r$ over $W^{\circ}$. 
On \cite[p.\@ 566]{GNzzz}, two subcases of case (a) are distinguished, which are essentially as follows:
\begin{enumerate}

\item[(a1)] There does not exist a line $L_r$ as above, such that $L_r \cap L_q = \varnothing$.

\item[(a2)] There does exist a line $L_r$ as above, such that $L_r \cap L_q = \varnothing$.

\end{enumerate}
Both cases are treated essentially as follows on \cite[p.\@ 566]{GNzzz}.

Assume first that case (a1) above applies. 
Then one can show that the linear span of $p$, $p'$ and $q$ contains
infinitely many lines $L_r$, and that the closure of these lines has dimension $2$.
As $W$ is irreducible of dimension $2$, $W$ is just the closure of these lines, 
which corresponds to the linear span of $p$, $p'$ and $q$. 

So $\dim\, L(W) \le 2$. Hence $\dim\, L(W) = \dim\, W$, so $L(W) = W$ and every
point of $W$ is a vertex of $W$. So $W$ is as in (2) of Theorem \ref{h}
(and case (b) above applies as well). 

Assume next that case (a2) above applies. Take $r \in W^{\circ}$ and $r' \in L_p$, 
such that $L_r \cap L_q = \varnothing$. Take $s \in L_r \cap W^{\circ}$. 
Just as with $r' \in L_p$, there exists an $s' \in L_q$, such that the line
$L_s$ through $s$ and $s'$ is contained in $W$. 

There are infinitely many lines $L_s$, if we range 
$s$ over $L_r \cap W^{\circ}$. Let $U$ be the closure of the lines $L_s$.
Only finitely many lines can be a component of $U$, so there is a line which is not. 
Being irreducible, the line is fully contained in a component of $U$, and this 
component has larger dimension than the line. So $U$ has dimension at least $2$.
Consequently, the intersection of $W$ and the linear span
of $q$, $q'$, $r$ and $r'$ has dimension at least $2$. As $W$ is irreducible
of dimension $2$, $W$ is just this intersection. So $W$ is as in (2) of
Theorem \ref{h}.

\begin{remark}
Self-vanishing systems corresponding to cases (a1), (a2), (b) indeed exist:
\begin{enumerate}

\item[(a1)] $H = (x_4^2,x_4x_5,x_1x_5-x_2x_4,0,0)$,

\item[(a2)] $H = (x_5^2(ax_1-x_5^2x_2),a(ax_1-x_5^2x_2),x_5^2(ax_3-x_5^2x_4),
            a(ax_3-x_5^2x_4),0)$ with $a = x_1x_4-x_2x_3$,

\item[(b)]  $H = (x_5^5,bx_5^3,b^2x_5,-b^2x_1+2bx_2x_5^2-x_3x_5^4,0)$ with 
            $b = x_1x_3-x_2^2+x_4x_5$.

\end{enumerate}
All these examples are counterexamples to the original version of Lemma 
\ref{rank_of_hessian_matrix_corrected}. They were taken from the introduction 
of the second author's paper \cite{hmgqt5}. 
\end{remark}

\begin{remark}
Gordan and Noether only prove that $\dim\, L(W) \le 3$ in Theorem \ref{h} (1).
This makes that the conclusion that $\dim\, L(W) \le 2$ at the end of of the 
proof of Lemma \ref{A} is not sufficient to prove Lemma \ref{A}. 
Gordan and Noether advance as follows. 

On account of $\dim\, L(W) \le 2$, we may assume that $h_1 = h_2 = 0$. If 
$f \in K[x_1,x_2]$, then $h_i \in K[x_1,x_2]$ for all $i$ as well, so 
$\dim\, W = \krdim K[\bm{h}] - 1 = \trdeg_K K(\bm{h}) - 1 \le 1$.
If $f \notin K[x_1,x_2]$, then it follows from Theorem \ref{duality} (c) that
the last three rows of the Jacobian matrix $(\pa h_i/\pa x_j)$ of $\bm{h}$
are dependent. The first two rows are zero, so $\rk (\pa h_i / \pa x_j) \le 2$ 
and $\dim\, W = \krdim K[\bm{h}] - 1 = \rk (\pa h_i / \pa x_j)  - 1 \le 1$.
\end{remark}

\begin{remark} \label{linspanrem}
Theorem \ref{h} is proved in \cite{hmgqt5} by the second author as well, but in a 
different way, because the proof by Gordan and Noether was not fully understood.
The second author only proved that $\dim\, L(W) \le 2$ in \cite{hmgqt5}, because
that was the missing link in \cite{singhess} to obtain Theorem \ref{thm45}.

The focus in on $\dim\, L(W)$ rather than $\dim\, W$ in other papers of 
the second author as well.
In \cite{qtnsh}, only $\dim\, L(W) \le 1$ is used in the classification of all 
homogeneous polynomials with zero Hessian in dimension $4$. 
In \cite{hessmalrank}, all homogeneous polynomials with zero Hessian in 
dimension $6$ are classified, under the assumption that $\dim\, L(W) \le 3$.
Non-homogeneous polynomials are classified under similar assumptions in
\cite{singhess} and \cite{hessmalrank}.
\end{remark}

\section{Section 6 of Gordan and Noether~\cite{GNzzz}} 

We start with formulating a result which, as opposed to 
Theorem \ref{duality}, applies to $\boldh$ which are not constructed as in 
Proposition \ref{system_associated_to_alg_dep_system_is_svs}. 
The result is inspired by Lemma 3.1 of \cite{clossen}.

\begin{proposition} \label{ab}
Let $\boldh = (h_1,h_2,\ldots,h_n)$ be a system of forms of the same degree, 
and let $f \in K[x]$ be a homogeneous polynomial. Put $f_j=\frac{\pa f}{\pa x_j}$
for $j =1,2,\ldots,n$, and let $\boldf=(f_1, \dots, f_n)$
Then the following statements are equivalent:
\begin{enumerate}
\item[{\rm (a)}] 
$\pa \boldh/\pa x_j$ is a syzygy of $\boldf$ for $j=1,2, \ldots, n$.  
\item[{\rm (b)}]
$\boldh$ is a syzygy of $\pa \boldf/\pa x_j$ for $j=1,2, \ldots, n$.
\end{enumerate}
Furthermore, both {\rm (a)} and {\rm (b)} imply that $\boldh$ is a syzygy of 
$\boldf$.
\end{proposition}

\begin{proof}
Using
$$
\frac{\pa \boldh}{\pa x_1} x_1 + \frac{\pa \boldh}{\pa x_2} x_2 + 
\cdots + \frac{\pa \boldh}{\pa x_n} x_n = (\deg\; \boldh) \boldh
$$
we can obtain the last claim from (a). Similarly, the last claim can
be obtained from (b). Having these results, the equivalence of (a) and (b)
follows from the formula 
\[  \frac{\pa}{\pa x_j}(\boldh\cdot \boldf)= \frac{\pa \boldh}{\pa x_j} \cdot \boldf + \boldh \cdot \frac{\pa \boldf}{\pa x_j}=0.\]
for $j = 1, 2, \ldots, n$. This formula appears in the proof of Theorem \ref{duality}.
\end{proof}

Let $f \in K[x]$ be a homogeneous polynomial, and put $f_j = \frac{\pa f}{\pa x_j}$.
Assume that $f_1, f_2, \ldots, f_n$ are algebraically dependent over $K$. Then
by Proposition \ref{basic_2}, 
there exists a system $\boldh = (h_1,h_2,\ldots,h_n)$ of forms of the same degree,
such that Proposition \ref{ab} (b) is satisfied. 

Assume that 
\begin{equation}  \label{splitting_case_2}
h_j(x) \in \sol(i(L(W));R) \ \mbox{ for all } j. 
\end{equation}
Then by a linear change of variables, $\boldh$ coincides with a self-vanishing system described in 
Example~\ref{splitting_case}.
We assume this from now on, so
\begin{itemize}

\item $h_1(x) = \cdots = h_r(x) = 0$, for some $ 1  \leq r < n$, and 

\item $h_j(x)$ is a function only in $x_1, \ldots, x_r$ for $ j > r$.

\end{itemize}
In \S 6 of \cite{GNzzz} and \S 3 of \cite{clossen}, all $f$ which satisfy 
Proposition \ref{ab} (b) are classified for this particular $\boldh$.

Although $\boldh$ is a self-vanishing system which has many properties of 
reduced self-vanishing systems arising from $f$, especially if 
${\rm GCD}(h_1,h_2,\ldots,h_n) = 1$, we will show in Example \ref{is_not_arising} 
below that it is possible for $\boldh$ to satisfy ${\rm GCD}(h_1,h_2,\ldots,h_n) = 1$
and not to be of this form for some $f$ which satisfies Proposition \ref{ab} (b).

But if ${\rm GCD}(h_1,h_2,\ldots,h_n) = 1$ and $\cI(f)$ is a principal ideal, 
then $\boldh$ is indeed a reduced self-vanishing systems of $f$. This is because
there is only one $\boldh$ with ${\rm GCD}(h_1,h_2,\ldots,h_n) = 1$ which 
satisfy Proposition \ref{ab} (a).

\begin{theorem} \label{thm92}
Suppose that the Jacobian $M := (\pa h_i/\pa x_j)$ of $\boldh = (h_1,h_2,\ldots,h_n)$
has rank $k$. Then we can choose $k$ columns of $M$ which generate its columns space,
say with indices $i_1, i_2, \ldots, i_k$. Then $i_1, i_2, \ldots, i_k \in \{1,2,\ldots,r\}$. 

Suppose that $f \in K(x_1,x_2,\ldots,x_r)[x_{r+1},x_{r+2},\ldots,x_n]$.
Then $f$ is a polynomial over $K(x_1,x_2,\ldots,x_r)$ in the $(k+1) \times (k+1)$
minors of 
$$
\left( \begin{array}{cccc}
\parder{h_{r+1}}{x_{i_1}} & \parder{h_{r+2}}{x_{i_1}} & \cdots & \parder{h_n}{x_{i_1}} \\[5pt]
\parder{h_{r+1}}{x_{i_2}} & \parder{h_{r+2}}{x_{i_2}} & \cdots & \parder{h_n}{x_{i_2}} \\[2pt]
\vdots & \vdots & & \vdots \\[2pt]
\parder{h_{r+1}}{x_{i_k}} & \parder{h_{r+2}}{x_{i_k}} & \cdots & \parder{h_n}{x_{i_k}} \\[5pt]
\hline
x_{r+1} & x_{x+2} & \cdots & x_n
\end{array} \right)
$$
if and only if $f$ satisfies Proposition \ref{ab} (b). 
\end{theorem}

\begin{proof}
Let $\tilde{K}$ be the algebraic closure of $K(x_1,x_2,\ldots,x_r)$.
From Theorem \ref{subring_by_generated_by_determinates}, it follows that 
$f$ is a polynomial over $\tilde{K}$ in the above-described minors, 
if and only if $f$ satisfies Proposition \ref{ab} (a). 

Suppose that $f$ is a polynomial over $\tilde{K}$ in the above-described 
minors. Let $\cB$ be a $K(x_1,x_2,\ldots,x_r)$-basis of $\tilde{K}$, 
such that $1 \in \cB$.  Then $\cB$ is also a basis of 
$\tilde{K}[x_{r+1},x_{r+2},\ldots,x_n]$ as a free module over 
$K(x_1,x_2,\ldots,x_r)[x_{r+1},x_{r+2},\ldots,x_n]$. Taking
coefficients of $1 \in \cB$ yields $f$ as a polynomial over 
$K(x_1,x_2,\allowbreak\ldots,x_r)$ in the above-described minors. 
\end{proof}

\begin{remark}
The proof of Theorem \ref{subring_by_generated_by_determinates} tells us that
the $(k+1) \times (k+1)$-minors can be replaced by $n-r-k$ homogeneous 
polynomials $l_1, l_2, \ldots, l_{n-r-k}$ which are linear in 
$x_{r+1}, x_{r+2}, \ldots, x_n$. 

In \S 2.3 of \cite{ciliberto_russo_francesco_simis}, a similar construction is
given as above, but some conditions on that construction makes it incomplete
for classification. This is however not a real problem, because there 
is no classification result in \cite{ciliberto_russo_francesco_simis}.
There is just a definition of so-called GN-polynomials of type $(r,s,\mu,n)$.

Let us call a polynomial $f$ as in Theorem \ref{thm92} a 
\emph{GN-polynomial of type $(n-1,n-1-r,k-1)$}. Then GN-polynomials of type 
$(n-1,n-1-r,k-1,n')$, as defined in \cite{ciliberto_russo_francesco_simis},
are GN-polynomials of type $(n-1,n-1-r,k-1)$ with additional conditions.
\end{remark}

\begin{remark}
Since $\trdeg_K K(h_{r+1},h_{r+2},\ldots,h_r) = k$, there 
exists a transcendence basis $A_1, A_2, \ldots, A_k$ over $K$ of 
$K(h_{r+1},h_{r+2},\ldots,h_r)$.
In \S 6 of \cite{GNzzz}, \S 3 of \cite{clossen}, and \S 2.3 of 
\cite{ciliberto_russo_francesco_simis}, derivatives are taken 
with respect to $A_1, A_2, \ldots, A_k$ instead of 
$x_{i_1}, x_{i_2}, \ldots, x_{i_k}$ in the matrix of Theorem
\ref{thm92}.

For other choices of $A_1, A_2, \ldots, A_k$, there are
problems with the meaning of differentiating $h_j$ with respect to $A_i$. 
To obtain meaning, we choose $A_{k+1}, \allowbreak A_{k+2}, \ldots, A_r$, such that 
$A_1, A_2, \ldots, A_r$ becomes a transcendence basis of $K(x_1,x_2,\ldots,x_r)$.
Now the only condition is that the first $k$ rows of the matrix of Theorem
\ref{thm92} are independent. With this condition, it is possible to take
for $A_1, A_2, \ldots, A_r$ a permutation of $x_1,x_2,\ldots,x_r$, which is
exactly what we do in Theorem \ref{thm92}.
\end{remark}

\begin{example} \label{is_not_arising}
If $\cI(f)$ is not a principal ideal, then is possible for $\boldh$ to satisfy 
Proposition \ref{ab} (b) and ${\rm GCD}(h_1,h_2,\ldots,h_n) = 1$ without being 
a reduced self-vanishing systems arising from $f$. Take for instance
$$
f = x_1^2 x_3 + x_1 x_2 x_4 + x_2^2 x_5 + z_1^2 z_3 + z_1 z_2 z_4 + z_2^2 z_5, 
$$
where $z_1,z_2,z_3,z_4,z_5 = x_6,x_7,x_8,x_9,x_{10}$. 
Then for a reduced self-vanishing system arising from $f$, the degree 
which respect to $x_1,x_2,x_3,x_4,x_5$ has the same parity as the 
degree with respect to $z_1,z_2,z_3,z_4,z_5$. This is however not the case 
for 
$$
\boldh = \big(0,0,(x_1 \cdot x_2^2), (x_1 \cdot -2 x_1 x_2), (x_1 \cdot x_1^2),
0,0,(x_2 \cdot z_2^2), (x_2 \cdot -2 z_1 z_2), (x_2 \cdot z_1^2)\big). 
$$
\end{example}

\end{document}